\documentclass{amsart}
\usepackage{pstricks,pst-node}
\usepackage{amsmath,amssymb,multicol}
\textheight=23cm
\textwidth=13.5cm
\hoffset=-1cm
\parindent=16pt

\usepackage{hyperref}

\newtheorem{Theo}{Theorem}
\newtheorem{Lem}[Theo]{Lemma}
\newtheorem{Cor}[Theo]{Corollary}

\newcommand{\N}{\mathbb{N}}
\newcommand{\Z}{\mathbb{Z}}

\newcommand{\ii}{\imath}

\begin{document}
\baselineskip=17pt
\thanks{We would like to thank the Rechenzentrum Universit\"at Freiburg
for providing computing resources.}

\author[G. Bhowmik]{Gautami Bhowmik}
\address{%
Gautami Bhowmik \\
Universit\'e de Lille 1\\
Laboratoire Paul Painlev\'e\\
U.M.R. CNRS 8524\\
59655 Villeneuve d'Ascq Cedex\\
France  \\
bhowmik@math.univ-lille1.fr
}
\author[I. Halupczok]{Immanuel Halupczok}
\thanks{The second author was supported by the Agence National de la Recherche
(contract ANR-06-BLAN-0183-01) and by the Fondation Sciences math\'ematiques
de Paris.}
\address{%
 Immanuel Halupczok\\
 DMA de l'ENS \\
 UMR 8553 du CNRS \\
 45, rue d'Ulm  \\
 75230 Paris Cedex 05 \\
 France\\
 math@karimmi.de
}

\author[J.-C. Schlage-Puchta]{Jan-Christoph Schlage-Puchta}
\address{%
 Jan-Christoph Schlage-Puchta\\
 Albert-Ludwigs-Universit\"at\\
 Mathematisches Institut\\
 Eckerstr. 1 \\
 79104 Freiburg\\
 Germany \\
 jcp@math.uni-freiburg.de
}

\title[Zero-sum free Sequences]{The structure of maximal zero-sum
  free Sequences}
\maketitle
MSC-Index: 11B50, 11B75, 05D05\\
Keywords: Zero-sum problems, Davenport's constant, Property B

\section{Introduction and Results}
This paper is a continuation of our investigation of zero-sum (free)
sequences of finite abelian groups (see \cite{Montreal} or \cite{trio1}). 
As is the tradition, we
let $G$ be a finite abelian group, $A\subseteq G$ a multiset and we say
that $A$ is zero-sum free if there exists no non-empty subset $B\subseteq A$,
such that $\sum_{b\in B} b=0$. Obviously, in a fixed group $G$ a
zero-sum free subset cannot be arbitrarily large. The least integer $n$
such that there does not exist a zero-sum free set with $n$ elements is
usually called the Davenport's constant of $G$, for which we write
$D(G)$. For an overview of this and related problems as well as
applications see \cite{GHK}. 

Here we consider groups of the form $\Z_n^2$, where
$\Z_n=\Z/n\Z$. Mann and Olson \cite{MO} and Kruswijk \cite{Baa} showed that 
$D(\Z_n^2)=2n-1$. Knowing the precise structure of all
counterexamples, i.e. zero-sum free sets of  $2n-2$ elements 
would simplify some inductive arguments for groups of rank $\geq 3$,
where the Davenport constant is unknown. Up to an automorphism of the
group all known examples of zero-sum free sets of maximal size are one
of the following: Either $(1, 0)$ occurs with multiplicity
$n-1$, and all other points are of the form $(a_i, 1)$, or $(1, 0)$
occurs with multiplicity $n-2$, all other points are of the form
$(a_i, 1)$, and we have $\sum_{i=1}^n a_i=1$. We are thus motivated to study
the following property introduced by Gao and Geroldinger \cite{GaoB}
 
Let $n$ be an integer. Then $n$ is said to satisfy {\em property B},
or $B(n)$ holds true, if in every maximal zero-sum free subset of
$\Z_n^2$ some element occurs with multiplicity at least $n-2$.
It is easy to see that this definition is equivalent to the statement
that every zero-sum free set of $2n-2$ elements is of one of the two
forms cited above.  

Gao and Geroldinger \cite{GaoB} proved that $B(n)$ holds true for
$n\leq 7$, and that for $n\geq 6$, $B(n)$ implies $B(2n)$. Recently,
Gao, Geroldinger and Grynkiewicz \cite{GGG} showed that property B is
almost multiplicative, that is, if $B(n)$ and $B(m)$ hold true, then
so does $B(nm)$, provided that $mn$ is odd and greater than $9$. Hence,
combining the results of \cite{GaoB} and \cite{GGG}
it suffices to prove $B(n)$ when $n$ is prime and when $n \in \{8,9,10\}$.

From now on, $p$ will always be a prime number.
If one tries to prove $B(p)$ by sheer force, the
most difficult cases are those which are close to the known maximal
zero-sums, that is, some point $a$ has multiplicity only slightly less
than $p-2$, and all other points occur in one coset of the subgroup
generated by $a$. Further the method of exponential sums runs into serious
problems with situations in which few points occur with high
multiplicity. Therefore, it appears worthwhile to deal with the case
of high multiplicities in a uniform way. The aim of this article is to
initiate a systematic approach to property B via the highest occurring
multiplicities. 

In one direction we have the following.

\newcommand{\refmm}[1]{Theorem~\ref{thm:MaxMult}~(\ref{mm:#1})}

\begin{Theo}
\label{thm:MaxMult}
Let $A\subseteq\Z_p^2$ be a set of cardinality $2p-2$,
and let $m_1\geq m_2\geq m_3$ be the
largest occurring multiplicities. Suppose that $m_1\leq p-3$, and that
one of the following statements is true:
\begin{enumerate}
\item\label{mm:1max} $m_1=p-3$
\item\label{mm:asymp} $p > N$ and $p - m_1 < cp$,
where $N, c > 0$ are two constants not depending on $p$
\item\label{mm:2max} $m_2\geq 2p/3$
\item\label{mm:3max} $m_1+m_2+m_3\geq 2p-5$
\end{enumerate}
Then $A$ contains a zero-sum.
\end{Theo}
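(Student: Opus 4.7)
The plan is to assume $A$ is zero-sum free and derive a contradiction in each case. Let $a \in \Z_p^2$ be an element of multiplicity $m_1$; after an automorphism of $\Z_p^2$ we may assume $a = (1,0)$, so that $H := \langle a\rangle = \Z_p \times\{0\}$. Write $A$ as $m_1$ copies of $a$ together with a multiset $B$ of size $v := 2p - 2 - m_1$, let $\bar B \subseteq \Z_p$ denote the projection of $B$ onto the second coordinate, and for $B'\subseteq B$ set $\sigma(B') := \sum_{b\in B'} b$ and $\bar\sigma(B')$ for its second coordinate. One checks directly that $A$ is zero-sum free if and only if the set
\[
  J \;:=\; \bigl\{\, j \in \Z_p \,:\, \exists\, \emptyset \neq B' \subseteq B,\ \bar\sigma(B') = 0,\ \sigma(B') = (j, 0)\,\bigr\}
\]
is contained in $\{1, 2, \ldots, p - m_1 - 1\}$; in particular $|J| \leq p - m_1 - 1$. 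The task is to contradict this bound in each of the four cases.

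For the structural cases \refmm{2max} and \refmm{3max}, observe that $v \geq p+1 > D(\Z_p) = p$ already guarantees at least one non-empty $B' \subseteq B$ with $\bar\sigma(B') = 0$; the real job is to produce many such $B'$ whose lifts cover more than $p - m_1 - 1$ first-coordinate residues. In case \refmm{2max}, let $a_2$ denote the element of multiplicity $m_2 \geq 2p/3$. If $a_2 \in H$, Cauchy--Davenport inside $H \cong \Z_p$ applied to the $m_1$ copies of $a$ and $m_2$ copies of $a_2$ already produces a zero-sum, since $m_1 + m_2 \geq 4p/3 > p$. Otherwise $\bar a_2 \neq 0$, and varying the number of copies of $a_2$ used in $B'$ while compensating with elements of $B\setminus\{a_2\text{'s}\}$ yields, via another Cauchy--Davenport estimate, at least $p - m_1$ distinct values of $j$. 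Case \refmm{3max} is analogous: $B$ contains at most three elements outside the copies of $a_2$ and $a_3$, so the configurations of $a, a_2, a_3$ inside $\Z_p^2$ can be enumerated and handled by hand. Case \refmm{1max} is the extremal instance of these arguments: $m_1 = p - 3$, $|B| = p+1$ and $|J| \leq 2$ force $B$ into a rigid form through Vosper-- or Freiman-type inverse results on the equality cases, after which one directly exhibits a zero-sum.

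For the asymptotic case \refmm{asymp}, I would switch to the exponential-sum / Fourier method on $\Z_p$. For each pair $(\chi_1, \chi_2)$ of additive characters of $\Z_p$, the product $\prod_{b \in B}\bigl(1 + \chi_1(\bar b)\chi_2(b_1)\bigr)$ enumerates the subsets of $B$ weighted by $\chi_1(\bar\sigma(B'))\chi_2(\sigma(B')_1)$; orthogonality expresses the number of $B'$ realizing each $j \in J$ as an average of such products, whose non-principal terms are bounded by products of cosines exponentially small in $p$ as long as $v$ is comparable to $p$ and the elements of $B$ are suitably spread. For $c$ sufficiently small and $p > N$ large, this forces $|J|$ to be close to $p$, contradicting $|J| \leq p - m_1 - 1 < cp$. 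I expect the main obstacle to be case \refmm{1max}: the slack $|J| \leq 2$ is so small that sharp inequalities do not suffice, and one is forced to use inverse theorems to classify equality cases --- a classification that typically branches into several subcases, each of which has to be eliminated explicitly.
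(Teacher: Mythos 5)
Your opening reduction is sound and is indeed the engine behind much of the paper's work: subset sums of $B$ landing in $\langle(1,0)\rangle$ can be completed by copies of $(1,0)$ unless their first coordinate lies in $[1,p-m_1-1]$, so zero-sum freeness forces $|J|\le p-m_1-1$. But in every one of the four cases the step that actually contradicts this bound is missing. For $m_2\ge 2p/3$, ``varying the number of copies of $a_2$ and compensating with $B\setminus\{a_2\}$ via another Cauchy--Davenport estimate'' is not an argument: writing $k_i=p-m_i$, the multiset $B$ consists of $m_2$ copies of $a_2$ plus only $k_1+k_2-2$ further elements, and whether their subset sums produce $k_1$ distinct admissible first coordinates depends delicately on where those few elements sit. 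The paper needs an entire section for this: an induction on $k_1+k_2$, lemmas confining all remaining elements to boxes of size roughly $(k_1-2)\times(k_2-2)$ (Lemmas~\ref{Lem:2n-9locate}--\ref{Lem:Bexists}), and a computer verification of the base cases $k_1+k_2\le 14$ (Theorem~\ref{thm:MaxComp}). For $m_1+m_2+m_3\ge 2p-5$ the positions of $a_2,a_3$ range over all of $\Z_p^2$, so ``enumerate the configurations by hand'' is not a finite task; the paper has to prove quantitative restrictions on $a_3=(x,y)$ (e.g.\ $y\ge (p-4)/6$, then $y>3p/10$ via the obstructing-integer analysis) and still uses the computer for $p\le 37$. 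For $m_1=p-3$ the condition $|J|\le 2$ is not a sumset-equality statement of the kind Vosper's or Freiman's theorems classify, and you give no derivation of the ``rigid form''; the paper instead combines the bound on $m_2$ from the previous case with Olson-type lower bounds for $|\Sigma(\cdot)|$ (including the computer-verified Lemma~\ref{Lem:OlsonFmc}), the covering Lemma~\ref{Lem:CompAppl}, and the computed cases $p\le 23$.

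The most serious gap is the asymptotic case. Your character-sum bound requires the elements of $B$ to be ``suitably spread'', but that hypothesis fails exactly in the situations this case is about: $B$ may contain a single element with multiplicity $m_2$ as large as $2p/3$ (and, near the extremal examples, almost all of $B$ lies on one line), and then $\prod_{b\in B}\bigl(1+\chi_1(\bar b)\chi_2(b_1)\bigr)$ contains a factor of modulus $|1+\chi(t)|^{m_2}$, which for characters of small argument is comparable to $2^{m_2}$; the non-principal terms are not exponentially small and the main term does not dominate. The paper states this obstruction explicitly (``the method of exponential sums runs into serious problems \dots with high multiplicity'') and takes a different route: it first proves Theorem~\ref{thm:MinMult} via the Alon--Dubiner covering result (Lemma~\ref{Lem:AD}) and Lemma~\ref{Lem:GGmult}, which forces a second multiplicity $m_2>8(p-m_1)$, and only then finishes by a combinatorial induction on the configuration (Lemmas~\ref{Lem:horizPairs}, \ref{Lem:muSmall}) together with Lemma~\ref{Lem:CompAppl} and the already-proved case $m_2\ge 2p/3$. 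Without an argument for the unspread situation, your treatment of case (2) does not close, and the other three cases are programmatic sketches rather than proofs.
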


Lettl and Schmid \cite{LS} proved the existence of a zero-sum under the
fourth condition with $2p-5$ replaced by $2p-2$. Our proof of the
fourth statement does not involve any new ideas. However, using the
first and the third condition we immediately obtain a good lower
bound for $m_3$ which greatly simplifies our arguments. With more
effort one can replace $2p-5$ by some other function of the form
$2p-c$, however, we do not feel that the amount of work necessary to
do so would be justified. The fourth statement appears to be rather
technical, the reason that we still believe it
to be of some interest is the fact that when one tries to tackle larger
group by an inductive argument along the lines of \cite{Montreal}, one
is automatically lead to situations where $m_1+m_2+m_3$ is close to $2p-2$.

In the opposite direction we combine exponential sums with
combinatorial methods to prove the following.

\begin{Theo}
\label{thm:MinMult}
There is a positive constant $\delta$, such that each set
$A\subseteq\Z_p^2$ with $|A|=2p-2$ and $m_2<\delta p$ contains a zero-sum.
\end{Theo}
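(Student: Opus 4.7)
The approach combines Fourier inversion on $G=\Z_p^2$ with combinatorial control of the structure of $A$. By inversion, the number $r_A(0)$ of subsets of $A$ summing to $0$ (including the empty set) satisfies
\[
p^2 r_A(0) = \sum_{\chi}\prod_{a\in A}(1+\chi(a)),
\]
the trivial character contributing the main term $2^{2p-2}$. It therefore suffices to bound $\bigl|\sum_{\chi\neq 1}\prod_a(1+\chi(a))\bigr|<2^{2p-2}-2p^2$; then $r_A(0)\geq 2$ and $A$ contains a non-empty zero-sum. Let $a_0\in A$ have maximal multiplicity $m_1$ and set $B=A\setminus\{a_0^{m_1}\}$, so every element of $B$ has multiplicity at most $m_2<\delta p$. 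We may assume $a_0$ has order $p$, otherwise $\{0\}$ is a zero-sum.

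If $m_1<\delta p$ then every multiplicity in $A$ is below $\delta p$; for every non-trivial $\chi$ the multiset $\{\chi(a):a\in A\}\subseteq\Z_p$ is spread out, no residue carrying more than $\delta p$ of the mass. A standard Fourier estimate (bounding $\log|1+e^{2\pi it/p}|$ by a negative quadratic in $t/p$ near $0$ and summing against the mass bound) then gives $\prod_a|1+\chi(a)|\leq 2^{|A|}e^{-\eta p}$ uniformly in $\chi$, which closes the argument. The substantive case is therefore $m_1\geq \delta p$, in which the non-trivial characters are split into family A, the $p-1$ characters trivial on $\langle a_0\rangle$, and family B, the $p^2-p$ characters with $\chi(a_0)\neq 0$. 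Family B is handled by combining the decay $|1+\chi(a_0)|^{m_1}=(2\cos(\pi\chi(a_0)/p))^{m_1}$ with a Gauss-sum average over the $p-1$ values of $\chi(a_0)$ in each direction of $\widehat G$; the resulting polynomial loss is absorbed by the exponential decay of $\prod_{b\in B}|1+\chi(b)|$, which is at our disposal because $B$ still has $\geq p+1$ elements of multiplicity $<\delta p$.

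Family A is the heart of the matter. Because $\chi$ factors through $G/\langle a_0\rangle\cong\Z_p$,
\[
\prod_{a\in A}(1+\chi(a)) = 2^{m_1}\prod_{b\in B}(1+\widetilde\chi(\bar b)),
\]
where $\bar b$ is the image of $b$ and $\widetilde\chi$ is the induced non-trivial character on $\Z_p$. The hypothesis $m_2<\delta p$ alone does not prevent the projected multiset $\bar B\subseteq\Z_p$ from concentrating on a single residue, since an entire coset of $\langle a_0\rangle$ may project to one point with each $A$-multiplicity equal to $1$. One must therefore exploit zero-sum-freeness of $A$ directly: if $\bar B$ concentrated on some $g\neq 0$, then $B$ would lie in the single coset $g+\langle a_0\rangle$, and a subset-sum argument in that coset using Davenport's constant $D(\Z_p)=p$ together with the admissible lift corrections $-ka_0$, $0\leq k\leq m_1$, would produce a zero-sum of $A$, a contradiction. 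The resulting anti-concentration of $\bar B$, combined with a sharp one-dimensional Fourier estimate over $\Z_p$, yields $\prod_{b\in B}|1+\widetilde\chi(\bar b)|\leq 2^{|B|}e^{-\eta|B|}$ for some $\eta=\eta(\delta)>0$, and summing over the $p-1$ members of family A gives a total contribution $O\bigl(p\cdot 2^{|A|}e^{-\eta|B|}\bigr)=o(2^{2p-2})$.

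The decisive obstacle is family A in the transition window where $m_1$ is close to $p-3$: then $|B|\approx p+1$ is barely large enough to apply Davenport's constant inside a single coset, the combinatorial anti-concentration of $\bar B$ is at its weakest, and the factor $2^{m_1}$ retained from the bulk leaves minimal slack for the Fourier decay $e^{-\eta|B|}$ to absorb. Choosing $\delta$ small enough that the anti-concentration of $\bar B$ and the one-dimensional Fourier estimate jointly overcome $2^{m_1}$ in this narrow regime is the technical core of the proof.
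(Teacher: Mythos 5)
Your plan bounds the non-principal contribution termwise in absolute value, and that is precisely what cannot succeed in the regime you yourself single out as decisive. Consider $A=\{(1,0)^{p-3},(a_1,1),\dots,(a_{p+1},1)\}$ with the $a_i$ distinct, so $m_1=p-3$ and $m_2=1<\delta p$. A non-empty zero-sum subset must use exactly $p$ of the elements $(a_i,1)$, and each of the $p+1$ ways of omitting one of them can be completed by copies of $(1,0)$ in at most one way; hence $r_A(0)\le p+2$ and
\[
\sum_{\chi\neq 1}\Bigl|\prod_{a\in A}(1+\chi(a))\Bigr|\;\ge\;\Bigl|\sum_{\chi\neq 1}\prod_{a\in A}(1+\chi(a))\Bigr|\;=\;2^{2p-2}-p^2r_A(0)\;=\;2^{2p-2}-O(p^3).
\]
So for sets satisfying the hypotheses the total mass of the non-trivial terms can be as large as the main term: concretely, for your family A characters with small frequency $t$ one has $\prod_{b\in B}|1+\widetilde\chi(\bar b)|=(2\cos(\pi t/p))^{p+1}\approx 2^{|B|}$, and no bound of the shape $2^{|B|}e^{-\eta|B|}$ is available. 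To rescue the argument you would have to show that a zero-sum free $A$ with $m_1\le p-3$ cannot have $B$ even \emph{essentially} contained in one coset of $\langle a_0\rangle$ (having $p-O(1)$ elements of $B$ in one coset already kills the decay for the few characters with $t\bar b$ small). Your proposed reason --- $D(\Z_p)=p$ plus lift corrections --- is false as stated: the genuinely extremal zero-sum free sets ($m_1=p-1$ or $p-2$, all remaining elements of the form $(a_i,1)$) are exactly coset-concentrated, so concentration alone produces no contradiction. (Relatedly, the statement must be read with the standing assumption $m_1\le p-3$; as literally quoted it is false, and your write-up never invokes that assumption.) Even granting $m_1\le p-3$, full concentration requires Lemma~\ref{Lem:coset} rather than a direct Davenport-constant count, and the near-concentration case with $m_1$ close to $p$ is essentially the assertion of property B in that range, i.e.\ comparable in difficulty to the theorem itself. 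Your final paragraph names this window as ``the technical core'' but supplies no argument for it; by the computation above, no argument of this first-moment/absolute-value type can close it, because there the zero-sums (when they exist) are only polynomial in number and must be exhibited structurally rather than counted.

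A smaller instance of the same issue already affects your case $m_1<\delta p$: for a non-trivial $\chi$ the mass of $\{\chi(a):a\in A\}$ at a fixed root of unity equals the number of points of $A$ on a coset of $\ker\chi$, which is not controlled by the multiplicities, so the uniform estimate $\prod_a|1+\chi(a)|\le 2^{|A|}e^{-\eta p}$ is unjustified without a separate treatment of line concentration. The paper's proof is organized around exactly this dichotomy: Theorem~\ref{thm:pMult}, proved via the Alon--Dubiner-type expansion Lemma~\ref{Lem:AD} together with Lemma~\ref{Lem:GGmult} and Lemma~\ref{Lem:ksubsets}, handles the case without a rich line and reduces the theorem to $m_1>0.9p$; in that range a rich line is exploited to cover a full line by bounded subset sums, and zero-sums are then constructed directly by a pair-shifting argument (Lemma~\ref{Lem:2n-9Diff}), the final contradiction using $p-m_1\ge 3$. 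If you want to pursue a Fourier-counting proof, you would at minimum need to import that combinatorial machinery to dispose of the concentrated and near-concentrated configurations before any character-sum estimate can take over.
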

Gao, Geroldinger and Schmid \cite[Theorem~4.1]{GGS} had already shown
the existence of a zero-sum under the assumption $m_1<p^{1/4-\epsilon}$.

We did not try to obtain a good numerical bound for $c$, a rather careless
estimate shows that $c=4\cdot 10^{-7}$ is admissible, which is
certainly far from  optimal. However, any value of $c$ less than $0.1$
would be of little help concerning the computational confirmation of
property B, nor do we expect much structural information for maximal
zero-sum free sets from such a small value, therefore we did not try
to optimise our estimate.

\begin{figure}
\begingroup
\psset{unit=2.5mm,dimen=middle,linewidth=0.1mm,hatchwidth=0.1mm}
\SpecialCoor
\makeatletter
\def\comm(#1){\@ifnextchar({\comm@(#1)}{\comm@(#1)(#1 exch pop -2 exch)}}
\def\commr(#1){\@ifnextchar({\comm@r(#1)}{\comm@r(#1)(#1 exch pop 14 exch)}}
\def\comm@(#1)(#2)#3{\psline(! #1)(! #2)\rput[r](! #2 exch 0.1 sub exch){#3\strut}}
\def\comm@r(#1)(#2)#3{\psline(! #1)(! #2)\rput[l](! #2 exch 0.1 add exch){#3\strut}}
\makeatother
\begin{pspicture}(-5,-2)(17,13)
{
  \psset{linewidth=0.2mm}
  \psline{->}(0,0)(14,0)\rput[l](14.5,0){{\small $m_1$}}
  \psline{->}(0,0)(0,12)\rput[b](0,12.5){{\small $m_2$}}
  \psline(0,0)(12,12)
}
{
  \psset{fillstyle=hlines,hatchsep=.6mm}
  \pspolygon(11,1)(12,1)(12,8)(11,8)
  \pspolygon(0,0)(12,0)(12,1)(1,1)
  \pspolygon(0,0)(12,0)(12,1)(1,1)
  \pspolygon(8,8)(12,8)(12,12)
}
\psline(12,1)(12.3,1)\rput[l](12.5,1){{\footnotesize $cp$}}
\psline(12,8)(12.3,8)\rput[l](12.5,8){{\footnotesize $\frac{2}{3}p$}}
\psline(12,12)(12.3,12)\rput[l](12.5,12){{\footnotesize $p$}}
\psline(11,0)(11,-.3)\rput[t](11,-.5){{\footnotesize $(1-C)p$}}

\comm(2 .5){{\scriptsize Theorem~\ref{thm:MinMult}}}
\commr(11.5 5){{\scriptsize \refmm{asymp}}}
\commr(11 9.5){{\scriptsize \refmm{2max}}}
\end{pspicture}
\endgroup
\caption{Property B is proven if $p$ is sufficiently big and
$(m_1,m_2)$ lies in the hatched area; $c$ and $C$ are two constants
not depending on $p$.}
\label{fig:m1m2}
\end{figure}

\medskip

For several of our results, the proof gets more and more complicated
as $p$ becomes small. Thus, to simplify the manual parts of the proof,
we verified as many cases as possible by brute force using a
computer. We also tried how far we could get proving property B
completely by computer. In particular, we also considered the missing non-prime
cases 8, 9 and 10. The following Theorem summarizes the results obtained this
way.
\newcommand{\refmc}[1]{Theorem~\ref{thm:MaxComp}~(\ref{mc:#1})}

\begin{Theo}
\label{thm:MaxComp}
Let $A\subseteq\Z_p^2$ be a set, and let $m_1\geq m_2\geq m_3$ be the
largest occurring multiplicities. Suppose that $m_1\leq p-3$, and that
one of the following statements is true:
\begin{enumerate}
\item\label{mc:all}
$p \le 23$
\item\label{mc:2max} $m_2\ge 2p/3$ and $m_1+m_2\leq 2p-14$
\item\label{mc:3max}
$p \le 37$ and $m_1 + m_2 + m_3 \ge 2p - 5$.
\end{enumerate}
Then $A$ contains a zero-sum. Moreover, property B holds true for 8,
9, and 10.
\end{Theo}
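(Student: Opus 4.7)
The plan is to establish every clause by a computer-assisted exhaustive search, exploiting the $GL_2(\Z_n)$-action on $\Z_n^2$ (which preserves both the cardinality of a multiset and the property of being zero-sum free) to reduce the enumeration to a manageable size. The common kernel of the algorithm is an incremental subset-sum test: starting from the empty multiset one adds distinct points with prescribed multiplicities, and at every step asks whether the current partial multiset already contains a non-empty subset summing to $0$; as soon as it does, the branch is abandoned. This zero-sum detector can be updated in $O(n^2)$ operations per added point by maintaining the set of achievable subset sums in $\Z_n^2$ as a dynamic bit-table, which makes the per-node cost at the search leaves essentially free.

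For clause~(\ref{mc:all}) I would loop over all primes $p \le 23$, then over all integer partitions $(m_1, m_2, \dots)$ of $2p-2$ with $m_1 \le p-3$, and finally over the coordinates of the distinct points. Normalising the most frequent point to $(1,0)$ and, using the stabiliser of $(1,0)$ in $GL_2(\Z_p)$, the second-most frequent to $(\alpha,1)$ for some $\alpha \in \Z_p$, already removes a factor of order $p^3$; combined with aggressive pruning via the subset-sum test, the tree shrinks dramatically. Clauses~(\ref{mc:2max}) and~(\ref{mc:3max}) are treated by the same routine with extra structural constraints that keep the enumeration very small: in~(\ref{mc:2max}) two points carry at least $2p/3$ copies each, so only a bounded number of distinct low-multiplicity points remain; the extra margin $m_1+m_2 \le 2p-14$ guarantees that these low-multiplicity points lie in a range where our subset-sum detector prunes efficiently. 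In~(\ref{mc:3max}) the inequality forces at most three \emph{further} elements beyond the three top-multiplicity points, so that, after $GL_2$-normalisation of the top three, the residual search has only $O(p^k)$ leaves for a small $k$, easily manageable up to $p = 37$.

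For the property~B statement in the composite cases $n \in \{8, 9, 10\}$, the task is not merely to exhibit one zero-sum per candidate but to enumerate \emph{every} zero-sum free multiset of cardinality $2n-2$ and to verify that each contains a point of multiplicity at least $n-2$. Since $|\Z_n^2| \in \{64, 81, 100\}$, the normalised backtracking search is comfortably feasible; one simply records those leaves that survive the zero-sum pruning and checks the multiplicity condition on each. Note that $GL_2(\Z_n)$ still acts faithfully for composite $n$, so the normalisation carries over verbatim.

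The only real obstacle is engineering: keeping the branching tree within reach of our machines, and convincing oneself that every reachable leaf has indeed been inspected. This is controlled by (i) ordering the enumeration so that points expected to force a zero-sum are placed first (so that dead branches die near the root), (ii) the $GL_2$-symmetry reduction at the top two levels, and (iii) the dynamic subset-sum table, which makes pruning cheap. No genuinely new mathematical idea is needed beyond what has already been developed for Theorems~\ref{thm:MaxMult} and~\ref{thm:MinMult}; the theorem is in its nature a record that a finite verification, carried out on the Freiburg computing cluster acknowledged at the head of the paper, has terminated successfully.
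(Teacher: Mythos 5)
Your treatment of clauses~(\ref{mc:all}) and~(\ref{mc:3max}) and of the cases $n=8,9,10$ is essentially the paper's: a backtracking enumeration with a dynamically maintained sumset and symmetry normalisation (the paper adds further pruning via Lemma~\ref{Lem:Gaop} and Lemma~\ref{Lem:algoHelp}, but that only affects running time). The genuine gap is clause~(\ref{mc:2max}). That clause contains no bound on $p$: it is a statement about \emph{all} primes with $m_2\ge 2p/3$, and it is used in the proof of Theorem~\ref{thm:MaxMult} precisely for arbitrarily large $p$. Your proposal treats it "by the same routine with extra structural constraints", i.e.\ as another finite search, but you never explain how the infinitely many values of $p$ are reduced to a finite check; as stated, the enumeration never terminates. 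The paper's proof of this clause is not a routine variant of the other searches: it first invokes the structural results of Section~\ref{sect:2MaxMult} (Lemmas~\ref{Lem:2n-9locate}, \ref{Lem:stairs}, \ref{Lem:symmetry}, summarised in Figure~\ref{fig:BCD}) to confine all elements of $A$ other than $(1,0)^{m_1},(0,1)^{m_2}$ to a region $L\subset\Z^2$ depending only on $k_1=p-m_1$ and $k_2=p-m_2$, and then observes that for a fixed candidate $A'\subset L$ of size $\ell=k_1+k_2-2$ the zero-sum conditions $\pi_i(s)\in[k_i,p]$ become independent of $p$ once $p-k_i\ge \ell(k_i-2)$, so that only finitely many $p$ (up to $\max_i\bigl(k_i+\ell(k_i-2)\bigr)$) need to be examined. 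Without this finiteness reduction the clause is simply not provable by computer, so this missing idea is fatal to your argument for~(\ref{mc:2max}).

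A second, smaller error concerns the composite cases: you claim the $GL_2(\Z_n)$-normalisation "carries over verbatim" because the action is faithful, but faithfulness is irrelevant; what fails is transitivity. For composite $n$ the orbits of $GL_2(\Z_n)$ on $\Z_n^2\setminus\{0\}$ are distinguished by the order of the element, so the most frequent point cannot in general be normalised to $(1,0)$ (it may have order a proper divisor of $n$), and Lemma~\ref{Lem:Gaop} is likewise unavailable. The paper instead normalises $a_1$ to a point with $\pi_1(a_1)\mid n$, $\pi_2(a_1)=0$, and $a_2$ with $\pi_2(a_2)\mid n$, $\pi_1(a_2)\in[0,\pi_2(a_2)-1]$, and loops over all such normalised pairs. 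Your verification for $n=8,9,10$ as described would silently skip configurations whose top-multiplicity elements have non-maximal order, so it would not actually establish property~B there; this is fixable, but not "verbatim".
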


Part (2) and (3) do not have any merit in itself, but serve only as an
aide in the proof of Theorem~\ref{thm:MaxMult}. 

In view of the multiplicativity results of 
\cite{GaoB} and \cite{GGG}, they yield:

\begin{Cor}
Any $n \le 28$ has property B.
\end{Cor}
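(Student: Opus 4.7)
The plan is a straightforward case-by-case bookkeeping that combines the base results now in hand with the two known multiplicativity principles for property B. The ingredients are: (i) $B(n)$ for $n\le 7$, established by Gao and Geroldinger in \cite{GaoB}; (ii) $B(8)$, $B(9)$, $B(10)$, supplied by the ``moreover'' part of Theorem~\ref{thm:MaxComp}; (iii) the doubling implication $B(n)\Rightarrow B(2n)$ valid for $n\ge 6$ from \cite{GaoB}; and (iv) the odd multiplicativity from \cite{GGG}, which says that $B(m)$ and $B(n)$ together imply $B(mn)$ whenever $mn$ is odd and $>9$.

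First I would handle the remaining primes $p\in\{11,13,17,19,23\}$. Let $A\subseteq\Z_p^2$ be a zero-sum free set of maximal cardinality $2p-2$, and let $m_1$ denote its top multiplicity. If $m_1\ge p-2$, the elementary structural classification recalled in the introduction forces $A$ into one of the two standard shapes, so property B holds for $A$. Otherwise $m_1\le p-3$; then \refmc{all} produces a zero-sum subset of $A$, contradicting the assumption that $A$ is zero-sum free. Thus the second alternative never occurs, and $B(p)$ is established for each of these five primes.

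Next I would walk through the remaining composite $n\le 28$. The even values $n\in\{12,14,16,18,20,22,24,26,28\}$ all have the form $n=2m$ with $m\in\{6,7,8,9,10,11,12,13,14\}$ and $m\ge 6$; in every case $B(m)$ is already in hand (for $m=12$ and $m=14$ after the immediately preceding doubling from $B(6)$ and $B(7)$, and for $m=11,13$ after step two), so the doubling rule from \cite{GaoB} yields $B(n)$. The only composites left over are $15=3\cdot 5$, $21=3\cdot 7$, $25=5\cdot 5$ and $27=3\cdot 9$: each is a product of two odd factors for which property B has already been proved, and each product is odd and exceeds $9$, so the odd multiplicativity of \cite{GGG} applies and delivers $B(n)$.

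The main obstacle is nothing more than verifying that the above list covers every $n\le 28$, which it does. The only step that uses anything beyond bookkeeping is the treatment of the primes $11\le p\le 23$, where the condition ``$m_1\le p-3$'' is precisely the situation ruled out by \refmc{all}; thus Theorem~\ref{thm:MaxComp} fills exactly the gap left by the trivial structural classification of maximal zero-sum free sets with a single high-multiplicity element, and the corollary follows.
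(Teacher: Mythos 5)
Your proof is correct and follows essentially the same route as the paper, which derives the corollary by combining Theorem~\ref{thm:MaxComp} (primes $p\le 23$ via part~(1) plus the cases $8,9,10$) with the multiplicativity results of \cite{GaoB} and \cite{GGG}; you merely spell out the case-by-case bookkeeping that the paper leaves implicit.
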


The remainder of this article is organized as follows.
In the next section, we list some general lemmas which we will need
later. In Sections~\ref{sect:2MaxMult} to \ref{sect:MinMult}, we prove the different statements
of Theorems~\ref{thm:MaxMult} and \ref{thm:MinMult}, approximately
in the order in which they rely upon each other. Finally, in
Section~\ref{sect:algo} we describe the algorithm used for
Theorem~\ref{thm:MaxComp}.

The following diagram describes the dependencies; $\Rnode{A}{A} \qquad \Rnode{B}{B}$
means that $A$ is used in the proof of $B$.
\psset{nodesep=.5ex}
\ncline{->}{A}{B}

\begin{center}
\begin{tabular}{c@{}c@{}c}
&            \Rnode{MinMult}{Theorem~\ref{thm:MinMult}} & \\[3ex]
&            \Rnode{asymp}{\refmm{asymp}} &               \\[3ex]
\Rnode{2max}{\refmm{2max}} & & \Rnode{1max}{\refmm{1max}} \\[3ex]
&            \Rnode{3max}{\refmm{3max}} &                 \\
\Rnode{c2max}{\refmc{2max}} & & \Rnode{c1max}{\refmc{all}}\\
&            \Rnode{c3max}{\refmc{3max}} &
\end{tabular}
\ncline{->}{2max}{asymp}\ncline{->}{2max}{3max}\ncline{->}{2max}{1max}
\ncline{->}{1max}{3max}\ncline{->}{1max}{asymp}
\ncline{->}{c3max}{3max}\ncline{->}{c1max}{1max}
\ncline{->}{c2max}{2max}
\ncline{->}{MinMult}{asymp}
\end{center}

Note that apart from Theorem~\ref{thm:MaxComp}, there is a second place
where computer results are used: Lemma~\ref{Lem:OlsonFmc} below has been
proven using a computer, and this lemma is used in the proof of \refmm{1max}.
For $p$ sufficiently big, it can be replaced by Lemma~\ref{Lem:Olson}.
However, even for arbitrarily big $p$, \refmc{2max} is needed for
\refmm{2max}; thus apart of Theorem~\ref{thm:MinMult}, all our results
depend on the computer even for big $p$.


\section{Auxiliary results}

$\Z_p$ is not an ordered group; however, for our purpose it is useful
to view elements such as 5 and 6 to be close together, and elements
such as 2 to be small. Of course, this notion does not make sense from
a group-theoretic point of view, since $\mathrm{Aut}(\Z_p)$ acts
transitively on $\Z_p\setminus\{0\}$. However, after fixing the generator
$1$, it makes sense to talk about the distance and the size of elements
in $\Z_p$. To be precise, we define two functions $\Z_p\rightarrow\Z$
as follows. For an element 
$a\in\Z_p$ denote by $|a| = \min\{|a'| : a' \in \Z, a' \bmod p = a\}$
the modulus of the least absolute remainder of $a$, and by $\ii(a) =
\min\{a' \ge 0 : a' \bmod p = a\}$ the least positive remainder of
$a$. When we compare elements of $\Z_p$, then we implicitly
apply $\ii$ before. For example for elements $a,b\in\Z_p$,
we write $a < b$ to mean $\ii(a) < \ii(b)$ and $a\in[x,
2x]$ to mean $\ii(a)\in[x, 2x]$. However, at some places it is
important to distinguish between $\sum_{a\in A}\ii(a)$ and
$\ii\big(\sum_{a\in A} a\big)$.

For a multiset $A$ we denote by $\Sigma(A)$ the {\em set} (not
multiset) of all subset sums of $A$, for example, $\Sigma(\{1,
1\})=\{0, 1, 2\}$, and $\Sigma_k(A)$ is the {\em set} of all subset
sums of $A$ of length $k$, for example, $\Sigma_2(\{1, 1, 2\})=\{2, 3\}$.
\begin{Lem}
\label{Lem:Onedim}
\begin{enumerate}
\item Let $A\subseteq\Z_p$ be a multiset of size $k$ without zero-sums. Then
  there are at least $k$ distinct elements representable as non-empty
  subset sums of 
  $A$, and equality holds true if and only if all elements in $A$ are
  equal.
\item Let $A\subseteq\Z_p$ be a multiset of size $p+k$ with $0\leq k\leq p-2$ without
zero-sums of length $p$. There are at least $k+1$ distinct sums of $p$
elements in $A$, and equality holds if and only if $|A|=p$ or $A$ contains
only two distinct elements.
\end{enumerate}
\end{Lem}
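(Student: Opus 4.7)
For part~(1), I would argue by induction on $k = |A|$. The case $k = 1$ is immediate: $|\Sigma(A)| = 1$ and $A$ is trivially constant. For $k \geq 2$, pick any $a \in A$ and let $A' = A \setminus \{a\}$, which is still zero-sum free of size $k - 1$; the inductive hypothesis gives $|\Sigma(A')| \geq k - 1$. Setting $T := \{0\} \cup \Sigma(A')$, one checks
\[
\Sigma(A) = (T \setminus \{0\}) \cup (a + T).
\]
Zero-sum-freeness of $A$ forces $a \neq 0$ and $-a \notin \Sigma(A')$, so $0 \in T \setminus (a + T)$; moreover $|T| \leq |A| \leq p - 1$ together with $\langle a \rangle = \Z_p$ prevents $T$ from being invariant under translation by $a$, yielding $|T \cup (a + T)| \geq |T| + 1$. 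Subtracting the single element $0$ gives $|\Sigma(A)| \geq k$.

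For the equality case in~(1), $|T| = k$ forces $A'$ constant by induction, say $A' = \{b, b, \dots, b\}$, and $T = \{0, b, 2b, \dots, (k-1)b\}$ is an arithmetic progression. The overlap condition $|T \cup (a+T)| = k + 1$ then forces $a \in \{b, -b\}$ by comparing the two APs. Since $a = -b$ would yield the zero-sum $a + b = 0$, we conclude $a = b$ and $A$ is constant.

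For part~(2), I would first invoke the duality $\Sigma_p(A) = \sum(A) - \Sigma_k(A)$, coming from the bijection $B \leftrightarrow A \setminus B$ between sub-multisets of $A$ of sizes $p$ and $k$. This gives $|\Sigma_p(A)| = |\Sigma_k(A)|$, and the hypothesis translates into $\sum(A) \notin \Sigma_k(A)$. I then argue by induction on $k$: the base $k = 0$ gives $|\Sigma_0(A)| = 1$. For the step, pick $a \in A$ of maximal multiplicity, set $A' := A \setminus \{a\}$, and use
\[
\Sigma_k(A) = \Sigma_k(A') \cup \bigl(a + \Sigma_{k-1}(A')\bigr).
\]
If $A'$ still has no zero-sum of length $p$, the inductive hypothesis yields $|\Sigma_{k-1}(A')| \geq k$, and a Cauchy--Davenport-style overlap estimate supplies the extra element needed for $|\Sigma_k(A)| \geq k + 1$. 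The complementary case (where removing $a$ creates a $p$-zero-sum) is structurally very rigid and must be treated by direct inspection.

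The main obstacle is the equality characterisation in~(2). The ``if'' direction is a short computation: if $|A| = p$ then $\Sigma_p(A) = \{\sum A\}$ has $1 = k+1$ elements, and if $A$ has exactly two values $u, v$ with multiplicities $m_u, m_v \leq p$ (the bound forced by the hypothesis), then the $p$-sums take the form $i(u - v)$ with $i$ running through a contiguous window of length $m_u + m_v - p + 1 = k + 1$, all distinct modulo $p$. The ``only if'' direction requires tracking when the inductive step in~(2) is tight; each tight step invokes the AP characterisation from part~(1), ultimately forcing $A$ to contain at most two distinct values.
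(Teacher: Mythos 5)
Your part (1) is essentially the paper's own argument: induction on $k$, with the key point that $T=\{0\}\cup\Sigma(A')$ cannot be invariant under translation by $a$, since that would put the whole subgroup $\langle a\rangle=\Z_p$, and in particular $-a$, inside $T$, contradicting zero-sum freeness. Two small repairs are needed. The bound $|T|\le p-1$ does not follow from ``$|T|\le|A|$'' (which is false in general); it follows from $-a\notin T$, which you had already established in the same sentence. And in the equality analysis, the claim that $|T\cup(a+T)|=k+1$ forces $a\in\{b,-b\}$ by comparing the two progressions fails in the wrap-around case $k=p-1$: there $T=\Z_p\setminus\{-b\}$ and the union is all of $\Z_p$ for every $a\neq 0$. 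In that case one must argue instead that zero-sum freeness of $\{b^{p-2},a\}$ itself forces $a=b$; this is easy, but it is not the argument you gave.

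Part (2) is where the genuine gap lies. The paper does not prove this statement; it quotes it as a theorem of Bollob\'as and Leader, and your sketch does not replace that citation. First, the ``complementary case (where removing $a$ creates a $p$-zero-sum)'' cannot occur: every sub-multiset of $A'=A\setminus\{a\}$ is a sub-multiset of $A$, so $A'$ automatically inherits the hypothesis; presenting this as a case needing separate inspection indicates the induction has not been carried through. Second, and more seriously, the step you describe as ``a Cauchy--Davenport-style overlap estimate supplies the extra element'' is exactly the crux and is not a sumset estimate at all: $\Sigma_k(A')\cup\bigl(a+\Sigma_{k-1}(A')\bigr)$ is a union, and nothing in Cauchy--Davenport forces it to be larger than its bigger constituent. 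What would actually make the step work is the equality clause of the inductive hypothesis: if $|\Sigma_{k-1}(A')|=k$ exactly, then (for $k\ge 2$, via your duality) $A'$ takes only two distinct values $u,v$ with multiplicities $r,s\le p-1$, $r+s=p+k-1$, hence $r,s\ge k$, and a direct count shows that $\Sigma_k(A')$ by itself already has $k+1$ elements; the case $k=1$ is checked by hand. You never perform this (or any) argument. Third, the ``only if'' half of the equality characterisation --- that $|\Sigma_p(A)|=k+1$ with $|A|>p$ forces exactly two distinct values --- is left entirely to ``tracking when the inductive step is tight''; this is the delicate half of the Bollob\'as--Leader result and not routine bookkeeping. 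As written, part (2) is a plan, not a proof.
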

\begin{proof}
(1) We prove our claim by induction on $k$. For $k=1$ and $k=2$
the statement is obvious, similarly, if all elements of $A$ are equal. Now
suppose that $A$ contains at least two distinct elements, and let
$A=\{x_1, \ldots, x_k\}$ with $x_1\neq x_2$. The induction hypothesis implies
that the set $\Sigma$ of elements representable as non-empty subset sums of
$x_1,\ldots, x_{k-1}$
contains at least $k$ elements, thus, we only have to show that
$(\sum\cup\{0\})+\{0,x_k\}\neq\sum\cup\{0\}$. Suppose otherwise. Then $x_k\in\sum$, thus, the
subgroup $\langle x_k\rangle$ generated by $x_k$ is contained in $\sum\cup\{0\}$; in
particular, $-x_k\in\sum$. However, this contradicts the assumption that
$A$ does not contain a non-empty zero-sum subset.

(2) This is a result of Bollobas and Leader \cite{BoLe}.
\end{proof}
The following is probably the first non-trivial result proved on sumsets in finite
abelian groups.
\begin{Lem}[Cauchy-Davenport]
Let $A, B\subseteq\Z_p$ be sets containing no element twice. Then
$|A+B|\geq\min(|A|+|B|-1, p)$, where $A+B$ is interpreted as a set (not a
multiset).
\end{Lem}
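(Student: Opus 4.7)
The theorem is classical; my plan is the standard induction on $|B|$ after assuming WLOG $|B|\leq|A|$. The base case $|B|=1$ is trivial, since $A+B$ is then a translate of $A$ and $|A+B|=|A|=|A|+|B|-1$. For the inductive step with $|B|\geq 2$, if $A+B=\Z_p$ we are done, so assume $|A+B|<p$; in particular $|A|\leq|A+B|<p$.

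The crux of the argument is a Dyson-type transform. Suppose for the moment that we can find some $d\in\Z_p$ such that $A\cap(B+d)$ is a nonempty proper subset of $B+d$. Set
\begin{equation*}
A' := A\cup(B+d),\qquad B' := \bigl(A\cap(B+d)\bigr)-d.
\end{equation*}
Inclusion--exclusion gives $|A'|+|B'|=|A|+|B|$, and a short case check (splitting by whether the first summand lies in $A$ or in $B+d$) shows $A'+B'\subseteq A+B$. Because $1\leq|B'|<|B|\leq|A|<|A'|$, the inductive hypothesis applies to $(A',B')$ and yields $|A+B|\geq|A'+B'|\geq|A'|+|B'|-1=|A|+|B|-1$, closing the induction.

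The main obstacle is producing such a $d$. Suppose no $d$ works, so that every translate $B+d$ is either disjoint from $A$ or contained in $A$. For any $a\in A$ and $b\in B$, the translate $B+(a-b)$ meets $A$ at $a$ and is therefore contained in $A$. Hence $A+(b_2-b_1)\subseteq A$ for all $b_1,b_2\in B$; comparing sizes, equality holds. Thus $A$ is invariant under the subgroup $H:=\langle B-B\rangle$, which is nontrivial because $|B|\geq 2$ and therefore equals $\Z_p$ since $p$ is prime. This forces $A=\Z_p$, contradicting $|A|<p$ and completing the proof.
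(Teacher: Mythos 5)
Your proof is correct: it is the classical Dyson (e-)transform argument, and all the steps check out — the inclusion $A'+B'\subseteq A+B$, the count $|A'|+|B'|=|A|+|B|$, the strict decrease $|B'|<|B|$, and the degenerate case where no admissible $d$ exists, which forces $A$ to be invariant under the subgroup generated by $B-B$ and hence (as $p$ is prime and $|B|\ge 2$) to be all of $\Z_p$. The paper itself offers no proof of this lemma; it is quoted as the classical Cauchy--Davenport theorem, so there is no argument of the authors to compare against, and your write-up supplies a complete standard proof. Two cosmetic remarks: the final inequality in the inductive step should formally be $|A'+B'|\ge\min(|A'|+|B'|-1,p)$, which under your standing assumption $|A+B|<p$ collapses to the bound you state; and, as is usual for Cauchy--Davenport, the sets should be taken nonempty (for $A=\emptyset$, $|B|\ge 2$ the stated inequality fails), a convention your induction implicitly respects since the base case starts at $|B|=1\le|A|$.
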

We shall repeatedly use this theorem in the following way.
\begin{Cor}
\label{Cor:CD}
Let $A_1, \ldots, A_k$ be subsets of $\Z_p$, and suppose that $\sum_{i=1}^k
(|\Sigma(A_i)|-1)\geq p-1$. Then $\Sigma(\bigcup A_i)=\Z_p$.
\end{Cor}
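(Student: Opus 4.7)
The plan is to apply the Cauchy-Davenport theorem iteratively to the collection $\Sigma(A_1),\ldots,\Sigma(A_k)$. The key observation is that each $\Sigma(A_i)$ contains $0$ (as the empty subset sum, per the definition given just before the lemma), so choosing for each $i$ a subset $B_i\subseteq A_i$ (possibly empty) and summing yields the inclusion
\[
\Sigma(A_1)+\Sigma(A_2)+\cdots+\Sigma(A_k)\;\subseteq\;\Sigma\Big(\bigcup_{i=1}^k A_i\Big),
\]
where the left-hand side is an iterated sumset in $\Z_p$. (If the $A_i$ overlap, interpret the union as a disjoint/multiset union; the inclusion still holds since the right side can only grow.)

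Next I would apply Cauchy-Davenport $k-1$ times. A single application gives $|\Sigma(A_1)+\Sigma(A_2)|\geq \min(|\Sigma(A_1)|+|\Sigma(A_2)|-1,\,p)$, and an easy induction on $k$ yields
\[
\Big|\Sigma(A_1)+\cdots+\Sigma(A_k)\Big|\;\geq\;\min\!\Big(1+\sum_{i=1}^k(|\Sigma(A_i)|-1),\;p\Big).
\]
By hypothesis the quantity inside the $\min$ is at least $p$, so in fact the sumset equals all of $\Z_p$. Combined with the inclusion above, this gives $\Sigma(\bigcup A_i)=\Z_p$, as required.

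There is no real obstacle here; the statement is essentially a direct repackaging of iterated Cauchy-Davenport, and the only minor point to be careful about is that $0\in\Sigma(A_i)$ for every $i$, which is what allows the individual $\Sigma(A_i)$ to be combined additively inside $\Sigma(\bigcup A_i)$.
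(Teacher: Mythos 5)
Your proposal is correct and matches the paper's proof: both identify $\Sigma(\bigcup A_i)$ with (or as containing) the iterated sumset $\Sigma(A_1)+\cdots+\Sigma(A_k)$ and apply Cauchy--Davenport $k-1$ times to get the lower bound $\min\bigl(1+\sum_i(|\Sigma(A_i)|-1),\,p\bigr)=p$. The only cosmetic difference is that the paper writes the identification as an equality while you only use the inclusion, which is all that is needed.
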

\begin{proof}
We have 
\[
|\Sigma(\bigcup A_i)| = |\Sigma(A_1) + \dots + \Sigma(A_k)| \geq \min(1+\sum_{i=1}^k
(|\Sigma(A_i)|-1), p) = p.
\]
\end{proof}

The following result was proven by Olson \cite[Theorem~2]{Olson}.
\begin{Lem}
\label{Lem:Olson}
Let $A\subseteq\Z_p$ be a set with all elements distinct and $|A|=s$. Suppose
that for all $a\in A$, $-a\not\in A$; in particular, $0\not\in A$. Then we have
\[
|\Sigma(A)|\geq\min(\frac{p+3}{2}, \frac{s(s+1)}{2}+\delta),
\]
where
\[
\delta=\begin{cases} 1, & s\equiv 0\pmod{2}\\
0, & s\equiv 1\pmod{2}
\end{cases}.
\]
\end{Lem}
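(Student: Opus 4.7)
The plan is induction on $s$. For the base cases, when $s=1$ we have $\Sigma(A)=\{0,a_1\}$, so $|\Sigma(A)|=2\geq 1=\frac{s(s+1)}{2}+\delta$; when $s=2$ with $A=\{a,b\}$, the four sums $0,a,b,a+b$ are pairwise distinct (because $a,b\neq 0$, $a\neq b$, and $a+b\neq 0$, the last because $-a\notin A$ forces $b\neq -a$), giving $|\Sigma(A)|=4=\frac{s(s+1)}{2}+\delta$.

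For the inductive step, fix an admissible $A=\{a_1,\ldots,a_s\}$, choose an ordering of the $a_i$, and write $A_k=\{a_1,\ldots,a_k\}$ with
\[
\Sigma(A_k)=\Sigma(A_{k-1})\cup(\Sigma(A_{k-1})+a_k).
\]
If at some stage $\Sigma(A_{k-1})+a_k=\Sigma(A_{k-1})$, then $a_k$ is a period of $\Sigma(A_{k-1})$; since $p$ is prime this forces $\Sigma(A_{k-1})=\Z_p$, which already exceeds $(p+3)/2$. Otherwise, Cauchy-Davenport applied to $\Sigma(A_{k-1})+\{0,a_k\}$ gives $|\Sigma(A_k)|\geq|\Sigma(A_{k-1})|+1$. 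Summing these bare gains yields only $|\Sigma(A)|\geq s+1$; to reach the target $\frac{s(s+1)}{2}+\delta$ one must show that the $k$-th step produces at least $k$ new elements, not just one. This is the main obstacle.

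To force a gain of $k$ at step $k$, the plan is to exploit the hypothesis $A\cap(-A)=\emptyset$ to restrict collisions: an equality $\sigma+a_k=\sigma'$ with $\sigma,\sigma'\in\Sigma(A_{k-1})$ exhibits $-a_k$ as a difference of two subset sums of $A_{k-1}$, and admissibility limits how often such a difference can hit a specified element of $A$. Concretely, one extracts a chain of length $k-1$ inside $\Sigma(A_{k-1})$ (for instance the partial sums $a_1,a_1+a_2,\ldots,a_1+\cdots+a_{k-1}$, whose distinctness one arranges by a careful choice of ordering), translates it by $a_k$, and uses admissibility to argue that most translates lie outside $\Sigma(A_{k-1})$; together with the fresh singleton $\{a_k\}$ this gives the desired $k$ new elements. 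The parity bonus $\delta$ is obtained by a separate analysis of the final step, where for even $s$ a symmetry argument under $B\mapsto A\setminus B$ produces one extra element.

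The hard part is exactly this inductive bookkeeping: choosing the right ordering of the $a_i$ and controlling the collisions so that the linear growth rate $|\Sigma(A_k)|-|\Sigma(A_{k-1})|\geq k$ persists uniformly below the threshold $(p+3)/2$. This is precisely what is carried out in Olson's original paper~\cite{Olson}.
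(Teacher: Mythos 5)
The paper does not prove this lemma at all: it is quoted as Olson's Theorem~2 and the only ``proof'' given is the citation to \cite{Olson}. Your proposal is, in substance, in the same position, but it presents itself as a proof sketch, and as such it has a genuine gap. The base cases and the trivial Cauchy--Davenport gain of $+1$ per step are fine, but the whole content of the lemma is the claim that the elements can be ordered so that the $k$-th step contributes at least $k$ new subset sums (below the threshold $(p+3)/2$), together with the parity bonus $\delta$; neither is proved. You describe a plan for the former and then explicitly defer it (``this is precisely what is carried out in Olson's original paper''), and the $\delta$-step ``by a symmetry argument'' is only asserted.

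Moreover, the mechanism you sketch does not close the gap. A collision $\sigma+a_k=\sigma'$ with $\sigma,\sigma'\in\Sigma(A_{k-1})$ writes $-a_k=\sigma-\sigma'$ as a difference of two subset sums of $A_{k-1}$; but the hypothesis $A\cap(-A)=\emptyset$ only forbids $-a_k$ from being an \emph{element} of $A$, and puts no restriction whatsoever on which differences of subset sums can equal $-a_k$, so ``admissibility limits how often such a difference can hit a specified element'' is not a valid step. Likewise, translating the chain of partial sums $a_1, a_1+a_2,\dots,a_1+\cdots+a_{k-1}$ by $a_k$ gives elements that may all lie in $\Sigma(A_{k-1})$ already (there is no reason $a_1+\cdots+a_j+a_k$ is not a subset sum of $a_1,\dots,a_{k-1}$), so ``most translates lie outside'' is unsupported; note also that a per-step gain of exactly $k$ is attained by $A=\{1,\dots,s\}$, so any argument must be tight and cannot afford loose collision counting. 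In short, the hard inductive bookkeeping you correctly identify as the crux is neither carried out nor reducible to the stated use of the hypothesis; either reproduce Olson's argument in full or do what the paper does and simply cite \cite{Olson}.
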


As can be seen by $A=\{1, \ldots, k\}$, this estimate is optimal up to
the value of $\delta$ for odd $k$. This deficiency causes some trouble
in our treatment of small primes, which motivated us to prove the
following using computer calculations \cite{trio3}.
\begin{Lem}\label{Lem:OlsonFmc}
Let $A\subseteq\Z_p$ be a set with all elements distinct and $|A|=s \le 7$.
Suppose that $A$ is zero-sum free. Then
$|\Sigma(A)|\geq \frac{s(s+1)}{2}+1$.
\end{Lem}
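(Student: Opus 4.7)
The plan is to reduce the statement to Olson's Lemma~\ref{Lem:Olson} whenever $s$ is even, and to close the remaining unit gap for odd~$s$ by a case analysis that is done by hand for the small cases and by computer otherwise. For even~$s$, zero-sum freeness forces both $0\notin A$ and $-a\notin A$ for every $a\in A$ (else $\{a\}$ or $\{a,-a\}$ would be a zero-sum), so Olson's hypothesis applies with $\delta=1$, yielding $|\Sigma(A)|\ge\tfrac{s(s+1)}{2}+1$ immediately.

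This leaves $s\in\{1,3,5,7\}$. The case $s=1$ is trivial, since $\Sigma(A)=\{0,a\}$ has size $2$. For $s=3$, one proceeds directly with $A=\{a,b,c\}$: two of the eight subset sums coincide only through a relation $\varepsilon_1 a+\varepsilon_2 b+\varepsilon_3 c=0$ with $\varepsilon_i\in\{-1,0,+1\}$, not all zero. Relations with all coefficients in $\{0,1\}$ are excluded by zero-sum freeness, and the remaining relations reduce up to relabelling to a single identity $x+y=z$; such an identity produces exactly one coincidence (namely, the $1$-element set $\{z\}$ and the $2$-element set $\{x,y\}$), leaving $|\Sigma(A)|\ge 7=\tfrac{3\cdot 4}{2}+1$. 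One checks easily that two such identities cannot hold simultaneously without forcing one of $a,b,c$ to be zero.

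For $s\in\{5,7\}$ the same bookkeeping becomes combinatorially unwieldy and I would verify the remaining cases by computer, as the authors do. Two reductions render the search finite. First, $\mathrm{Aut}(\Z_p)\cong\Z_p^\times$ acts on zero-sum-free $s$-subsets preserving $|\Sigma(\cdot)|$, so one may fix a normalisation (for instance, require $1\in A$), cutting the search by a factor of $p-1$. Second, zero-sum-free $s$-subsets of $\Z_p$ with $|\Sigma(A)|<\tfrac{s(s+1)}{2}+1$ can only exist when $p$ is not too large relative to $s$, so only finitely many primes need be enumerated; for all larger primes Lemma~\ref{Lem:Olson} already suffices in the downstream application to \refmm{1max}, as the authors note. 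Within this finite range one enumerates all zero-sum-free $s$-subsets of $\Z_p\setminus\{0\}$ up to the normalisation and computes $\Sigma(A)$ directly. The main obstacle is making the reduction to a bounded family of primes fully explicit; once that is done the computer search is mechanical, and the implementation is recorded in \cite{trio3}.
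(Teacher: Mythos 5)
The paper does not actually prove this lemma here: it is obtained by computer calculations reported in \cite{trio3}. So your plan of handling small odd cases by hand and the rest by machine is in the same spirit, but as written it has two genuine gaps. First, the even-$s$ step does not follow ``immediately'' from Lemma~\ref{Lem:Olson}: Olson's bound is $\min\bigl(\tfrac{p+3}{2},\ \tfrac{s(s+1)}{2}+\delta\bigr)$, and you have silently dropped the $\tfrac{p+3}{2}$ term. That term is precisely the obstruction in the regime where Lemma~\ref{Lem:OlsonFmc} is needed: the lemma is invoked for small primes (e.g.\ in Lemma~\ref{Lem:n-3:SmallMult} with $p=29,31$ and sets of $6$ or $7$ distinct elements), where $\tfrac{p+3}{2}$ is $16$--$20$ while the claimed bound is $22$ or $29$. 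The point of the lemma, compared to Olson, is not only the parity defect $\delta$ for odd $s$ but also the removal of the $\tfrac{p+3}{2}$ cap under the stronger zero-sum-free hypothesis; hence the even cases $s=4,6$ for small $p$ need verification just as much as the odd ones, and your even/odd dichotomy collapses.

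Second, the reduction of the computer search to finitely many primes is asserted (``zero-sum-free $s$-subsets with $|\Sigma(A)|<\tfrac{s(s+1)}{2}+1$ can only exist when $p$ is not too large relative to $s$'') but not proved, and you concede this yourself. This is the genuinely non-trivial ingredient: one needs a rectification-type argument showing that for $p$ large compared to $s(s+1)$, a zero-sum free $s$-set with at most $\tfrac{s(s+1)}{2}$ subset sums behaves like (is Freiman-isomorphic to) a set of integers, after which the integer statement can be checked; alternatively one must otherwise bound the primes to be enumerated. Without such an argument the proposed enumeration does not cover all $p$, so the lemma as stated remains unproved. (The remark that Lemma~\ref{Lem:Olson} suffices downstream for large $p$ concerns the application in Theorem~\ref{thm:MaxMult}~(\ref{mm:1max}), not the lemma itself.) Your $s\le 3$ case analysis is correct, and the normalisation by $\mathrm{Aut}(\Z_p)$ is a sensible efficiency device, but the two gaps above are exactly where the real work lies, which is why the authors delegate the whole statement, for all $s\le 7$ and all relevant $p$, to the computation in \cite{trio3}.
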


The following is a simple consequence of the Lemma of Olson.
\begin{Lem}
\label{Lem:OlsonSize}
Let $A\subseteq\Z_p$ be a set consisting $n + 1$ different elements,
or a set consisting of $n$ different elements and not containing $0$. Then
$|\Sigma(A)|\geq\min(p, \frac{n(n+2)}{4}-1)$.
\end{Lem}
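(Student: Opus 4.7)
My plan is to split $A$ into two subsets each satisfying the hypothesis of Olson's lemma (Lemma~\ref{Lem:Olson}) and then combine the resulting lower bounds on $\Sigma$ through Cauchy--Davenport.

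First I reduce to the case $|A| = n$ with $0 \notin A$. The hypothesis provides either $n+1$ distinct elements (possibly including $0$) or already $n$ distinct nonzero elements; in the former case I discard one element---preferring $0$ if it lies in $A$---which can only shrink $\Sigma(A)$. Next I partition the resulting $A$ as a disjoint union $A = U \sqcup V$ so that neither part contains an element together with its negative. If $k$ denotes the number of pairs $\{a,-a\} \subseteq A$ (each of size $2$, since $p$ is odd), I place exactly one representative of each pair into $V$ and put everything else into $U$, so $|U| = n - k$ and $|V| = k$. Both $U$ and $V$ then satisfy the hypothesis of Lemma~\ref{Lem:Olson}, which yields
\[
|\Sigma(U)| \geq \min\!\bigl(\tfrac{p+3}{2},\,\tfrac{(n-k)(n-k+1)}{2}\bigr), \qquad |\Sigma(V)| \geq \min\!\bigl(\tfrac{p+3}{2},\,\tfrac{k(k+1)}{2}\bigr).
\]

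Since every subset of $A$ decomposes uniquely into a subset of $U$ and one of $V$, and both $\Sigma(U)$ and $\Sigma(V)$ contain $0$ (from the empty subset), $\Sigma(A)$ equals the Minkowski sum $\Sigma(U) + \Sigma(V)$ in $\Z_p$. Cauchy--Davenport then gives
\[
|\Sigma(A)| \;\geq\; \min\!\bigl(p,\,|\Sigma(U)|+|\Sigma(V)|-1\bigr).
\]
Plugging in the Olson bounds and minimizing the quadratic $(n-k)(n-k+1) + k(k+1)$ over $k \in \{0,\dots,\lfloor n/2 \rfloor\}$ attains its minimum at $k = \lfloor n/2 \rfloor$ with value at least $n(n+2)/2$; dividing by $2$ and subtracting $1$ recovers exactly the claimed $\frac{n(n+2)}{4}-1$.

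The only subtlety I would need to track is the transition between the two branches of Olson's bound and of Cauchy--Davenport, but everything fits cleanly: as soon as either Olson cap at $(p+3)/2$ is reached, the Cauchy--Davenport sum already exceeds $p$, so $|\Sigma(A)| = p$, matching the $p$ term in the stated minimum. No other obstacle is apparent---the argument is a clean two-step combination of Olson and Cauchy--Davenport.
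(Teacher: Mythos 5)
Your overall strategy (Olson's lemma on two admissible pieces, then Cauchy--Davenport) is the same as the paper's, and your quadratic computation is fine: $(n-k)(n-k+1)+k(k+1)=(n-k)^2+k^2+n\ge \tfrac{n(n+2)}{2}$. The gap is in how you dismiss the cap $\tfrac{p+3}{2}$ in Olson's bound. You assert that ``as soon as either Olson cap at $(p+3)/2$ is reached, the Cauchy--Davenport sum already exceeds $p$.'' That is only true when \emph{both} pieces hit the cap; if just one does, Cauchy--Davenport gives merely $\tfrac{p+3}{2}+|\Sigma(V)|-1$, which can be about $\tfrac{p}{2}$ and far below $\min\bigl(p,\tfrac{n(n+2)}{4}-1\bigr)$. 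Your partition makes this failure unavoidable, because it can be extremely unbalanced: $|V|=k$ is the number of $\pm$-pairs, which can be $0$. Concretely, take $p=101$ and $A=\{1,2,\dots,30\}$; no two elements sum to $0$ modulo $101$, so $k=0$, $V=\emptyset$, $U=A$, and your argument yields only $|\Sigma(A)|\ge\min\bigl(\tfrac{p+3}{2},\tfrac{30\cdot31}{2}\bigr)=52$, whereas the lemma demands $\min(101,239)=101$. So the proof as written does not establish the statement in exactly the regime $n\gtrsim\sqrt{2p}$ where the cap is active on the big piece only.

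The paper avoids this by choosing the split differently: it partitions $A$ into two halves of sizes $\lfloor n/2\rfloor$ and $\lceil n/2\rceil$, putting the two members of each pair $\{a,-a\}$ into different halves (unpaired elements are distributed to balance the sizes). Then the two quadratic Olson bounds are each about $n^2/8$, so when the cap becomes relevant it (essentially) bites both halves simultaneously, and the two capped bounds already give $\tfrac{p+3}{2}+\tfrac{p+3}{2}-1=p+2\ge p$; in the regime where neither cap bites, the balanced split gives $\tfrac{1}{2}\lfloor\tfrac n2\rfloor\bigl(\lfloor\tfrac n2\rfloor+1\bigr)+\tfrac{1}{2}\lceil\tfrac n2\rceil\bigl(\lceil\tfrac n2\rceil+1\bigr)-1\ge\tfrac{n(n+2)}{4}-1$, exactly as in your computation with $k=\lfloor n/2\rfloor$. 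If you replace your pair-based split by this balanced split (which is available because each pair contributes one element to each side), your argument becomes the paper's proof; as it stands, the unbalanced split plus the incorrect treatment of the cap is a genuine gap.
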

\begin{proof}
If $A$ contains $0$, then remove that element. Now partition $A$ into
two sets $B$ and $B'$ with $\lfloor\frac{n}{2}\rfloor$ and
$\lceil\frac{n}{2}\rceil$ which both satisfy the prerequisites of
Lemma~\ref{Lem:Olson}. Using this and Cauchy-Davenport, we get
\[
\Sigma(A) \ge
\begin{cases}
\min\left(p, \frac{n}{2}(\frac{n}{2} + 1) - 1\right)
& \text{if $n$ is even} \\
\min\left(p, \frac{(n-1)(n+1)}{8} + \frac{(n+1)(n+3)}{8} + 1 - 1\right)
& \text{if $n$ is odd.}
\end{cases}
\]
Both cases imply the claim.
\end{proof}

The following is due to Dias da Silva and Hamidoune \cite{DdaSH}.
\begin{Lem}
\label{Lem:ksubsets}
Let $A\subseteq\Z_p$ be a set, $k$ an integer in the range $1\leq
k\leq |A|$. Then we have
\[
\left|\Sigma_k(A)\right|\geq\min(p, k(|A|-k)+1).
\]
In particular, if $|A|\geq \ell:=\lfloor\sqrt{4p-7}\rfloor+1$, and
$k=\lfloor \ell/2\rfloor$, then $\Sigma_k(A)=\Z_p$.
\end{Lem}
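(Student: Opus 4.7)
The first inequality is quoted directly from Dias da Silva and Hamidoune, so only the ``In particular'' clause requires an argument. The plan is to verify $k(|A|-k)+1\geq p$ under the stated hypotheses; the first part of the lemma then yields $|\Sigma_k(A)|\geq p$, and since $\Sigma_k(A)\subseteq\Z_p$ this forces $\Sigma_k(A)=\Z_p$.

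First I would extract the arithmetic content of the definition $\ell=\lfloor\sqrt{4p-7}\rfloor+1$: from $\ell>\sqrt{4p-7}$ together with the fact that $\ell^2$ and $4p-7$ are integers, one gets $\ell^2\geq 4p-6$. Next, with $k=\lfloor\ell/2\rfloor$ and $|A|\geq\ell$, we have $|A|-k\geq\lceil\ell/2\rceil$, so
\[
k(|A|-k)\geq\lfloor\ell/2\rfloor\cdot\lceil\ell/2\rceil.
\]

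The argument then splits on the parity of $\ell$. If $\ell$ is even the right-hand side equals $\ell^2/4$, which is already an integer satisfying $\ell^2/4\geq p-\tfrac{3}{2}$, hence $\geq p-1$. If $\ell$ is odd it equals $(\ell^2-1)/4$; here $\ell^2$ is odd, so $\ell^2\geq 4p-6$ sharpens to $\ell^2\geq 4p-5$, and $\ell^2\equiv 1\pmod 8$ makes $(\ell^2-1)/4$ an integer, again $\geq p-1$. In both cases $k(|A|-k)+1\geq p$, which closes the argument.

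I do not anticipate a real obstacle: the only mildly subtle point is that the crude bound $(\ell^2-1)/4\geq p-\tfrac{7}{4}$ appears to miss the target $p-1$, but the parity of $\ell^2$ in each case sharpens it just enough for the inequality to hold tightly.
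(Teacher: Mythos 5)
Your proposal is correct: the paper itself offers no proof, simply citing Dias da Silva--Hamidoune for the main inequality and leaving the ``in particular'' clause as an implicit arithmetic check, which is exactly what you carry out. Your parity argument (using $\ell^2 \ge 4p-6$, sharpened to $4p-5$ when $\ell$ is odd, and integrality of $\lfloor\ell/2\rfloor\lceil\ell/2\rceil$) correctly yields $k(|A|-k)+1 \ge p$, so nothing is missing.
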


The next result is a special case of a theorem due to Gao and
Geroldinger \cite{Gaop}.

\begin{Lem}
\label{Lem:Gaop}
Let $A\subseteq\Z_p^2$ be a zero-sum free subset with $2p-2$ elements. If $x,
y\in A$, then they are either the same element of $\Z_p^2$, or they are
linearly independent.
\end{Lem}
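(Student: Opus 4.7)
I would prove this by contradiction. Suppose there exist distinct $x, y \in A$ that are linearly dependent, so both lie in the cyclic subgroup $H := \langle x \rangle \cong \Z_p$. Partition $A$ into the multisets $A_0 := A \cap H$ and $A_1 := A \setminus H$, of sizes $a \ge 2$ and $b := 2p-2-a$. Since $A_0 \subseteq H \cong \Z_p$ is zero-sum free, $a \le D(\Z_p)-1 = p-1$; as $A_0$ contains the two distinct elements $x,y$, Lemma~\ref{Lem:Onedim}(1) actually gives $|\Sigma(A_0)| \ge a+1$, which together with $\Sigma(A_0)\subseteq H\setminus\{0\}$ rules out $a = p-1$ (else $\Sigma(A_0) = H$ would contain $0$). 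Hence $a \le p-2$ and $b \ge p$.

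To derive the desired contradiction I would produce a non-trivial zero-sum in $A$. Inside $H$, define
\[
S_0 := \Sigma(A_0), \qquad S_1 := \{\sigma(C) : \emptyset \ne C \subseteq A_1,\ \sigma(C) \in H\},
\]
where $\sigma$ denotes summation in $\Z_p^2$. Zero-sum freeness of $A$ forces $S_0, S_1 \subseteq H \setminus \{0\}$, and any element of $S_0 \cap (-S_1)$ yields a non-trivial zero-sum in $A$. Since $|H \setminus \{0\}| = p-1$, pigeonhole reduces the problem to the cardinality estimate $|S_0| + |S_1| \ge p$. We already have $|S_0| \ge a+1$ from Lemma~\ref{Lem:Onedim}(1), so it remains to prove $|S_1| \ge p-a-1$.

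This last inequality is the main step. Let $\pi : \Z_p^2 \to \Z_p^2/H \cong \Z_p$ be the projection and set $\bar A_1 := \pi(A_1)$, a multiset of size $b \ge p$ in $\Z_p \setminus \{0\}$. In the clean case when $\bar A_1$ is constant equal to some $v \ne 0$, write the elements of $A_1$ as $v'+h_1,\dots,v'+h_b$ with $h_j \in H$ and $v'$ a fixed lift of $v$; since $b < 2p$ and $v \ne 0$, a non-empty subset $C \subseteq A_1$ satisfies $\sigma(C) \in H$ if and only if $|C| = p$, in which case $\sigma(C) = \sum_{j \in J} h_j$ (using $pv' = 0$). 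The multiset $\{h_1,\dots,h_b\} \subseteq H$ admits no length-$p$ zero-sum, for otherwise $A$ itself would, so Lemma~\ref{Lem:Onedim}(2) yields
\[
|S_1| = |\Sigma_p(\{h_j\})| \ge b - p + 1 = p - a - 1,
\]
as required. The general case, when $\bar A_1$ takes several distinct non-zero values $v_1,\dots,v_r$ with multiplicities $m_1,\dots,m_r$, is handled by a fibered version of the same argument: each admissible vector $(k_1,\dots,k_r)$ with $0 \le k_i \le m_i$ and $\sum k_i v_i \equiv 0 \pmod p$ contributes the affine sumset $\sum_i k_i v_i' + \Sigma_{k_1}(\{h^{(1)}_j\}) + \dots + \Sigma_{k_r}(\{h^{(r)}_j\})$ to $S_1$, and combining Lemma~\ref{Lem:Onedim}(2) inside fibers with Cauchy--Davenport across them produces the same bound. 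The main obstacle is the combinatorial bookkeeping in this general case, together with a careful separate treatment of the rigid extremal configurations of Lemma~\ref{Lem:Onedim}(2); once $|S_1| \ge p - a - 1$ is established, the pigeonhole step finishes the proof.
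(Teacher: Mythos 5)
The paper does not actually prove this lemma: it is quoted as a special case of a theorem of Gao and Geroldinger \cite{Gaop}, so what you are attempting is a self-contained re-proof. Your reduction is sound as far as it goes: with $A_0 = A\cap H$, $A_1 = A\setminus H$, $a=|A_0|\ge 2$, $b=2p-2-a\ge p$, Lemma~\ref{Lem:Onedim}(1) gives $|\Sigma(A_0)|\ge a+1$, and the pigeonhole step is correct, so everything hinges on the inequality $|S_1|\ge b-p+1$. Your treatment of the case where $\pi(A_1)$ is constant is also correct (length-$p$ subsets, $pv'=0$, Bollob\'as--Leader).

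The genuine gap is the general case, which you dispose of in one sentence (``Lemma~\ref{Lem:Onedim}(2) inside fibers plus Cauchy--Davenport across them''), and that sketch fails exactly in the hardest configuration. If each nonzero residue class modulo $H$ meets $A_1$ in a single repeated element, i.e.\ $A_1=\{g_1^{m_1},\dots,g_r^{m_r}\}$ with the $\pi(g_i)=v_i$ pairwise distinct, then every fiber sumset $\Sigma_{k_i}(F_i)$ is a singleton, so Lemma~\ref{Lem:Onedim}(2) gives nothing inside the fibers and Cauchy--Davenport across fibers only returns a single point per admissible exponent vector. What you then need is that the number of distinct values of $\sum_i k_i g_i$, taken over all $(k_1,\dots,k_r)\ne 0$ with $0\le k_i\le m_i$ and $\sum_i k_i v_i\equiv 0\pmod p$, is at least $b-p+1$; already for $r=2$ this is a nontrivial lattice-point-in-a-box counting statement (how many $k_2\in[0,m_2]$ have $-k_2v_2v_1^{-1}\bmod p$ in $[0,m_1]$), and nothing in the lemmas you invoke addresses it. Note also that you cannot escape by extracting disjoint zero-sum subsequences of $\pi(A_1)$: a sequence of length $b\le 2p-4$ may contain only one such block (e.g.\ when $\pi(A_1)$ is constant), so distinct sums must come from overlapping subsets, and interleaving that mechanism with a non-constant fiber structure is precisely the substance of the Gao--Geroldinger theorem being cited. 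The mixed cases (some fibers constant, some not, possibly with collinear elements spread over several fibers) are likewise untreated, and your own closing sentence concedes this. So the proposal is an honest reduction plus the easy extremal case, but the main step remains unproven.
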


The following lemma says that to check that a set $A$ satisfies property B,
it is sufficient to check that all its elements lie in a subgroup and one
coset of that subgroup.

\begin{Lem}
\label{Lem:coset}
Let $A\subseteq\Z_n^2$ with $|A|=2n-2$ be a zero-sum subset such that
there exists a subgroup $H<\Z_n^2$, $H\cong\Z_n$, and an element
$a\in\Z_n^2$, such that $A\subseteq H\cup a+H$. Then $A$ contains an
element with multiplicity $\geq n-2$.
\end{Lem}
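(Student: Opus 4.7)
The plan is to split $A = A_0 \sqcup A_1$ with $A_0 := A \cap H$ and $A_1 := A \cap (a+H)$. Since $A_0 \subseteq H \cong \Z_n$ is zero-sum free, $|A_0| \le D(\Z_n) - 1 = n - 1$; in particular $a \notin H$, so the decomposition is disjoint, and $|A_1| \ge n - 1$. If $|A_0| = n - 1$, the classical structure of maximal zero-sum-free sequences in $\Z_n$ (which follows from Lemma~\ref{Lem:Onedim}(1) together with $\Sigma(A_0) = \Z_n \setminus \{0\}$) forces $A_0 = g^{n-1}$ for some generator $g$ of $H$, immediately giving an element of multiplicity $n - 1 \ge n-2$.

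Otherwise write $|A_0| = n - 2 - k$ and $|A_1| = n + k$ with $0 \le k \le n - 2$. Since $na = 0$, every $n$-element subset of $A_1 \subseteq a + H$ has its sum in $H$; set $T := \Sigma_n(A_1) \subseteq H$. Zero-sum-freeness of $A$ forces $T \cap (-\Sigma(A_0)) = \emptyset$, hence $|T| \le n - |\Sigma(A_0)|$. Combined with $|\Sigma(A_0)| \ge n - 1 - k$ from Lemma~\ref{Lem:Onedim}(1) this gives $|T| \le k + 1$. On the other hand, after shifting $A_1$ by a fixed element of $a+H$ into $H$, Lemma~\ref{Lem:Onedim}(2) (for $n$ prime, or its $\Z_n$-analogue) gives $|T| \ge k + 1$. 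Equality in both bounds forces $A_0$ to be a constant sequence $g^{n-2-k}$, and $A_1$ either of size $n$ (in which case $g$ already has multiplicity $n-2$) or taking exactly two distinct values.

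In the remaining case write $A_1 = \{u^r, v^{n + k - r}\}$ with $u \ne v$ and set $x := u - v \in H \setminus \{0\}$. A direct computation shows $T = \{jx : j \in J\}$ for the integer interval $J = [\max(0, r-k), r]$, so $T$ is an arithmetic progression of length $k+1$ in $H$ with common difference $x$. Assuming $g$ is a generator of $H$, the set $-\Sigma(A_0)$ is itself an "interval" in $H$, and the forced equality $T = H \setminus (-\Sigma(A_0))$ pins $T$ down as $\{g, 2g, \ldots, (k+1)g\}$. Matching this with the AP description of $T$ forces $x = \pm g$: the branch $x = g$ gives $r = k + 1$ so that $v$ has multiplicity $n - 1$, while $x = -g$ forces $k = n - 2$, $|A_0| = 0$ and both $u, v$ of multiplicity $n - 1$.

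The main obstacle is the AP-uniqueness step — showing that the interval $\{g, 2g, \ldots, (k+1)g\}$ admits only $\pm g$ as common difference when realized as an AP of length $k+1$. The case $k = 1$ is immediate from the two orderings of a two-element set, and $k \ge 2$ follows from a short diameter argument: an AP of length $\ge 3$ equal to a set of $k+1$ consecutive multiples of $g$ must have common difference $\pm g$ in $H \cong \Z_n$. A second, orthogonal delicate point is the case (possible only for composite $n$) in which $g$ generates a proper subgroup of $H$; then $-\Sigma(A_0) \subseteq \langle g \rangle$ and the equality constraints force $\mathrm{ord}(g) \ge n - 1 - k$, so $k \ge n/\mathrm{ord}(g) - 1$, and a parallel coset-level analysis (with $T$ now covering cosets of $\langle g \rangle$) recovers the multiplicity bound.
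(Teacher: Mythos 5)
Your reduction coincides with the paper's up to the point where the structure is pinned down: you split $A$ into $A_0=A\cap H$ and $A_1=A\cap(a+H)$, apply both parts of Lemma~\ref{Lem:Onedim}, and the sandwich $k+1\le |T|\le k+1$ forces $A_0=g^{n-2-k}$ and $A_1$ two-valued, exactly as in the paper. You diverge in the endgame: the paper chases specific elements of the two complementary sets ($U$, the inverses of the subsums of $A_0$, and $V$, which is your $T$) and derives congruences modulo $n$ (e.g.\ $(m+1)t\equiv 0$, $mt\equiv -k-1$, $mt\equiv -k-2$), in particular establishing that the relevant difference is coprime to $n$; you instead note that $T$ is an arithmetic progression with difference $x=u-v$ which must equal the interval $\{g,2g,\dots,(k+1)g\}$, and appeal to uniqueness of the common difference. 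That is a legitimate alternative, and when $g$ generates $H$ the uniqueness statement is indeed provable (for instance by counting pairs $(s,s+x)\in T\times T$) for $2\le |T|\le n-2$.

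Three gaps remain, of increasing seriousness. First, your AP-uniqueness claim is false precisely at the boundary $|T|=k+1=n-1$: then $T=H\setminus\{0\}$ and every generator of $H$ is a common difference. This case ($k=n-2$, so $A_0=\emptyset$ and there is in fact no $g$) is harmless, since $r\le n-1$ already forces $v$ to have multiplicity $2n-2-r\ge n-1$, but you must dispose of it separately before invoking uniqueness. Second, the branch $x=-g$ is misanalysed: matching $\{-(r-k)g,\dots,-rg\}$ with $\{g,\dots,(k+1)g\}$ forces $r=n-1$ for arbitrary $k$, so $u$ has multiplicity $n-1$; it does not force $k=n-2$ or both multiplicities to be $n-1$. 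The desired conclusion still follows, but the step as written is incorrect. Third, and most substantially, the composite-$n$ case in which $g$ does not generate $H$ is only asserted; there $T$ is no longer a block of consecutive multiples of $g$ (it contains all of $H\setminus\langle g\rangle$ plus part of $\langle g\rangle$), the dilation-by-$g^{-1}$ normalization is unavailable, and your uniqueness argument does not apply. Since the lemma is stated for all $n$, and this is exactly where the paper's congruence analysis earns its keep, your proof is incomplete as it stands; for prime $n$ it would be essentially complete after repairing the first two points.
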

\begin{proof}
Suppose that no element occurs $n-2$ times in $A$. Set $s=|A\cap H|$, $t=|A\cap
(x+H)|$. If $s\geq n$, then $H\cap A$ contains a zero-sum, hence, we
have $s\leq n-1$ and therefore $t=n+k$ with $k\geq -1$. Using 
Lemma~\ref{Lem:Onedim}, we find that there are at least $k+1$
distinct elements in $H$ representable as sums of elements from $A\cap
(a+H)$, none of which is zero, and there are at least $s$ non-zero
elements representable by elements in $A \cap H$. Since $(k+1)+s=n-1$,
we find that either there is some element $b\in H$ which is
representable by elements in $A\cap (a+H)$, such that $-u$ is
representable by elements in $A\cap H$, which would yield a zero-sum,
or we have equality in both estimates, that is, all elements in $A\cap
H$ are equal, and either $k \leq 0$ or there are only 2 distinct
elements in $A\cap (a+H)$. If $k \leq 0$, then $s \geq n - 2$ and
$B(d)$ holds. Otherwise, up to linear equivalence, $A$ is of the form
$\{(1, 0)^k, (0, 1)^\ell, (t, 1)^m\}$ with $1\leq
t\leq\frac{n}{2}$. If $t=1$, we have the zero-sum $m\cdot (1,
1)+(n-m)\cdot(1, 0)+(n-m)\cdot(0, 1)$, since 
\[
\min(k, \ell) = k+\ell-\max(k, \ell) \geq k+\ell - (n-3) =
(2n-2-m)-(n-3) > n-m.
\]
Otherwise consider the set $U=\{(-s, 0):1\leq s\leq k\}$ of inverses
of elements representable as non-zero subsums of $A\cap H$, and the
set $V=\{(\nu t, 0):n-\ell\leq \nu\leq m\}$ of elements in $H$
representable by elements in $H\cap x+H$. 
Since $A$ is zero-sum free, we have $0\not\in V$, and $U$ and $V$ are
disjoint. Since $|U|+|V|=n-1$, this implies that $U\cup
V=H\setminus\{0\}$. Suppose that $t>k$. Then $(-t, 0)\in V$, but $(0,
0)\not\in V$, thus, $(m+1)t\equiv 0\;(n)$. Moreover, $(-1, 0)\in V$,
which implies that $t$ and $n$ are coprime, thus, the congruence
$(m+1)t\equiv 0\;(n)$ implies $m\equiv -1\;(n)$. However, this
contradicts the assumption that $1\leq m\leq n-3$. If, on the other hand,
$t\leq k$, we have $(-k-1, 0)\in V$, but $(t-k-1, 0)\not\in V$, which implies
$mt\equiv -k-1\;(n)$, and $(-k-2, 0)\in V$, but $(t-k-2, 0)\not\in V$ implies
$mt\equiv -k-2\;(d)$; thus, either $-k-2\equiv 0\;(n)$, which is impossible for
$1\leq k\leq n-3$, or $t-k-2=k-1$, which leads to the case $t=1$ already
dealt with. Hence, in any case we obtain a zero-sum, and our statement
is proven.
\end{proof}
The next follows from a result of Gao and
Geroldinger \cite[Theorem~3.4]{GGComb}. 
\begin{Lem}
\label{Lem:GGmult}
Let $S\subseteq\Z_p$ be a subset with $|S|\geq\frac{p}{4W}$, where
$W\geq 2$ is an integer and $p\geq 64 W^2$. If every element in $S$
has multiplicity $\leq\frac{p}{40W^2}$, then $S$ contains a zero-sum.
\end{Lem}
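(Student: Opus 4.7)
The statement is explicitly flagged in the surrounding text as a corollary of \cite[Theorem~3.4]{GGComb}, so the plan is not to produce a new combinatorial argument but to verify that the parameters $W$ and $p$ appearing in the lemma are compatible with the hypotheses of the Gao--Geroldinger theorem, and then quote it. In particular, I would not attempt to re-derive or improve the underlying sumset estimate; the present lemma is designed purely to package their result in the form needed later in the paper.

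First I would restate \cite[Theorem~3.4]{GGComb} in a convenient normalisation, which gives, for an integer parameter, a lower bound on $|S|$ of the form (constant) $\cdot p / W$ and an upper bound on the maximum multiplicity of the form (constant) $\cdot p / W^2$ guaranteeing that $0 \in \Sigma(S)$; the guarantee is valid as soon as $p$ exceeds a quantity quadratic in that parameter. I would then identify the parameter with our $W$, observe that $|S|\geq p/(4W)$ supplies the density hypothesis, that each multiplicity being at most $p/(40W^2)$ meets (or, given the slack between the constants $4$ and $40$, exceeds) the multiplicity hypothesis, and that $p\geq 64W^2$ provides the lower bound on $p$ required by the cited theorem. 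After these verifications a single invocation of \cite[Theorem~3.4]{GGComb} produces a non-empty zero-sum subset of $S$, which is precisely our conclusion.

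The only real obstacle here is bookkeeping: the parameters in \cite[Theorem~3.4]{GGComb} may be stated using a slightly different normalisation (for instance with a parameter $k$ controlling both the density and the multiplicity simultaneously, or with the multiplicity measured relative to $|S|$ rather than to $p$), so one has to translate carefully to confirm that the constants $4$, $40$, and $64$ chosen in the statement are genuinely admissible. No new combinatorial input is needed beyond the cited theorem, and in particular none of the auxiliary lemmas earlier in Section~2 (Cauchy--Davenport, Olson, Dias da Silva--Hamidoune, etc.) enter the argument.
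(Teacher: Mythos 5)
Your proposal matches the paper exactly: the paper gives no independent argument for Lemma~\ref{Lem:GGmult}, but simply records it as a consequence of \cite[Theorem~3.4]{GGComb}, which is precisely the citation-plus-parameter-check you describe. The only remaining work in either version is the routine translation of the constants $4$, $40$, $64$ into the normalisation of the cited theorem, which you correctly identify as bookkeeping rather than new content.
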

\begin{Lem}
\label{Lem:compactDiscrete}
Suppose that $A\subset \Z_p$ satisfies $A + [0,m] = \Z_p$.
Then there is a already subset $A' \subset A$ of cardinality
$|A'| \le 2\lceil \frac{p+1}{m+2}\rceil - 1$ satisfying
$A' + [0,m] = \Z_p$.
\end{Lem}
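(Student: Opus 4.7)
The plan is to pass to a \emph{minimal} subset: start from $A' = A$ and repeatedly remove any element whose removal preserves $A' + [0,m] = \Z_p$ until no further removal is possible. The condition $A' + [0,m] = \Z_p$ is equivalent to the statement that every cyclic gap between two consecutive elements of $A'$ has length at most $m + 1$; indeed, a gap of size $\ge m + 2$ would leave the point $a + m + 1$ uncovered (no $b \in A'$ has $a + m + 1 \in [b, b+m]$), while conversely every point lies in $[a, a + m]$ for the greatest $a \in A'$ cyclically preceding it.

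The key observation is that in a minimal such $A'$, any two cyclically consecutive gaps $g_i, g_{i+1}$ satisfy $g_i + g_{i+1} \ge m + 2$. Indeed, if $a^-, a, a^+$ are three cyclically consecutive elements of $A'$ and $a^+ - a^- = g_i + g_{i+1} \le m + 1$, then deleting $a$ would merge the two gaps flanking $a$ into a single gap of size still $\le m + 1$, so $A' \setminus \{a\}$ would continue to satisfy the covering property, contradicting minimality.

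Summing this inequality cyclically over all $k := |A'|$ consecutive pairs of gaps gives
\[
2p \;=\; \sum_{i=0}^{k-1}(g_i + g_{i+1}) \;\ge\; k(m+2),
\]
so $k \le 2p/(m+2)$. Writing $p = q(m+2) + r$ with $0 \le r \le m + 1$, one has $\lfloor 2p/(m+2) \rfloor \le 2q + 1$; and since $1 \le r + 1 \le m + 2$ we get $\lceil (p+1)/(m+2) \rceil = q + 1$, so $2\lceil (p+1)/(m+2) \rceil - 1 = 2q + 1 \ge k$, which is exactly the claimed bound. The main obstacle is really just spotting that in a minimal covering set it is the sum of two consecutive gaps (rather than a single gap) that is forced to be large; once this is extracted from minimality, everything else is a short arithmetic computation.
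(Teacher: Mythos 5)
Your argument is correct, but it reaches the bound by a different mechanism than the paper. The paper's proof is a greedy construction: after normalising $0\in A$, it repeatedly chooses the next element as far ahead as possible but within distance $m+1$; maximality forces any two consecutive greedy steps to advance by at least $m+2$, so the chain $a_1,\dots,a_{2k-1}$ with $k=\lceil\frac{p+1}{m+2}\rceil$ already wraps around. You instead pass to a minimal covering subset and double-count: minimality forces any two cyclically consecutive gaps to sum to at least $m+2$, and summing this over all $|A'|$ cyclic pairs gives $|A'|(m+2)\le 2p$, after which the ceiling arithmetic matches the stated bound. The underlying phenomenon is the same in both proofs (two consecutive gaps, or greedy steps, total at least $m+2$), but your version is an extremal argument with a cyclically symmetric count, while the paper's is constructive and exhibits the subset explicitly; the two are of essentially equal length and strength. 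One tiny point you may wish to note: when the minimal set has only one or two elements the phrase ``three cyclically consecutive elements'' degenerates ($a^-=a^+$, or no such triple at all), but the case $|A'|=2$ still goes through verbatim since deleting the middle element leaves a single full-circle gap $g_i+g_{i+1}=p$, and the case $|A'|=1$ satisfies the claimed bound trivially because $2\lceil\frac{p+1}{m+2}\rceil-1\ge 1$; so this is a cosmetic caveat rather than a gap.
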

\begin{proof}
Without loss we can assume $0 \in A$.
Then define a sequence $a_i \in \N$ as follows:
Set $a_1 = 0$, and choose $a_{i+1} \in a_i + \{1,\dots,m+1\}$
maximal such that $a_{i+1} \mod p \in A$ (which is possible by assumption).
For any $i$ we have $a_{i+2} - a_{i} \ge m + 2$, as
otherwise $a_{i+1} - a_{i}$ would not have been maximal,
so $a_{2k - 1} \ge (m+2)(k-1)$ for $k\ge 1$.
We set $k = \lceil \frac{p+1}{m+2}\rceil$
and $A' = \{a_1, \dots, a_{2k - 1}\}$. Then $A' + [0,m] = \Z_p$,
as
\[
a_{2k - 1} + m \ge (m+2)(\lceil \frac{p+1}{m+2}\rceil-1) + m\ge p - 1 .
\]
\end{proof}
The previous Lemma can be applied to give the following, which proves
to be useful if we have many different elements in $A$.
\begin{Lem}
\label{Lem:CompAppl}
Let $A \subset \Z_p^2$ be a subset,
and suppose that $B := \{(1, 0)^{m_1}, (0, 1)^{m_2}\} \subset A$.
Suppose moreover that we can partition $A\setminus B$ into
two sets $U, V$, such that $\Sigma(\pi_2(U)\cup \{1^{m_2}\})=\Z_p$, and
$|\Sigma(\pi_1(V))|> (2\lceil \frac{p+1}{m_2+2}\rceil - 1)\cdot(p-m_1-1)$. Then $A$ contains a zero-sum.
\end{Lem}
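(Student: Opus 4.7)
The plan is to assume that $A$ is zero-sum free and derive a contradiction with the hypothesis $|\Sigma(\pi_1(V))|>N(p-m_1-1)$, where $N:=2\lceil(p+1)/(m_2+2)\rceil-1$.

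First, the assumption $\Sigma(\pi_2(U)\cup\{1^{m_2}\})=\Z_p$ is exactly the statement that $\Sigma(\pi_2(U))+[0,m_2]=\Z_p$, so Lemma~\ref{Lem:compactDiscrete} provides a subset $A^*\subseteq\Sigma(\pi_2(U))$ of cardinality at most $N$ with $A^*+[0,m_2]=\Z_p$. For each $a\in A^*$ I would fix once and for all a subset $U_a\subseteq U$ with $\sum_{u\in U_a}\pi_2(u)=a$, and abbreviate $\alpha_a:=\sum_{u\in U_a}\pi_1(u)$.

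Next, for each $s\in\Sigma(\pi_1(V))$ I would pick a witness $V_s\subseteq V$ with $\sum_{v\in V_s}\pi_1(v)=s$, set $t_s:=\sum_{v\in V_s}\pi_2(v)$, and use the covering property of $A^*$ to select $a(s)\in A^*$ and $\ell(s)\in[0,m_2]$ with $a(s)+\ell(s)\equiv -t_s\pmod p$. The sub-multiset $V_s\cup U_{a(s)}\cup\{(0,1)^{\ell(s)}\}$ of $A$ then has vanishing second coordinate and first coordinate $s+\alpha_{a(s)}$. If $s+\alpha_{a(s)}\in[-m_1,0]\pmod p$, appending the appropriate $k\le m_1$ copies of $(1,0)$ from $B$ produces a non-empty zero-sum in $A$, a contradiction. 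Hence $s\in\Z_p\setminus(-\alpha_{a(s)}+[-m_1,0])$, a set of $p-m_1-1$ elements.

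Finally, I would partition $\Sigma(\pi_1(V))$ according to the value of $a(s)$. The block associated to a fixed $a\in A^*$ lies inside $\Z_p\setminus(-\alpha_a+[-m_1,0])$, so it has at most $p-m_1-1$ elements, and summing over the $|A^*|\le N$ classes yields $|\Sigma(\pi_1(V))|\le N(p-m_1-1)$, contradicting the hypothesis.

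The delicate point I expect to manage carefully is that the choices of $V_s$ and $a(s)$ must be made as functions of $s\in\Sigma(\pi_1(V))$ rather than as functions of the subset $V_s$ itself: this charges each value $s$ to a unique element of $A^*$, which is precisely what makes the union bound tight. A naive enumeration over all subsets $V'\subseteq V$ together with all $a\in A^*$ compatible with $\sigma_2(V')$ would only rule out elements lying in the intersection of the windows $-\alpha_a+[-m_1,0]$, giving a useless bound close to $p$.
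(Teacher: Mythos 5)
Your proposal is correct and follows essentially the same route as the paper: apply Lemma~\ref{Lem:compactDiscrete} to $\Sigma(\pi_2(U))$ to get a covering set of size at most $2\lceil\frac{p+1}{m_2+2}\rceil-1$, complete each subset sum of $V$ to second coordinate zero using that set plus copies of $(0,1)$, and conclude that avoiding the window of length $m_1+1$ in the first coordinate forces $|\Sigma(\pi_1(V))|\le(2\lceil\frac{p+1}{m_2+2}\rceil-1)(p-m_1-1)$. The bookkeeping you describe (charging each $s$ to a single element of the covering set) is just the paper's union bound $\pi_1(\Sigma(V))\subseteq[1,p-m_1-1]-\pi_1(W)$ phrased differently, so there is no substantive difference.
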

\begin{proof}
Applying Lemma~\ref{Lem:compactDiscrete} to
$\Sigma(\pi_2(U))$ (with $m = m_2$) yields a
set $W \subset \Sigma(\pi_2(U))$ with $W + \Sigma(\{(0, 1)^{m_2}\}) = \Z_p$
and $|W| \le 2\lceil \frac{p+1}{m_2+2}\rceil - 1$.
Then for each element
$s\in\Sigma(V)$ there is some index $w \in W$, such that
$\pi_2(s+w)\in[n-m_2, n]$. Hence, we either obtain a zero-sum, or
$\pi_1(s+w)\in[1, p - m_1 - 1]$. If this holds true for all $s\in\Sigma(V)$,
then $\pi_1(\Sigma(V))\subseteq[1, p - m_1 - 1]-\pi_1(W)$.
However, the right hand set contains at most
$(2\lceil \frac{p+1}{m_2+2}\rceil - 1)\cdot (p - m_1 - 1)$ elements,
hence our claim follows.
\end{proof}

\section{The two largest multiplicities of a zero-sum free set in
  $\Z_p^2$} 
\label{sect:2MaxMult}

In this section, we prove \refmm{2max}.

Let $m_1, m_2$ be the two largest multiplicities, and set
$k_i=p-m_i$. We do not assume $m_1\geq m_2$ in this section, in this
way we obtain more symmetry.

We will repeatedly use the
following argument, which for the sake of future citation we formulate
as a lemma. 
\begin{Lem}
\label{Lem:replace}
Let $A$ be a zero-sum free set, $E\subset A$,
and suppose that $\sum_{e\in E}e=k\cdot a$ for some $a \in \Z_p^2$
and some $k \in \N$.
\begin{enumerate}
\item
If $\{a^{k-1}\} \subseteq A\setminus E$, then 
$A\cup\{a^k\}\setminus E$ is zero-sum free.
\item
If $|A| = 2p-2$ and $\{a^{\min(k-1,\lceil p/2 \rceil - 1)}\} \subseteq A\setminus E$
then $|E| \ge k$.
\end{enumerate}
\end{Lem}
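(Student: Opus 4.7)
The plan is to prove part (1) by a direct case analysis and to deduce part (2) from (1) together with the Davenport bound $D(\Z_p^2) = 2p - 1$.

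For (1), assume for contradiction that $A' := (A\setminus E) \cup \{a^k\}$ (with the adjoined $k$ copies labelled ``new'' for bookkeeping) contains a non-empty zero-sum subset $S$. Let $j$ be the number of new copies of $a$ used by $S$, and let $s^*$ be the number of copies of $a$ in $S \cap (A \setminus E)$; write $s = j + s^*$. If $j = 0$ then $S \subseteq A$ already contradicts zero-sum freeness, so assume $j \geq 1$ and split according to $s$. If $s \leq k - 1$, the hypothesis $\{a^{k-1}\} \subseteq A \setminus E$ leaves at least $(k-1) - s^* \geq j$ copies of $a$ in $(A \setminus E) \setminus S$; swapping the $j$ new copies in $S$ for $j$ such unused old copies yields a zero-sum in $A$.

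If instead $s \geq k$, consider $S^{\mathrm{new}} := (S \cap (A \setminus E)) \cup E \subseteq A$. Since $\sum(S \cap (A \setminus E)) = -ja$, we get $\sum S^{\mathrm{new}} = (k-j)a$, and $S^{\mathrm{new}}$ contains $s^* \geq k - j$ copies of $a$. Removing $k - j$ of these leaves a subset of $A$ summing to $0$; it is non-empty because the contribution of $E$ to $|S^{\mathrm{new}}|$ is at least $1$. Either way we produce a zero-sum in $A$, contradicting the hypothesis.

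For (2), suppose $|E| \leq k - 1$. If $k \leq \lceil p/2 \rceil$ the hypothesis supplies the $k-1$ copies of $a$ required by (1); applying (1) yields a zero-sum free set of cardinality $(2p-2) - |E| + k \geq 2p - 1 = D(\Z_p^2)$, a contradiction. If $k > \lceil p/2 \rceil$, then $p - k \leq \lceil p/2 \rceil - 1$, so the hypothesis provides at least $p - k$ copies of $a$ in $A \setminus E$; appending $p - k$ such copies to $E$ produces a non-empty subset of $A$ summing to $ka + (p-k)a = 0$, again contradicting the zero-sum freeness of $A$.

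The delicate point is the case $s \geq k$ in (1): a naive attempt to replace the $j$ new copies with old copies runs out of room, and one must instead exploit the defining equation $\sum E = k a$, substituting $E$ back into the zero-sum and then pruning the surplus copies of $a$.
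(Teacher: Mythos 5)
Your proof is correct and takes essentially the same approach as the paper: your case analysis in (1) (few new copies of $a$ get swapped for the spare old ones, many get replaced by substituting $E$ and pruning surplus copies) is exactly the paper's sumset identity $\Sigma(E)+\Sigma(\{a^{k-1}\})\supseteq\{0,k\cdot a\}+\Sigma(\{a^{k-1}\})=\Sigma(\{a^{2k-1}\})$ unpacked element by element. Part (2) — the dichotomy $k\le\lceil p/2\rceil$, handled via (1) together with $D(\Z_p^2)=2p-1$, versus $k>\lceil p/2\rceil$, handled via the zero-sum $E\cup\{a^{p-k}\}\subseteq A$ — coincides with the paper's argument (both proofs tacitly use $E\neq\emptyset$, which holds in every application).
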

\begin{proof}
(1)
Write $A=A_1\cup E\cup \{a^{k-1}\}$. Then we have
\begin{eqnarray*}
\Sigma(A) & = & \Sigma(A_1) + \Sigma(E) + \Sigma(\{a^{k-1}\})\\
 & \supseteq & \Sigma(A_1) + \{0, k\cdot a\} + \Sigma(\{a^{k-1}\})\\
 & = & \Sigma(A_1) + \Sigma(\{a^{2k-1}\})\\
 & = & \Sigma(A_1\cup\{a^{2k-1}\}).
\end{eqnarray*}
Hence, $\Sigma(A)\supseteq\Sigma(A\cup\{a^k\}\setminus E)$, and since
the larger set does not contain 0, the same holds true for the smaller
one.

(2)
If $\{a^{k-1}\} \subseteq A\setminus E$ this follows from the first part.
Otherwise $k-1 > \lceil p/2 \rceil - 1$ and
$\{a^{\lceil p/2 \rceil - 1} \} \subseteq A\setminus E$.
But then $E \cup \{a^{p-k}\}$, which has sum zero, is a subset of $A$:
\[
p-k \le p - \lceil p/2 \rceil - 1 \le \lceil n/2 \rceil - 1.
\]
\end{proof}

We now fix coordinates in such a way that $(1, 0)$ occurs with
multiplicity $m_1$, and $(0, 1)$ with multiplicity $m_2$ in $A$.
Note that in particular, by Lemma~\ref{Lem:Gaop} $A$ does not contain
any element $(k,0)$ or $(0,k)$ for $k \ne 1$.

Denote by $\pi_1$ the projection onto $\langle(1, 0)\rangle$
and by $\pi_2$ the
projection onto $\langle(0, 1)\rangle$.

\begin{Lem}
\label{Lem:2n-9Diff}
Suppose we have $B=\{a_1, \ldots, a_k, b_1, \ldots, b_\ell\}\subset
A\setminus \{(1,0)^{m_1}\}$
for some $k,\ell \ge 1$, with $y=\pi_2(\sum_i a_i)=\pi_2(\sum_i b_i)$.
If $-y\in\Sigma(\pi_2(A\setminus B))$, 
then we have $|\sum_i \pi_1(a_i)-\sum_i\pi_1(b_i)|\le n-m_1-2$.
In particular, this is true if $k+\ell\leq p-m_1-1$.
The same is true with coordinates exchanged.
\end{Lem}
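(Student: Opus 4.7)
The plan is to exploit the shared second coordinate of $\sum_i a_i$ and $\sum_i b_i$ by building two subsets of $A$ whose sums lie in the subgroup $\langle(1,0)\rangle$, and then to bound both resulting multiples of $(1,0)$ using the abundance of $(1,0)$'s already available in $A$.

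First I would use the hypothesis $-y\in\Sigma(\pi_2(A\setminus B))$ to pick $C\subseteq A\setminus B$ with $\pi_2\bigl(\sum_{c\in C}c\bigr)=-y$. Since $\pi_2((1,0))=0$, any copies of $(1,0)$ in $C$ contribute nothing and may be dropped, so I may assume $C\cap\{(1,0)^{m_1}\}=\emptyset$. Setting $E_a:=\{a_1,\dots,a_k\}\cup C$ and $E_b:=\{b_1,\dots,b_\ell\}\cup C$, both are nonempty subsets of the zero-sum free set $A$ with $\pi_2(\sum E_a)=\pi_2(\sum E_b)=0$, so I may write $\sum E_a=m_a\cdot(1,0)$ and $\sum E_b=m_b\cdot(1,0)$ for uniquely determined $m_a,m_b\in\{1,\dots,p-1\}$ (nonzero by zero-sum freeness). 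The key observation is then $m_a,m_b\le p-m_1-1$: otherwise, since $E_a$ is disjoint from the $m_1$ copies of $(1,0)$ in $A$, one could adjoin $p-m_a\le m_1$ such copies to $E_a$ to obtain a zero-sum in $A$, contradicting zero-sum freeness (this is Lemma~\ref{Lem:replace}(2) specialised to $a=(1,0)$). Because the two $C$'s cancel in the difference, $\pi_1(\sum_i a_i)-\pi_1(\sum_i b_i)\equiv m_a-m_b\pmod{p}$, and $m_a,m_b\in\{1,\dots,p-m_1-1\}$ forces the integer $m_a-m_b$ to lie in $[-(p-m_1-2),\,p-m_1-2]$; hence its class in $\Z_p$ has minimal-absolute-value representative bounded by $p-m_1-2$, which is the required inequality.

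For the ``in particular'' clause I would show that the side hypothesis is automatic when $k+\ell\le p-m_1-1$: in that case $|A\setminus B|\ge p+m_1-1$, so removing the $m_1$ copies of $(1,0)$ leaves at least $p-1$ elements, each having nonzero $\pi_2$-image by Lemma~\ref{Lem:Gaop}. An iterated Cauchy--Davenport in the form of Corollary~\ref{Cor:CD}, applied by grouping these $\pi_2$-images according to their distinct values $t\ne 0$ of multiplicity $d$ (each contributing $|\Sigma(\{t^d\})|-1=\min(d,p-1)$ to the relevant sum), then forces $\Sigma(\pi_2(A\setminus B))=\Z_p$, so in particular $-y$ is a subset sum. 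The coordinate-exchanged version is immediate by symmetry, and the only subtle point is the bookkeeping needed to guarantee that $C$ does not consume the $(1,0)$'s on which the contradiction relies.
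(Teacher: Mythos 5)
Your proof is correct and follows essentially the same route as the paper's: pick a subset of $A\setminus B$ (avoiding the copies of $(1,0)$) whose $\pi_2$-sum is $-y$, observe that adjoining it to either $\{a_i\}$ or $\{b_i\}$ gives sums in $\langle(1,0)\rangle$ that zero-sum freeness confines to an interval of $p-m_1-1$ residues, and deduce the bound on the difference; the ``in particular'' clause follows from the same counting, where you make explicit (via Corollary~\ref{Cor:CD} and Lemma~\ref{Lem:Gaop}) the step the paper only asserts.
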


\begin{proof}
Let $c$ be a sum of elements of $A\setminus (B\cup \{(1,0)^{m_1}\})$
with $\pi_2(c) = -y$.
Then $c + \sum_i a_i$ and $c +\sum_i b_i$ both are of the form $(x,0)$.
Such elements can be completed to a zero-sum by copies of $(1,0)$
unless $m_1 < x < n$. The statement follows.

If $k+\ell\leq n-m_1-1$, then $|A\setminus (B\cup \{(1,0)^{m_1}\})| \ge n-1$,
so $\Sigma(\pi_2(A\setminus B))$ contains the whole of $\langle(0,1)\rangle$.
\end{proof}

Our argument will have a recursive structure. For $k_1, k_2\geq 3$ denote
by $B(p, k_1, k_2)$ the statement that there does not exist a zero-sum
free set $A\subseteq\Z_p^2$ with $|A|=2p-2$ and maximal multiplicities
$p-k_1$, $p-k_2$. Note that this statement is false, if one of $k_1,
k_2$ equals 1 or 2, while it is trivially true if one of $k_1, k_2$ is
$\leq 0$. When proving $B(p, k_1, k_2)$ for some pair $(k_1, k_2)$, we
may assume that this statement is already proven for all pairs $(k_1',
k_2')$ with $k_1+k_2>k_1'+k_2'$, such that none of $k_1', k_2'$ equals
1 or 2.

\begin{Lem}
\label{Lem:2n-9locate}
Let $A\subseteq\Z_p^2$ be a zero-sum free set with $|A|=2p-2$, and
suppose that $A$ contains elements with multiplicities $p-k_1$,
$p-k_2$, where $3 \le k_1, k_2\leq p/3$. Then all elements of $A$ different
from $(1, 0)$ and $(0, 1)$ are of the form $(x,y)$ with $1\leq x\leq k_1-2$,
$1\leq y\leq k_2-2$, the form $(p-x, y)$ with $1\leq x\leq k_1-2$,
$1\leq y\leq k_2-1$, or of the form $(x, p-y)$ with $1\leq x\leq k_1-1$,
$1\leq y\leq k_2-2$.
\end{Lem}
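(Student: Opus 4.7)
The plan is to combine the elementary ``complete to zero with $(1,0)$'s and $(0,1)$'s'' observation with two applications of Lemma~\ref{Lem:2n-9Diff}. Set $T := A \setminus \{(1,0)^{m_1}, (0,1)^{m_2}\}$, so that $|T| = k_1 + k_2 - 2$ and, by Lemma~\ref{Lem:Gaop}, every $(x,y) \in T$ satisfies $x, y \ne 0$ in $\Z_p$. Given $(x, y) \in T$, the only way to complete it to zero using only copies of $(1,0)$ and $(0,1)$ is to add $\ii(-x)$ of the former and $\ii(-y)$ of the latter; zero-sum freeness therefore rules out $\ii(-x) \le m_1$ together with $\ii(-y) \le m_2$, so for every $(x, y) \in T$ at least one of $\ii(x) \in [1, k_1 - 1]$ (``$x$ is SP'') and $\ii(y) \in [1, k_2 - 1]$ (``$y$ is SP'') holds.

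The sharpening is the following. When $y$ is SP, take $B := \{(x, y)\} \cup \{(0, 1)^y\} \subset A \setminus \{(1, 0)^{m_1}\}$, whose two halves have matching $\pi_2$-sum $y$; Lemma~\ref{Lem:2n-9Diff} then yields $|x - 0| = |x| \le k_1 - 2$, forcing $\ii(x) \in [1, k_1 - 2] \cup [p - k_1 + 2, p - 1]$ (using $x \ne 0$), provided its hypothesis $-y \in \Sigma(\pi_2(A \setminus B))$ holds. The coordinate-exchanged statement of the same lemma, applied with $B = \{(x, y)\} \cup \{(1, 0)^x\}$ when $x$ is SP, symmetrically gives $\ii(y) \in [1, k_2 - 2] \cup [p - k_2 + 2, p - 1]$. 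The three cases of the lemma now assemble themselves from which of $x, y$ are SP: both SP yields case~1 with $x \le k_1 - 2$ and $y \le k_2 - 2$; only $x$ SP (so $\ii(y) \in [k_2, p-1]$) intersects to case~3 with $\ii(y) \in [p - k_2 + 2, p - 1]$ and $x \in [1, k_1 - 1]$; only $y$ SP gives case~2 by symmetry.

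The one genuinely nontrivial step is verifying the hypothesis $-y \in \Sigma(\pi_2(A \setminus B))$. The ``in particular'' clause of Lemma~\ref{Lem:2n-9Diff} handles this automatically whenever $|B| = 1 + y \le k_1 - 1$, i.e.\ for $y \le k_1 - 2$, which covers the full range $y \in [1, k_2 - 1]$ as soon as $k_1 > k_2$; the residual window $y \in [k_1 - 1, k_2 - 1]$ (and its mirror image in the other coordinate) must be treated by hand, showing that $p - y$ can be realized as a sum of the $m_2 - y$ remaining copies of $(0, 1)$ together with one or two elements of $T \setminus \{(x, y)\}$ of suitable $\pi_2$-value. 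This amounts to a finite combinatorial check once the basic range restrictions on $\pi_2(T)$ are in place, and is where the main effort of the proof lies; a fallback is to replace the $(0,1)^y$ half of $B$ by a more efficient combination of $T$-elements having $\pi_2$-sum $y$, which cuts $|B|$ below $k_1 - 1$ and restores the ``in particular'' regime.
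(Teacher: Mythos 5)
Your setup is the same as the paper's: the trivial ``complete with $(1,0)$'s and $(0,1)$'s'' observation, and then the application of Lemma~\ref{Lem:2n-9Diff} to the pair of representations $y\cdot(0,1)$ and $(x,y)$, which gives $|x|\le k_1-2$ \emph{provided} $-y\in\Sigma(\pi_2(A\setminus\{(0,1)^y,(x,y)\}))$; the assembly of the three cases from the two symmetric claims is also exactly how the paper concludes. The problem is that you have only proved the lemma in the regime where the ``in particular'' clause applies ($y\le k_1-2$), and you explicitly defer the remaining case. That remaining case is not a small residual issue: since the hypotheses allow $k_1=3$ and $k_2$ up to $p/3$, the window $y\in[k_1-1,k_2-1]$ can contain on the order of $p/3$ values, and handling it is where essentially the entire content of the paper's proof lies (about a page of argument).

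Concretely, your two suggested remedies do not close the gap. The ``fallback'' of replacing $(0,1)^y$ by a shorter combination of $T$-elements with $\pi_2$-sum $y$ can simply be unavailable: the dangerous configurations are precisely those in which every element of $A$ other than $(1,0)$ has $\pi_2$-coordinate equal to $1$ or within $y-1$ of $p$ (so no representation of $-y$, and no combination shorter than $y$ elements, exists). And the claim that the bad case ``amounts to a finite combinatorial check'' is not right: the configurations to be excluded form infinite families depending on $p,k_1,k_2,y$ (e.g.\ all elements of small $\pi_2$ equal to $(1,1)$, or all equal to $(1,-1)$, with a few exceptional elements $b_0,c_0$ of $\pi_1$- or $\pi_2$-coordinate $2$). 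The paper rules these out by a genuine argument: it introduces the sets $B$ (elements with $\ii(\pi_2(a))>p-y$) and $C$ (elements with $\ii(\pi_2(a))<k_2$), proves $|B|\le y-1$, $|C|\le k_2-1$, $|B|\le k_1-1$, $|C|\ge k_2-2$, pins down the possible shapes of $B$ and $C$, and then constructs explicit zero-sums case by case, using Lemma~\ref{Lem:replace} and, in the final subcase, an auxiliary zero-sum free set with an element of multiplicity $p-1$. None of this is present in your proposal, so as it stands the proof covers only the easy half of the statement.
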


\begin{proof}
Suppose that $(x, y)\in A$ with $1\le y<k_2$. Our aim is to show that
$|x|\leq k_1-2$. (Together with the same argument with coordinates exchanged,
this implies the lemma.) We apply
Lemma~\ref{Lem:2n-9Diff} to the sum 
$\pi_2(y\cdot(0, 1))=\pi_2((x, y))$, and deduce that
\[
|x| = \left|\pi_1((x, y)) - \sum_{i=1}^x \pi_1((0, 1))\right| \leq k_1-2,
\]
provided that $-y\in\Sigma(\pi_2(A\setminus\{(0, 1)^y, (x, y)\}))$.
Hence, from now on we assume that this is not the case.

If there were an element $a\in A$ with $k_2\leq
\ii(\pi_2(a))\leq p-y$, then this element together with $(0,
1)^{p-k_2-y}$ would represent $-y$, hence, there is no element in this
range. Denote by $B$ the set of all $a\in A\setminus\{(1, 0)^{p-k_1}, (0,
1)^{p-k_2}, (x, y)\}$ 
with $\ii(\pi_2(a))>p-y$, and by $C$ the set of all $a\in A\setminus\{(1, 0)^{p-k_1}, (0,
1)^{p-k_2}, (x, y)\}$ with $i(\pi_2(a))<k_2$. Then $-y$ is representable as
subsum of $\pi_2(B)$ together with a certain multiple of $(0, 1)$, if
$\sum_{b\in B} p-i(\pi_2(b)) \geq y$, and $-y$ is representable as
subsum of $\pi_2(C)$ together with a certain multiple of $(0, 1)$, if
$\sum_{c\in C} i(\pi_2(c)) \geq k_2$; in particular $|B| \le y - 1$
and $|C| \le k_{2}-1$.

We now form the sum $s$ of all elements in $B$. Then we have
$n-\ii(\pi_2(s))=\sum_{b\in B}(n-\ii(\pi_2(b))\leq y-1\leq n/3$, hence,
if $\sum_{b\in B}\ii(\pi_1(b))\geq k_1$, we can add a certain multiple
of $(1, 0)$ and $(0, 1)$ to $s$ and obtain a zero-sum. In particular,
$|B|\leq k_1-1$.

This
implies $|C|\geq k_2-2$, as $|B| + |C| = k_{1} + k_{2} - 3$. Since
$\sum_{c\in C}\ii(\pi_2(c))\leq k_2-1$, we deduce that $C$ contains at
most one element $c_0$ with $\pi_2(c_0)\neq 1$, and, if it exists,
this element satisfies $\pi_2(c_0)=2$.

Similarly, $|C|\leq k_2-1$ implies
$|B|\geq k_1-2 \ge 1$, and therefore $B$ contains at most one element
$b_0$ with $\pi_1(b_0)\neq 1$, and this element satisfies
$\pi_1(b_0)=2$.

In particular, $B$ and $C$ are both non-empty.

Suppose there exist elements $b\in B$, $c\in C$ with $b\neq b_0$,
$c\neq c_0$. Then $b+c$ can be combined with certain multiples of $(1,
0)$ and $(0, 1)$ to a zero-sum, unless $\pi_1(c)\in[1, k_1-2]$. 

Consider again the sum $s$ of all elements in $B$. This sum satisfies
$\pi_1(s)\in\{k_1-2, k_1-1\}$, $\pi_2(s)\in[n-t+1, n-|B|]$. Hence,
adding $c$ we obtain a zero-sum, unless $\pi_1(c)=1$ and
$\pi_1(s)=k_1-2$. In particular, $|B|=k_1-2$, $|C|=k_1-1$, and $b_0,
c_0$ do not exist, that is, all elements in $C$ are equal to $(1,
1)$. If $x\in[p-|C|, p]$, we add $p-x$ copies of $(1, 1)$ to $(x, y)$
to obtain $(0, p+y-x)$ as the sum of $p-x+1$ elements. Hence, we can
replace $p-x+1$ elements of $A$ by $p-x+y$ copies of $(0, 1)$, which
gives a zero-sum, unless $y=1$, which is impossible, since $|B|\leq
y-1$. If $x\not\in[p-|C|, p]$, we add all copies of $(1, 1)$ to $(x,
y)$ and obtain an element $s$ with $\pi_2(s)\in[k_1-1+|C|,
p]\subseteq[k_1, p]$, $\pi_1(s)\in[y+k_2-1, p]\subseteq[k_2, p]$,
hence, $s$ can be combined with a certain number of copies of $(1, 0)$
and $(0, 1)$ to a zero-sum. Thus, the assumption that both $B$ and $C$
contain elements different from $b_0, c_0$ was wrong.

Suppose that $C=\{c_0\}$. Then $k_2=3$ and $|B|=k_1-1$, therefore
$k_1=|B|+1\leq y\leq k_2-1=2$, contrary to the assumption $k_1\geq
3$. If $B=\{b_0\}$, then $k_1=3$, $|C|=k_2-1$, and all elements in
$C$ satisfy $\pi_2(c)=1$. If $C$ contains an element different from
$(1, -1)$, we add this element to $b_0$ and obtain a zero-sum, hence,
$C=\{(1, -1)^{k_2-1}\}$. If $\pi_2(b_0)\neq -1$, consider $b_0+2(1,
-1)$. This element can be combined with a certain multiple of $(0, 1)$
to a zero-sum. If $\pi_2(b_0)=-1$, we can replace $b_0$ and one copy
of $(0, 1)$ by 2 copies of $(1, 0)$, hence, we obtain a zero-sum free
set $A'$ of cardinality $2p-2$ containing an element of multiplicity $p-1$,
that is, all elements of $A$ different from $b_0$ and $(1, 0)$ are of
the form $(u, 1)$, in particular, $y=1$. But then $-y=\pi_2(b_0)$ is
representable, and the proof is complete.
\end{proof}

\begin{figure}
\begingroup
\psset{unit=2.5mm,dimen=middle,linewidth=0.1mm,hatchwidth=0.1mm}
\SpecialCoor
\def\gr{20 }   
\def\kx{5 }    
\def\ky{7 }    

\def\kxm{\kx 1 sub }
\def\kym{\ky 1 sub }
\def\kxr{\gr \kx sub 2 add }
\def\kyr{\gr \ky sub 2 add }

\def\xmark#1#2{\rput(! #1 0.5 add 0){\psline(0,-.3)(0,0)\rput[t](0,-.4){$\scriptstyle{#2}$}}}
\def\ymark#1#2{\rput(! 0 #1 0.5 add){\psline(-.3,0)(0,0)\rput[r](-.4,0){$\scriptstyle{#2}$}}}
\def\xmarkt#1#2{\rput(! #1 0.5 add \gr){\psline(0,.3)(0,0)\rput[b](0,.4){$\scriptstyle{#2}$}}}
\def\ymarkr#1#2{\rput(! \gr #1 0.5 add){\psline(.3,0)(0,0)\rput[l](.4,0){$\scriptstyle{#2}$}}}
\makeatletter
\def\comm(#1){\@ifnextchar({\comm@(#1)}{\comm@(#1)(#1 exch pop -2 exch)}}
\def\commr(#1){\@ifnextchar({\comm@r(#1)}{\comm@r(#1)(#1 exch pop \gr 2 add exch)}}
\def\comm@(#1)(#2)#3{\psline(! #1)(! #2)\rput[r](! #2 exch 0.1 sub exch){#3\strut}}
\def\comm@r(#1)(#2)#3{\psline(! #1)(! #2)\rput[l](! #2 exch 0.1 add exch){#3\strut}}
\makeatother
\begin{pspicture}(-3,-1.2)(22.2,21)
\psframe[linewidth=0.2mm](0,0)(\gr,\gr)
\psline(0,1)(\gr,1)
\psline(1,0)(1,\gr)
{\psset{fillstyle=hlines,hatchsep=1.2mm}
  \psframe(2,0)(! \kx 1)
  \psframe(0,2)(! 1 \ky)
  \pspolygon(! 1 \kym)(! \kxm \kym)(! \kxm 1)(! \kxr 1)(! \kxr \ky)(! \kx \ky)(! \kx \kyr)(! 1 \kyr)
  \pspolygon(! \gr 2)(! \gr 1 sub 2)(! \gr 1 sub 3)(! \gr 2 sub 3)%
            (! \gr 2 sub 4)(! \gr 3 sub 4)(! \kxr \ky)(! \gr \ky)
  \pspolygon(! 2 \gr)(! 2 \gr 1 sub)(! 3 \gr 1 sub)(! 3 \gr 2 sub)%
            (! 4 \gr 2 sub)(! 4 \gr 3 sub)(! 5 \gr 3 sub)(! \kx \gr)
  \psframe(! \kx 1 sub \kyr)(! \kx \kxr)
}
{\psset{fillstyle=hlines,hatchsep=.6mm}
  \psframe(! \kx \ky)(\gr,\gr)
  \psframe(! 0 \ky)(1,\gr)
  \psframe(! \kx 0)(\gr,1)
  \psframe(0,0)(1,1)
}
\pscircle*(1.5,0.5){0.2}\comm(1.5 0.5)(.5 -1){$\scriptstyle{m_1}$}
\pscircle*(0.5,1.5){0.2}\comm(0.5 1.5)(-1 .5){$\scriptstyle{m_2}$}

\comm(3 10){{\scriptsize No element by Lemma~\ref{Lem:2n-9locate}}}
\comm(\kx 1 sub \gr 1.5 sub){{\scriptsize No element by Lemma~\ref{Lem:stairs}}}
\commr(\gr 1.5 sub \kym 1 sub){{\scriptsize No element by Lemma~\ref{Lem:stairs}}}
\comm(\kx 0.5 sub \kxr 0.5 sub){{\scriptsize No element by Lemma~\ref{Lem:symmetry}}}
\rput*(13,13){\scriptsize{No sum}}
\xmark{\kxm 1 sub }{k_1-2}
\ymark{\kym 1 sub }{k_2-2}
\xmark{\kxr }{p-k_1+2}
\ymark{\kyr }{p-k_2+2}
\xmarkt{0}{0}
\ymarkr{0}{0}
\xmarkt{2}{2}
\ymarkr{2}{2}
\xmarkt{\kx }{k_1}
\ymarkr{\ky }{k_2}
\xmarkt{\gr 1 sub }{p-1}
\ymarkr{\gr 1 sub }{p-1}
\rput(2.5,3.5){$B$}
\rput(18,2){$C$}
\rput(2,17.5){$D$}
\end{pspicture}
\endgroup
\caption{What we know about $A \setminus \{(1,0)^{m_1},(0,1)^{m_2}\}$}
\label{fig:BCD}
\end{figure}

Note that these three rectangles are disjoint. From now on we will
denote the set of points in $A\setminus\{(1,0)^{m_1},(0,1)^{m_2}\}$
of the form $(x, y)$ by $B$, the set
of points of the form $(p-x, y)$ by $C$, and the set of points of the
form $(x, p-y)$ by $D$ ($x< k_1, y< k_2$).

Our next result further restricts elements in $C$ and $D$. At this
place we use the induction on $k_1, k_2$ for the first time.

\begin{Lem}
\label{Lem:stairs}
Let $A\subseteq\Z_p^2$ be a zero-sum free set with $|A|=2p-2$, and
suppose that $(1, 0)$, $(0, 1)$ are the elements with highest
multiplicity $p-k_1$, $p-k_2$, respectively. Let
$A\setminus\{(1,0)^{m_1},(0,1)^{m_2}\} = B \cup C \cup D$ be the decomposition
as above. Then $C$ does not contain an element $(p-x, y)$ with $x<y$,
and $D$ does not contain an element $(x, p-y)$ with $y<x$.
\end{Lem}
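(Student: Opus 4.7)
Suppose, for contradiction, that some $(p-x, y)\in C$ satisfies $x < y$; by Lemma~\ref{Lem:2n-9locate} we then have $1\le x\le k_1-2$ and $1\le y\le k_2-1$. The plan is to apply the replacement lemma to shift this ``excess mass'' onto $(0,1)$, thereby producing a zero-sum-free set to which either Davenport's bound or the induction hypothesis can be applied.

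Set $E := \{(p-x, y)\}\cup\{(1,0)^x\}\subseteq A$, which is legitimate since $x\le k_1-2\le m_1$. Then $\sum_{e\in E} e = (0, y) = y\cdot(0,1)$, and $(0,1)^{y-1}\subseteq A\setminus E$ because $y-1\le k_2-2\le m_2$. Applying Lemma~\ref{Lem:replace}(1) with $a=(0,1)$ and $k=y$ yields a zero-sum-free multiset
\[
A' := (A\setminus E)\cup\{(0,1)^y\}
\]
of cardinality $|A'| = 2p-2 + (y-x-1)$. If $y\ge x+2$, then $|A'|\ge 2p-1 = D(\Z_p^2)$, an immediate contradiction.

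Hence $y=x+1$, so $|A'|=2p-2$. In $A'$ the elements $(0,1)$ and $(1,0)$ have multiplicities $m_2+y$ and $m_1-x$ respectively, while every other element retains its multiplicity ($\le m_2$); so $(0,1)$ and $(1,0)$ remain the two largest-multiplicity elements of $A'$. Writing $k_1' := k_2-x-1$ and $k_2' := k_1+x$, we have $k_1'+k_2' = k_1+k_2-1$ and $k_2'\ge k_1+1\ge 4$. Whenever $k_1'\ge 3$, i.e.\ $x\le k_2-4$, the induction hypothesis $B(p, k_1', k_2')$ directly contradicts the existence of $A'$.

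The main obstacle is the edge cases $x\in\{k_2-3,\ k_2-2\}$, where $k_1'\in\{1,2\}$ falls outside the induction hypothesis. Here $A'$ contains $(0,1)$ with multiplicity $\ge p-2$. In the most extreme case $x=k_2-2$, $(0,1)$ has multiplicity exactly $p-1$ in $A'$: then by Lemma~\ref{Lem:Onedim}(1) the $\pi_1$-projections of the remaining $p-1$ elements of $A'$ form a zero-sum-free sequence of length $p-1$ in $\Z_p$ and must therefore all be equal, and the presence of $(1,0)$ in $A'$ (which holds since $m_1-x\ge 1$) forces this common value to be $1$. Translating back, every non-$(0,1)$ element of $A$ other than $(p-x, y)$ itself must have first coordinate $1$; this rigid constraint leaves enough room to assemble an explicit zero-sum in $A$ from $(p-x, y)$, copies of $(1,0)$ and $(0,1)$, and a carefully chosen subset of the elements of the form $(1, b_j)$. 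The case $x=k_2-3$ is treated analogously, with a slightly more delicate structural analysis for sequences of length $p-2$. The corresponding statement for $D$ follows by interchanging the two coordinate axes.
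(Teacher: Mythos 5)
Your opening moves coincide with the paper's: you apply Lemma~\ref{Lem:replace} to $E=\{(p-x,y),(1,0)^x\}$, dispose of $y\ge x+2$ via the Davenport constant, and for $y=x+1$ invoke the induction on $k_1+k_2$ whenever the new parameter $k_2-y$ is at least $3$. Up to that point the argument is correct (modulo the small point that one should say a word about why $(1,0)$, with its reduced multiplicity $m_1-x$, is still the second-highest multiplicity of $A'$ -- though the paper is equally terse there).

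The genuine gap is in the boundary cases $k_2-y\in\{1,2\}$, i.e.\ your $x\in\{k_2-3,k_2-2\}$. There you correctly extract the coset structure (every element of $A$ other than $(p-x,y)$ has first coordinate $0$ or $1$), but then you merely assert that this ``leaves enough room to assemble an explicit zero-sum \dots\ from a carefully chosen subset of the elements of the form $(1,b_j)$,'' and for $x=k_2-3$ you defer to an unspecified ``analogous'' analysis. This is exactly where the real work lies, and it is not routine: the configuration $\{(1,0)^{m_1},(0,1)^{m_2},(p-y+1,y)\}\cup\{(1,b_j)\}$ is close to the extremal zero-sum free sets, so the existence of a zero-sum has to be argued carefully. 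The paper spends the second half of its proof on precisely this. First it shows that no element $(1,z)$ with small second coordinate (the set $B$) can occur, by a second application of Lemma~\ref{Lem:replace} to $E=\{(p-y+1,y),(1,z),(1,0)^{y-2}\}$, whose sum is $(y+z)\cdot(0,1)$, so that the replacement would produce a zero-sum free multiset of cardinality $2p-2+z>2p-2$. Then, with all remaining elements lying in $D$, i.e.\ of the form $(1,p-y')$, it sums $k_1$ of them (possible since $|D|\ge k_1+k_2-3\ge k_1$), observes that failure to complete to a zero-sum forces $\pi_2$ of that sum into $[1,k_2-1]$, deduces from this an averaging bound on the quantities $p-\pi_2(d)$, and uses it to select a subset $D'\subseteq D$ whose sum $s$ satisfies $\pi_1(s)\in[1,y/2]$ and $\pi_2(s)\in[k_2,p-y]$, so that $s+(p-y+1,y)$ completes to a zero-sum with copies of $(1,0)$ and $(0,1)$. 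None of this (nor any substitute for it) appears in your sketch, so as it stands the proof covers only the cases already handled by the induction hypothesis and leaves the critical edge cases unproven.
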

\begin{proof}
Suppose that $(p-x, y)\in C$ with $y>x$. 
Apply Lemma~\ref{Lem:replace} to $E:= \{(p-x, y), (1,0)^x\}$.
We conclude that the set
$A^* = A \setminus E \cup \{(0,1)^y\}$ is zero-sum free. If $y>x+1$,
then $|A^*|>2p-2$, which is impossible. If $y=x+1$, then 
$A^*$ has cardinality $2n-2$ and maximal multiplicities $p-k_1-y+1$,
$p-k_2+y$, hence, by our inductive hypothesis we obtain
$p-k_2+y\in\{p-2, p-1\}$. Thus all elements $a$ in $A$ different from
$(p-x, y)$ satisfy $\pi_1(a)\in\{0, 1\}$. If $B$ is non-empty, say,
$b=(1, z)\in B$, we can apply Lemma~\ref{Lem:replace} to $E:=
\{(p-y+1, y), (1, z), (1,0)^{y-2}\}$, and obtain a
contradiction. Hence, $|D|\geq k_1+k_2-3\geq k_1$. The sum $s$ of
$k_1$ elements of $D$ satisfies $\pi_1(s)=k_1$, hence, we either
obtain a zero-sum, or $\pi_2(s)\in[1, k_2-1]$. The latter is only
possible if the average value of $p-\pi_2(d)$ taken over all elements
$d\in D$ is larger than 2. Hence, we can choose a subset $D'$ of $D$
with sum $s$ satisfying $\pi_1(s)\in[1, y/2]$, $\pi_2(s)\in[k_2,
p-y]$. But then $s+(p-y+1, y)$ can be combined with some multiples of
$(1, 0)$ and $(0, 1)$ to a zero-sum.
\end{proof}

Now, we can remove the apparent asymmetry in
Lemma~\ref{Lem:2n-9locate}.
\begin{Lem}\label{Lem:symmetry}
$C$ does not contain an element $c$ with $\pi_2(c)=k_2-1$, and $D$
does not contain an element with $\pi_1(d)=k_1-1$.
\end{Lem}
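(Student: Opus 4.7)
The plan is to prove the statement by contradiction via a structural analysis of a replaced set $A^*$. First, by the coordinate swap $(u,v)\mapsto(v,u)$ — which exchanges the roles of $C$ and $D$ and preserves zero-sum freeness as well as all previous lemmas — it suffices to prove the first claim. Assume for contradiction that $c = (p-x, k_2-1) \in C$; Lemma~\ref{Lem:stairs} gives $x \ge k_2-1$.

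I would then apply Lemma~\ref{Lem:replace}~(1) to $E = \{c, (1,0)^x\}$, whose sum is $(k_2-1)(0,1)$. Since $A \setminus E$ contains $m_2 = p-k_2 \ge k_2-2$ copies of $(0,1)$, this yields a zero-sum free set
\[
A^* := (A \setminus E) \cup \{(0,1)^{k_2-1}\}
\]
of cardinality $|A^*| = 2p - 2 + k_2 - x - 2 \le 2p-3$ in which $(0,1)$ has multiplicity $p-1$. Any non-empty subset of the non-$(0,1)$ elements of $A^*$ whose $\pi_1$-projections sum to zero could be completed to a zero-sum in $A^*$ by appending at most $p-1$ copies of $(0,1)$ to cancel the second coordinate; hence the multiset $S := \pi_1(A^* \setminus \{(0,1)^{p-1}\}) \subseteq \Z_p\setminus\{0\}$ is itself zero-sum free.

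Next I would analyze $S$. In the main case $x = k_2-1$ we have $|S| = p-2$, well above $(p+1)/2$. Lemma~\ref{Lem:Onedim}~(1) combined with structural results for long zero-sum free sequences in $\Z_p$ of Savchev--Chen type forces almost all of $S$ to equal a single element $g$; since $A^*$ contains $m_1-x = p-k_1-k_2+1$ copies of $(1,0)$ (contributing $\pi_1 = 1$) and any non-dominant element has multiplicity bounded by $p-1-|S| = 1$, the hypotheses $k_1,k_2\le p/3$ force $g=1$. Combined with the observation that no element of $C\setminus\{c\}$ projects under $\pi_1$ into $\{1,2\}$ (because $p-x'$ with $x'\in[1,k_1-2]$ avoids this range), this forces $C=\{c\}$, and $B$ and $D$ lie (with at most one exception) in the coset $\{(1,*)\}$; a direct count then confirms $x=k_2-1$.

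Finally I would exhibit an explicit zero-sum in the original $A$. If $B$ contains an element $(1,v)$, the identity
\[
c + (1,v) + (k_2-2)(1,0) + (p-k_2+1-v)(0,1) = (0,0)
\]
gives a zero-sum; it is valid because $v\ge 1$ ensures $p-k_2+1-v\le m_2$ and $k_2-2\le m_1$. Otherwise $B=\emptyset$, so $|D|=k_1+k_2-3\ge k_2-1$, and I would select a subset $J\subseteq D$ of cardinality $\le k_2-1$ such that the $\pi_2$-sum $W$ of $J$ lies in $[k_2-1,\,p-1]$ (the flexibility being guaranteed by Lemma~\ref{Lem:Onedim}~(1) applied to $\pi_2(D)$); then $c + \sum_{d\in J}d + (k_2-1-|J|)(1,0) + (W-k_2+1)(0,1) = 0$ is a zero-sum. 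Either alternative contradicts the zero-sum freeness of $A$. The hard part will be the structural step pinning down $g=1$ and $C=\{c\}$; in particular, handling the one possible exception in $S$ (where $\pi_1=2$) requires a minor adjustment of the zero-sum construction (using $k_2-3$ copies of $(1,0)$ instead of $k_2-2$), and the case $x>k_2-1$, where $|A^*|$ drops further and more exceptions in $S$ can appear, requires a separate argument exploiting the lower size of $|A^*|$.
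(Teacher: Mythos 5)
Your route is genuinely different from the paper's. The paper fixes the offending element $d=(k_1-1,p-y)\in D$ and argues directly inside $A$: it first shows $D$ has no second element and that every $b\in B$ has $\pi_2(b)\ge y+1$, then runs a counting/averaging argument on $\pi_2$ of the sum of $B\cup C$ to force $B=\emptyset$ and almost all of $C$ into height $1$, and finally builds an explicit zero-sum -- no replacement step and no structure theorem for long zero-sum free sequences is used. You instead use Lemma~\ref{Lem:replace} to trade $c$ and $x$ copies of $(1,0)$ for $k_2-1$ copies of $(0,1)$, observe that $(0,1)$ then has multiplicity $p-1$ so that $S=\pi_1(A^*\setminus\{(0,1)^{p-1}\})$ is a zero-sum free sequence in $\Z_p$ of length $p+k_2-x-3$, and then read off the structure of $A$ from the structure of $S$. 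That reduction is correct (the verification that $0\notin S$ and that $\pi_1$-zero-sums lift is fine), and it is an attractive shortcut; what it buys is that the whole combinatorial case analysis of the paper is replaced by one appeal to the shape of long zero-sum free sequences in a cyclic group.

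However, as written there are two genuine gaps. First, the structural step is not a consequence of Lemma~\ref{Lem:Onedim}(1), which only counts subset sums; what you actually need is the full Savchev--Chen type theorem (every zero-sum free sequence in $\Z_p$ of length $>p/2$ is of the form $(n_1g,\dots,n_\ell g)$ with $\sum n_i\le p-1$), an external result that is nowhere in the paper's toolkit and that you only gesture at -- it is the entire engine of your argument, so it must be stated and cited (or proved) precisely, including the multiset/sequence version. Second, the case $x>k_2-1$ is deferred with no argument. It does not fail: the excess budget is $p-1-|S|=x-k_2+2\le k_1-k_2$, so the same multiplicity count still forces $g=1$ and $C=\{c\}$, at least $2k_2-3\ge k_2-1$ elements of $D$ have $\pi_1=1$, and one can build the final zero-sum choosing $J$ only among those; but none of this is in your text, and your displayed identity for the case $B\ne\emptyset$ only sums to zero when $x=k_2-1$ (in general one needs $x-\pi_1(b)$ copies of $(1,0)$, which in turn requires the structural bound $\pi_1(b)\le x-k_2+3$). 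So the plan is viable, but the structural input and the $x>k_2-1$ case must be supplied before this counts as a proof.
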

\begin{proof}
By symmetry, it suffices to prove the statement for $D$.

Suppose that $d=(k_1-1, p-y)\in D$. By
Lemma~\ref{Lem:2n-9locate} and \ref{Lem:stairs} we have $k_1-1\leq
y\leq k_2-2$. Suppose that $D$ 
contains another element $(x', n-y')$. Then we obtain the zero-sum $(x',
p-y')+(k_1-1, p-y)+(n-x'-k_1+1)\cdot(1, 0)+(y'+y)\cdot(0, 1)$. Next,
suppose that $B$ contains an element $(x', y')$. If $y'\leq t$, we
obtain the zero-sum $(x', y')+(k_1-1, p-y)+(p-x'-k_1+1)(1,
0)+(y-y')(0, 1)$, thus, all elements $b \in B$ satisfy $\pi_2(b)\geq
y+1\geq k_1$.

Let $s$ be the sum of all elements in $B$ and $C$. If
$\pi_2(s)\geq k_2+y$, we can choose some subset sum $s'$ satisfying
$\pi_2(s')\in[k_2+y, 2k_2+y)$. Then $\pi_2(s'), \pi_2(s'+d)\in[k_2,
2k_2]$, hence, we either get a zero-sum by adding a certain multiple of
$(1, 0)$ and $(0, 1)$, or $\pi_1(s'), \pi_1(s'+d)\in[1, k_1-1]$. But
this is impossible since $\pi_1(s'+d)=\pi_1(s')+k_1-1$. Hence, we
obtain $\pi_2(s)<k_2+y$.

Denote by $C_1$ the set of all $c\in C$ with
$\pi_2(c)=1$, and $C_2$ the set of all $c$ with $\pi_2(c)\geq 2$. Then
\[
\pi_2(s)\geq (y+1)|B|+|C|+|C_2|\geq y|B|+|C_2|+k_1+k_2-3,
\]
thus, for $|B|\geq 1$ we obtain the inequality $k_1-3<0$, which is
false. Hence, $B=\emptyset$, and $|C_2|\leq y+3-k_1$, thus,
\[
|C_1| = k_1+k_2-3-|C_2|\geq 2k_1+k_2-y-6\geq k_1-1.
\]
Choose a subset $C'\subseteq C_1$ with $\sum_{c\in C'} p-\pi_1(c)\geq
k_1-1$ and $|C'|$ minimal with this property, and let $s$ be the sum
of all elements of $C'$. Then $\pi_1(s+c)\in[p-k_1, p]$, and
$\pi_2(s+c)\in[p-y+1, p]$, hence, $s+c$ can be combined with certain
multiples of $(1, 0)$ and $(0,1)$ to a zero-sum.
\end{proof}

\begin{Lem}
\label{Lem:Bexists}
Suppose that $B$ is empty. Then there is a zero-sum.
\end{Lem}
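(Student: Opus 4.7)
The plan is to argue by contradiction, assuming $A$ is zero-sum free. Write $C = \{(p-x_i, y_i)\}_{i=1}^{r}$ with $1 \le y_i \le x_i \le k_1 - 2$ and $D = \{(u_j, p-v_j)\}_{j=1}^{s}$ with $1 \le u_j \le v_j \le k_2 - 2$ (combining Lemmas~\ref{Lem:2n-9locate}, \ref{Lem:stairs}, \ref{Lem:symmetry}); then $r + s = k_1 + k_2 - 2 \ge 4$. The first move is to analyse axial cross-pairs: for $c = (p-x, y) \in C$ and $d = (u, p-v) \in D$ with $y = v$, the sum $c + d$ equals $(u - x)(1, 0)$, and applying Lemma~\ref{Lem:replace}~(2) to $E = \{c, d\}$ with $a = (1, 0)$ — using that $m_1 \ge \lceil p/2 \rceil - 1$ since $k_1 \le p/3$ — either produces a zero-sum directly (when $u \le x$, because the required $k = p - (x - u)$ far exceeds $|E| = 2$) or sharply bounds $u - x \in \{1, 2\}$. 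The symmetric analysis for pairs with $x = u$ gives $v - y \in \{1, 2\}$.

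The second step is an inductive reduction. When Step~1 yields a displacement-$2$ pair, say $u - x = 2$, Lemma~\ref{Lem:replace}~(1) swaps $\{c, d\}$ for two copies of $(1, 0)$, yielding a zero-sum-free set $A'$ of the same size $2p - 2$ in which the multiplicity of $(1, 0)$ has been raised by $2$, so the parameter $k_1$ is replaced by $k_1 - 2$. The inductive hypothesis $B(p, k_1 - 2, k_2)$ then contradicts $A'$ as long as $k_1 - 2 \ge 3$; in the boundary cases $k_1 - 2 \in \{1, 2\}$ the set $A'$ must realise one of the two known maximal shapes, which forces every element of $(C \cup D) \setminus \{c, d\}$ onto the line $\pi_2 = 1$, and this is incompatible with $|C| + |D| = k_1 + k_2 - 2$ once Lemma~\ref{Lem:Gaop} is used to bound the number of admissible directions. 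Displacement-$1$ pairs are chained (two such pairs, or one such pair combined with a suitable third element like a $c' \in C$ with $x_{c'} = 1$) to reach either a displacement-$2$ reduction or a zero-sum-free set of size $> 2p - 2$, contradicting $D(\Z_p^2) = 2p - 1$.

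The third step handles the case where no axial cross-pair exists at all. Then the value sets $\{y_i\}_{i}$ and $\{v_j\}_{j}$ are disjoint subsets of $[1, k_2 - 2]$, and likewise $\{x_i\}_{i}$ and $\{u_j\}_{j}$ are disjoint subsets of $[1, k_1 - 2]$. Lemma~\ref{Lem:Gaop} forces distinct elements of $C$ that share a common $y_c$ to have distinct $x_c$, so $r \le \sum_{y \in \{y_i\}} (k_1 - 1 - y)$ and similarly $s \le \sum_{v \in \{v_j\}} (k_2 - 1 - v)$. Combined with the disjointness constraint $|\{y_i\}| + |\{v_j\}| \le k_2 - 2$, a short computation gives the bound $r + s \le (k_1 - 2)(k_2 - 2)$, which is strictly less than $k_1 + k_2 - 2$ whenever $(k_1 - 3)(k_2 - 3) \ge 3$. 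The finitely many remaining small pairs $(k_1, k_2)$ are dispatched by direct enumeration, again using Lemma~\ref{Lem:Gaop} to rule out pairs of linearly dependent candidates in $C$ or $D$.

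I expect the main obstacle to lie in Step~2: handling the inductive boundary cases $k_1' \in \{1, 2\}$ cleanly requires invoking the classification of known maximal configurations and cross-referencing it with the coordinates forced on the surviving elements of $C \cup D$, and the chaining of displacement-$1$ pairs into a usable displacement-$2$ reduction requires some care because the naive four-element replacement drops $|A|$ below $2p - 2$, putting it outside the reach of the inductive hypothesis $B(p, k_1', k_2')$.
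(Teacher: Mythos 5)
Your Steps 1--2 are not where the difficulty lies: by Lemma~\ref{Lem:stairs} every $c=(p-x,y)\in C$ has $y\le x$ and every $d=(u,p-v)\in D$ has $u\le v$, so in a matching pair $y=v$ one automatically has $u\le v=y\le x$, hence $c+d=-(x-u)\cdot(1,0)$ with $0\le x-u\le k_1-3\le m_1$, and a zero-sum follows at once; your displacement-$1$/$2$ cases and the whole replacement-and-induction machinery of Step~2 never actually occur. The real content of the lemma is therefore concentrated in your Step~3, the case with no coordinate match, and that step has a genuine gap. First, the inequality is stated backwards: $(k_1-2)(k_2-2)<k_1+k_2-2$ holds precisely when $(k_1-3)(k_2-3)<3$, so a bound $r+s\le(k_1-2)(k_2-2)$ contradicts $r+s=k_1+k_2-2$ only for a handful of small pairs $(k_1,k_2)$; since $k_1,k_2$ may be as large as $p/3$, the ``finitely many remaining small pairs'' you propose to enumerate are in fact an unbounded family, and for them your bound is vacuous. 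Second, the bound itself is unjustified: Lemma~\ref{Lem:Gaop} only excludes two \emph{distinct} linearly dependent elements, so it does not prevent elements of $C$ or $D$ from occurring with multiplicity, and counting distinct admissible coordinate slots does not bound the sizes of the multisets $C$ and $D$.

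More fundamentally, no counting of positions can prove the statement. For example $C=\emptyset$ together with $D$ consisting of $k_1+k_2-2$ pairwise distinct elements $(u,p-v)$, $1\le u\le v\le k_2-2$ (possible as soon as $k_1,k_2$ are moderately large), satisfies every coordinate constraint from Lemmas~\ref{Lem:2n-9locate}, \ref{Lem:stairs}, \ref{Lem:symmetry} and every restriction from Lemma~\ref{Lem:Gaop}, yet one still has to \emph{produce} a zero-sum. This forces subset-sum arguments, which is exactly what the paper's proof does: it first removes the easy configuration ($\pi_2(c)=1$ in $C$ together with $\pi_1(d)=1$ in $D$), then splits on whether $\sum_{c\in C}(p-\pi_1(c))\ge k_1-1$ and $\sum_{d\in D}(p-\pi_2(d))\ge k_2-1$; in the favourable case it chooses subsets $C'\subseteq C$ and $D'\subseteq D$ whose sums land in prescribed windows and adds them, and in the unfavourable cases it runs averaging arguments on quantities like $(p-\pi_1(c))/\pi_2(c)$ that either yield a zero-sum or force $k_1+k_2\le 14$, a range settled by the computer result \refmc{2max}. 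Your proposal contains no substitute for this analytic core, so the lemma is not proved.
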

\begin{proof}
If $C$ contains an element with $\pi_2(c)=1$, and $D$ contains an
element with $\pi_1(d)=1$, their sum can be combined with a certain
number of copies of $(1, 0)$ and $(0, 1)$ to give a zero-sum. Hence,
we may assume that all elements in $D$ satisfy $\pi_1(d)\geq
2$. Suppose that $\sum_{c\in C}\ii(p-\pi_1(c))\geq k_1 - 1$, and that
$\sum_{d\in D}\ii(p-\pi_2(d))\geq k_2 - 1$. Then we can choose a subset $C' \subset C$
such that the sum $s_C$ of all elements in $C'$ satisfies
$\pi_1(s_C)\in[k_1, p-k_1+1]$. We may suppose $\pi_2(s_C)\leq k_2-1$; otherwise
we get a zero-sum. Analogously, we may choose a subset $D' \subset D$ whose
sum $s_D$ satisfies $\pi_1(s_D)\leq k_1-1$, $\pi_2(s_D)\in[k_2, p-k_2+1]$. Hence, $s_C+s_D$
yields a zero-sum.

Suppose that $\sum_{d\in D}\ii(p-\pi_2(d))<k_2$. Suppose that $D$ is
non-empty, and that $(x, p-y)\in D$ with $x$ minimal. Then
$|D|\leq\frac{k_1-1}{d}$, and in particular $|C|\geq
k_2-1+(1-1/d)(k_1-1)$. If $C$ contains $d$ elements with $\pi_2(c)=1$,
adding some of these elements to $(x, y)$ yields an elements which can
be combined with some copies of $(1, 0)$ and $(0, 1)$ to a zero-sum.
Consider a subset $C'\subseteq C$, such that 
$s=\sum_{c\in C'}c$ satisfies $\pi_2(s)\geq k_2$, and that no proper
subset of $C'$ 
satisfies this property. Then $\pi_2(s)\in[k_2, 2k_2]$, thus we either
obtain a zero-sum or $\pi_1(s)\in[1, k_1-1]$, hence the average value
$\mu$ of $\frac{p-\pi_1(c)}{\pi_2(c)}$ taken over all elements in $C$
satisfies $\mu\geq\frac{p-k_1+1}{k_2}$. If $\sum_{c\in C\setminus
  C'}p-\pi_1(c)\geq k_1-1$, we can choose a subset $C''$ disjoint to
$C'$ with sum $s'$ satisfying $\pi_1(s')\in[p-2k_1, p-k_1]$,
$\pi_2(s')\in[1, k_1+k_2-2]$. Hence, $s+s'$ can be combined with
certain copies of $(1, 0)$ and $(0, 1)$ to a zero-sum. In particular,
$\sum_{c\in C}p-\pi_2(c)<p$. Hence, we obtain
\[
p>\sum_{c\in C}p-\pi_2(c)\geq (2|C|-d)\mu\geq
(2k_2-2+(2-2/d)(k_1-1)-d)\frac{p-k_1+1}{k_2}.
\]
Using $p-k_1>\frac{2}{3}p$, we see that this yields a contradiction,
unless
\[
\frac{k_2}{2}+(2-2/d)(k_1-1)-d-2<0.
\]
For $d=2$ this yields $2k_1+k_2<12$, which is covered by \refmc{2max}, while for $d\geq 3$ we use the bound $d\leq
k_1-1$ and obtain $2k_1+3k_2<14$, which is impossible in view of $k_1,
k_2\geq 3$.

If $D=\emptyset$, the same argument yields $p>|C|\mu$, thus
$p>(k_1+k_2-2)\frac{p-k_1+1}{k_2}$, thus
$(k_1-2)\frac{p-k_1+1}{k_2}\leq k_1-2$. However, this contradicts the
assumption $k_1, k_2\leq p/3$.

We may therefore assume that $\sum_{c\in C}(n-\pi_1(c))<k_1$, and
therefore $|D|\geq \max(k_1-1, k_2-1)$. For every subset $D'\subseteq
D$ consisting of $\lceil\frac{k_1}{2}\rceil$ elements we have
$\sum_{d\in D'}(n-\pi_2(d))\geq p-k_2+1$, while $\sum_{d\in D}(n-\pi_2(d))
\leq p-1$, hence 
\[
\frac{|D|}{\lceil\frac{k_1}{2}\rceil}(p-k_2+1)\leq p-1.
\]
From $k_2<p/3$ we obtain $|D|\leq \frac{3(k_1+1)}{4}$, which
contradicts $|D|\geq \max(k_1-1, k_2-1)$, unless $k_1, k_2\leq
7$. However, for $k_1+k_2\leq 14$ our claim follows from \refmc{2max}.
\end{proof}

We can now finish the proof of \refmm{2max}.

If $C$ and $D$ are both empty, then $|B|=k_1+k_2-2$. Set $B_1=\{b\in
B:\pi_1(b)>\pi_2(b)\}$, $B_2=\{b\in B:\pi_1(b)<\pi_2(b)\}$,
$B_3=\{b\in B:\pi_1(b)=\pi_2(b)\}$. Suppose that $\sum_{b\in B_1\cup
  B_3}\pi_1(b)\geq k_1$, and $\sum_{b\in B_2\cup B_3}\pi_1(b)\geq
k_2$. Then we obtain a zero-sum by first choosing a subset of
$B_1\cup B_3$ with sum $s$ minimal subject to the condition
$\pi_1(s)\geq k_1$, and then we add elements from $B_2$ and elements
not yet used from $B_3$ to reach a sum $s'$ with $\pi_1(s')\in[k_1,
2k_1+k_2]$, $\pi_2(s)\in[k_2, 2k_2]$. Without loss we may
assume that $\sum_{b\in B_2\cup B_3}\pi_2(b)< k_2$. Choose a subset $B'$
of $B$ containing $B_2\cup B_3$ with sum $s$ such that $\pi_2(s)\geq
k_2$, and that $\pi_2(s)$ is minimal with respect to these conditions.
Then $\pi_1(s)\in[1, k_1-1]$, for otherwise we obtain a
zero-sum. There are at least $k_1-2$ elements in $B_1$ not involved in
this sum, and each element in $B_1$ satisfies $\pi_1(b)\geq 2$, hence,
we can choose a subset $B''$ in the remainder with $\sum_{b\in B'}
\pi_1(b)\geq k_1-1$, and $B''$ minimal with this property. In
particular, $\sum_{b\in B'}\pi_1(b)\in[k_1-1, 2k_1-1]$, and
$\sum_{b\in B'}\pi_2(b)\leq k_2$. Hence, adding the elements in $B'$
and the elements in $B''$, we obtain an element which can be combined
with some copies of $(1, 0)$ and $(0, 1)$ to a zero-sum.

Hence, without loss we may assume that $C$ is non-empty. Fix
elements $b\in B, c\in C$. Consider the sets $\mathcal{S}=\Sigma(\{(1, 0)^{m_1},
(0, 1)^{m_2}, b, c\})$, $\mathcal{S'}=\Sigma(\{(1, 0)^{m_1},
(0, 1)^{m_2}, b-(1, 0), c+(1, 0)\})$. Then
\[
\mathcal{S'} \subset \mathcal{S}\cup\underbrace{\{b+m_1(1, 0)+t(0, 1):0\leq t\leq
m_2\}\cup\{c+t(0, 1):0\leq t\leq m_2\}}_{(*)}
\]\[
\text{and}\quad\underbrace{\{0, b+c\}+\Sigma(\{(1, 0)^{m_1}, (0, 1)^{m_2}\})}_{(**)} \subset S
.
\]
Since $m_1, m_2\geq 2p/3$, we get that $(*)$ is contained in $(**)$, and so $\mathcal{S}'\subseteq\mathcal{S}$.
Hence, if $A$ is zero-sum free,
the set $A'$ obtained by replacing $b$ by $b-(1, 0)$ and $c$ by $c+(1,
0)$ is also zero-sum free. We can repeat this procedure, until one of
$b, c$ is contained in $\langle(0, 1)\rangle$. If the element obtained
in this way is not equal to $(0, 1)$, we can replace it by at least two
copies of $(0, 1)$, which is impossible. If it is equal to $(0, 1)$,
which can only happen if $\pi_1(b)=1$ or $\pi_1(c)=1$,
we can replace one or two elements from $B$ and $C$ by as many copies
of $(0, 1)$, that is, our claim follows from the inductive hypothesis,
unless the resulting set contains an element with multiplicity $\geq
p-2$. Since the element with multiplicity $\geq p-2$ is necessarily
$(0, 1)$, and $(1, 0)\in A$, we find that all elements different from the
elements $b$ and $c$ chosen at the beginning are
contained in $(1, 0)+\langle(0, 1)\rangle$. In particular,
$C=\{c\}$. If $D\neq\emptyset$, we 
obtain in the same way $m_1\geq p-4$, that is, $k_1+k_2\leq 8$, a case
which is covered by our computations.

Hence, it remains to consider the case $|B|=k_1+k_2-3$, $C=\{c\}$,
$k_2\leq 4$. Moreover, since $|B|\geq 2$, we could use any element
$b\in B$ in the argument above and find that all elements in $B$
satisfy $\pi_1(b)=1$. Hence, replacing $c$ by $c+(1, 0)$ and $b$ by
$b-(1, 0)$ yields a zero-sum free set of cardinality $2p-3+\pi_2(b)$,
thus, $B=\{(1, 1)^{k_1+k_2-3}\}$. But then we can form the zero-sum
$(p-\pi_1(c))(1, 1)+c+(\pi_1(c)-\pi_2(c))(0, 1)$, which is possible
since $\pi_2(c)\geq 2$ by Lemma~\ref{Lem:coset}, and
$p-\pi_1(c)\geq\pi_2(c)$ by Lemma~\ref{Lem:stairs}. Hence, the Theorem
is proven. 

\section{The largest multiplicity of a zero-sum free set in
  $\Z_p^2$}
\label{sect:1MaxMult}

In this section let $A\subset\Z_p^2$ be a zero-sum free set with $|A|=2p-2$ and
maximal multiplicity $p-3$. Denote by $m$ the second largest
multiplicity in $A$. Without loss we may assume that $(1, 0)$ occurs
in $A$ with multiplicity $p-3$, and $(0, 1)$ with multiplicity $m$.
Moreover, by \refmc{all} we may suppose $p \ge 29$.
By \refmm{2max}, we get
$p-m > 29/3$, thus $m\leq p-10$.
\begin{Lem}
\label{Lem:n-3Diff}
Suppose that $(x, y), (x', y)\in A$. Then $|x-x'|\leq 1$. Moreover,
there is at most one pair $a, a'$ of elements in $A$ with $a\neq a'$,
$\pi_2(a)=\pi_2(a')$; in particular, the maximal multiplicity of
$\pi_2(A \setminus \{(1,0)^{p-3}\})$ is at most $m+1$.
\end{Lem}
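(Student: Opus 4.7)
The first claim is immediate from Lemma~\ref{Lem:2n-9Diff}: take $B=\{(x,y),(x',y)\}$ and $k=\ell=1$. Then $k+\ell=2=p-m_1-1$, so the ``in particular'' clause applies and gives $|x-x'|\le p-m_1-2=1$.

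For the second claim, I argue by contradiction. Suppose $A$ contains two distinct unordered pairs $\{a_1,a_1'\}$, $\{a_2,a_2'\}$ of value-distinct elements sharing a common $\pi_2$-coordinate; write $y_i=\pi_2(a_i)=\pi_2(a_i')$. If $y_1=y_2$, three distinct elements of $A$ share that $\pi_2$-value, but the first claim forces their $\pi_1$-coordinates to lie pairwise within distance~$1$, admitting only two distinct values---contradiction. Hence $y_1\ne y_2$, and the first claim lets me relabel so that $a_i'=a_i+(1,0)$ for $i=1,2$. I then apply Lemma~\ref{Lem:2n-9Diff} with $B=\{a_1,a_1',a_2,a_2'\}$ and the pairing $\{a_1,a_2\}$ versus $\{a_1',a_2'\}$: both sides have $\pi_2$-sum $y_1+y_2$, while their $\pi_1$-sums differ by exactly $-2$. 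Since $|{-2}|>p-m_1-2=1$, this contradicts the conclusion of the lemma---provided its hypothesis $-(y_1+y_2)\in\Sigma(\pi_2(A\setminus B))$ is satisfied. To verify it, observe that $A\setminus B$ still contains all of $(1,0)^{p-3}$, at least $m-1$ copies of $(0,1)$, and the remaining non-$(1,0),(0,1)$-elements. The $(0,1)$-copies contribute the full interval $[0,m-1]$ to $\Sigma(\pi_2(A\setminus B))$, and when $-(y_1+y_2)\bmod p$ lies outside this interval, Corollary~\ref{Cor:CD} combined with Lemma~\ref{Lem:Olson} applied to the $\pi_2$-projections of the remaining elements completes the coverage of $\Z_p$.

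The ``in particular'' bound on the maximum multiplicity is deduced by a substitution argument via Lemma~\ref{Lem:replace}. If some $y_0\in\Z_p$ had multiplicity $\ge m+2$ in $\pi_2(A\setminus\{(1,0)^{p-3}\})$, then because every single support element has multiplicity $\le m$, the fibre above $y_0$ must contain two distinct support elements $v=(u_0,y_0)$ and $v'=v+(1,0)$, both with multiplicity $\ge 2$ in $A$. Taking $E=\{v,(1,0)^{p-u_0}\}$ when $u_0\ge 3$, or the symmetric choice $\{v',(1,0)^{p-u_0-1}\}$ when $u_0\ge 2$, one has $\sum E=y_0\cdot(0,1)$, so Lemma~\ref{Lem:replace}~(1) replaces $E$ by $y_0$ copies of $(0,1)$ to form a zero-sum free set of cardinality $p+u_0+y_0-3$ (or one more); comparing to $D(\Z_p^2)-1=2p-2$ bounds $u_0+y_0$. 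Iterating with the second element of the pair and handling the corner cases $u_0\in\{0,1,2\}$ and the distinguished pair $((0,1),(1,1))$ by direct arguments rules out every configuration with multiplicities summing to $m+2$ or more, establishing the bound $m+1$.

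The main technical obstacle I anticipate is the verification of the hypothesis of Lemma~\ref{Lem:2n-9Diff} when $m$ is small: the $(0,1)$-copies alone then no longer cover the target $-(y_1+y_2)$, and one must exploit the many distinct residues produced by the remaining elements of $A\setminus B$ via Cauchy--Davenport-type estimates. The substitution corner cases for the multiplicity bound are fiddly but do not require new ideas.
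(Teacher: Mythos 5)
Your treatment of the first claim is fine and agrees with the paper (with $k+\ell=2=p-m_1-1$ the ``in particular'' clause of Lemma~\ref{Lem:2n-9Diff} applies). The trouble starts with the second claim: you read ``at most one pair'' as ``at most one unordered pair of distinct \emph{values}'', and your contradiction argument only handles two pairs that are distinct as value-pairs, hence (after the $y_1=y_2$ reduction) lying in different fibres. What is actually needed, both for the ``in particular'' clause and for the later uses of this lemma (choosing a single $a$ so that $\pi_2(A\setminus\{(1,0)^{p-3},a\})$ has multiplicity at most $m$), is the statement about \emph{occurrences}: one cannot extract two disjoint occurrence-pairs, and in particular the configuration $\{(x,y)^{\ell},(x+1,y)^{\ell'}\}\subset A$ with $\ell,\ell'\ge 2$ must be excluded. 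Your value-pair argument never touches this configuration, and your separate patch via Lemma~\ref{Lem:replace} does not close the hole: replacing $E=\{(u_0,y_0),(1,0)^{p-u_0}\}$ by $\ii(y_0)$ copies of $(0,1)$ produces a zero-sum free set of size $p-3+u_0+\ii(y_0)$, so comparison with $2p-2$ gives only $u_0+\ii(y_0)\le p+1$, which excludes nothing when the coordinates are small (e.g. the fibre $\{(0,1)^{m},(1,1)^{2}\}$, your ``distinguished pair'', or $\{(2,3)^{\ell},(3,3)^{\ell'}\}$); moreover Lemma~\ref{Lem:replace}~(1) requires $(0,1)^{\ii(y_0)-1}\subseteq A\setminus E$, i.e. $\ii(y_0)\le m+1$, which you never check, and the proposed ``iteration'' with the second element of the pair cannot be carried out, since the first replacement already consumes $p-u_0$ of the only $p-3$ available copies of $(1,0)$.

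Second, even in the case you do treat ($y_1\ne y_2$), verifying $-(y_1+y_2)\in\Sigma(\pi_2(A\setminus B))$ is not merely a ``technical obstacle for small $m$'': the Cauchy--Davenport/Olson argument fails precisely when the $p-3$ remaining projections are concentrated on essentially one value, i.e. when some fibre has $\pi_2$-multiplicity about $p-5$, and this is a genuinely possible configuration rather than a routine estimate (an almost-constant multiset plus the interval from the $(0,1)$'s can still miss the target). The paper resolves exactly this case by a structural endgame: it re-chooses the two occurrence-pairs inside the heavy fibre, runs the argument again to conclude $A=\{(1,0)^{p-3},(x,y)^{\ell},(x+1,y)^{\ell'},a\}$ with $\ell+\ell'=p$, and then exhibits the explicit zero-sum $\ell\cdot(x,y)+\ell\cdot(1,0)+\ell'\cdot(x+1,y)$. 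Nothing in your proposal substitutes for this step, so both the concentrated case of your second claim and the multiplicity bound $m+1$ remain unproven.
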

\begin{proof}
The first claim follows from Lemma~\ref{Lem:2n-9Diff}, if we can show
that $-y\in\Sigma(\pi_2(A)\setminus\{y^2\})$, which in turn is implied
by the Cauchy-Davenport-theorem. For the second claim suppose that $a_1,
a_1', a_2, a_2'$ are elements fo $A$ with $a_i\neq a_i'$,
$\pi_2(a_i)=\pi_2(a_i')$. Then we apply Lemma~\ref{Lem:2n-9Diff} to
$a_1+a_2$, $a_1'+a_2'$, where we may assume
$|\pi_1(a_1+a_2)-\pi_1(a_1'+a_2')|=2$. Note that
$S=\pi_2(A\setminus\{a_1, a_2, a_1', a_2', (1, 0)^{p-3}\})$ contains
$p-3$ non-zero elements, hence, 
the Cauchy-Davenport-theorem together with Lemma~\ref{Lem:Olson} imply
that $\Sigma(S)=\Z_p$ unless all elements in $S$ are equal with at most one
exception, that is $\pi_2(A)$ contains some non-zero element $y$ with
multiplicity $\geq p-5>2p/3+2 \ge m + 2$. Using     
the first part of the lemma, we get $\{(x,y)^\ell,
(x+1,y)^{\ell'}\} \subset A$  for some $x \in \Z_p$ and $\ell,\ell' \ge 2$.
Now we replace $a_1, a_2, a_1', a_2'$ by two pairs $(x,y)$, $(x+1,y)$
and do the same argument again. As a result, we get
$A=\{(1, 0)^{p-3}, (x, y)^\ell, (x+1, y)^{\ell'}, a\}$ with
$\ell + \ell' = p$ and both $\le m \le 2p/3$. But this contains
the zero-sum
$\ell\cdot(x, y) + \ell\cdot(1,0) + \ell'\cdot(x+1, y)$.
\end{proof}

\begin{Lem}
\label{Lem:n-3:SmallMult}
If $m\leq\frac{p}{6}$, then $A$ contains a zero-sum.
\end{Lem}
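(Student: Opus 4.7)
Plan: Assume for contradiction that $A$ is zero-sum free with $m\le p/6$. Let $A':=A\setminus\{(1,0)^{p-3}\}$, which has $|A'|=p+1$ elements. A non-empty multisubset $T\subseteq A'$ can be completed to a zero-sum of $A$ by adjoining $-\pi_1(\sum T)\bmod p$ copies of $(1,0)$ if and only if $\pi_2(\sum T)=0$ and $\pi_1(\sum T)\in\Z_p\setminus\{1,2\}$ (since only $p-3$ copies of $(1,0)$ are available). So the goal is to produce such a $T$.

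The main structural input is Lemma~\ref{Lem:n-3Diff}: any two elements of $A'$ sharing a second coordinate are equal, except possibly for a single "exceptional" pair differing by $(1,0)$. Consequently $\pi_2(A')\subseteq\Z_p^*$ has every multiplicity at most $m+1$, and the support $Y:=\mathrm{supp}(\pi_2(A'))$ satisfies $|Y|\ge(p+1)/(m+1)\ge 6$; moreover $A'$ decomposes into near-pure vertical stacks $\{(x_y,y)^{n_y}\}$ for $y\in Y$, with the one permitted exception.

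The natural strategy is to apply Lemma~\ref{Lem:CompAppl} with $B:=\{(1,0)^{p-3},(0,1)^m\}$, so that the threshold in its condition (b) becomes $2(2\lceil(p+1)/(m+2)\rceil-1)$. I would partition $A\setminus B=U\sqcup V$ as follows: let $U$ consist of a single representative $(x_y,y)$ for each $y$ in a carefully chosen subset $Y'\subseteq Y$ avoiding antipodal pairs, so that Olson's Lemma~\ref{Lem:Olson} gives $|\Sigma(\pi_2(U))|\ge|Y'|(|Y'|+1)/2$. Choosing $|Y'|$ of order $\sqrt{p/(m+1)}$ ensures $\Sigma(\pi_2(U))+[0,m]=\Z_p$, since the interval $[0,m]$ of length $m+1$ fills gaps of that size. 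The complementary set $V$ then has $|V|\ge p+1-m-O(\sqrt{p/(m+1)})\gtrsim 5p/6$. When $\pi_1(V)$ retains sufficiently many distinct values, Lemma~\ref{Lem:OlsonSize} already gives $|\Sigma(\pi_1(V))|=p$, which comfortably exceeds the threshold $\approx 4(p+1)/(m+2)$.

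The main obstacle is the concentrated case, where $\pi_1(V)$ takes very few values; the extremal subcase is all elements of $V$ sharing one first coordinate $x_0$, giving only $|\Sigma(\pi_1(V))|=|V|+1$. For $m$ close to $p/6$ this still beats the threshold, but it becomes tight for small $m$. Very small $m$ (a few small constants) must be dispatched separately: since $\pi_2(A')$ has $p+1$ elements in $\Z_p^*$ (which has only $p-1$ elements) with multiplicities at most $m+1$, the "at most one exceptional pair" clause of Lemma~\ref{Lem:n-3Diff} gives outright pigeonhole contradictions for $m\le 2$. The remaining moderate-$m$ concentrated subcase, where $V$ sits in a single column, is resolved directly: the arithmetic progression $\{0,x_0,2x_0,\ldots\}$ fills a full coset, and an application of Lemma~\ref{Lem:compactDiscrete} locates an element of $\Sigma(A')$ of the form $(x,0)$ with $x\notin\{1,2\}$, completing the zero-sum.
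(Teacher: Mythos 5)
There is a genuine gap at the central step of your argument. You take $U$ to consist of one representative $(x_y,y)$ per residue $y\in Y'$ and claim that choosing $|Y'|$ of order $\sqrt{p/(m+1)}$ "ensures $\Sigma(\pi_2(U))+[0,m]=\Z_p$, since the interval $[0,m]$ fills gaps of that size." Olson's lemma only bounds the \emph{cardinality} of $\Sigma(\pi_2(U))$ from below; it says nothing about how these subset sums are distributed, and a set of size $\approx p/(m+1)$ in $\Z_p$ can easily be concentrated in a short interval (take $Y'=\{1,2,\dots\}$), leaving a gap of length close to $p$ that $[0,m]$ cannot cover. The only clean sufficient condition in this spirit is Cauchy--Davenport, which requires $|\Sigma(\pi_2(U))|\ge p-m$, hence $|Y'|\gtrsim\sqrt{2p}$ distinct residues. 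But in the hardest regime of the lemma, $m$ close to $p/6$, Lemma~\ref{Lem:n-3Diff} only guarantees about $(p+1)/(m+1)\approx 6$ distinct values of $\pi_2$ on $A\setminus\{(1,0)^{p-3}\}$, so one representative per residue yields at most $2^{6}$ subset sums --- hopelessly far from $p-m$. So the verification of the $U$-condition of Lemma~\ref{Lem:CompAppl} collapses exactly where the statement is most delicate. (A smaller issue: your "outright pigeonhole contradiction for $m\le 2$" only works for $m=1$; for $m=2$ the fiber bound $m+1$ allows $|A'|$ up to about $2p$, so no contradiction arises.)

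For comparison, the paper avoids Lemma~\ref{Lem:CompAppl} entirely in this range of $m$ (it is used instead for $m\ge p/6$, in Lemmas~\ref{Lem:k2} and \ref{Lem:k36}). Its proof observes that it suffices to find \emph{three disjoint zero-sums} inside $\pi_2(A\setminus\{(1,0)^{p-3}\})$: these produce three elements of $\langle(1,0)\rangle$ in the sumset, and some nonempty combination of them has first coordinate outside $\{1,2\}$, so it can be completed by at most $p-3$ copies of $(1,0)$. The three zero-sums are obtained by removing one element to force all multiplicities $\le m$, partitioning the remaining $\approx p$ projections into three sets, each a union of many $6$-element subsets of \emph{distinct} residues, and applying Lemma~\ref{Lem:OlsonFmc} to each small subset together with Cauchy--Davenport across the $\lfloor p/18\rfloor$-odd subsets; the quadratic gain $21$ per $6$-element block is what makes the count close (for $p\ge 35$, with ad hoc decompositions for $p=29,31$). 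If you want to salvage a CompAppl-style argument for small $m$, you would similarly need $U$ to contain many disjoint rainbow blocks rather than single representatives, and you would still have to treat the large-$m$ end ($m$ comparable to $p/6$) by a different device.
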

\begin{proof}
It suffices to show that $\pi_2(A\setminus\{(1, 0)^{p-3}\})$ contains three
disjoint zero-sums: these zero-sums generate three elements in
$\langle(1, 0)\rangle$, hence, together with some copies of $(1, 0)$
we obtain a zero-sum in $A$. By Lemma~\ref{Lem:n-3Diff}, we may choose
$a \in A$ such that
$S = \pi_2(A\setminus\{(1, 0)^{p-3}, a\})$ has
maximal multiplicity (at most) $m$.
Then we can split $S$ into subsets of given cardinalities,
each of which having no multiple elements, provided that each
given cardinality
are at most 6. We choose to do this in the following way:
Set $d = \lfloor\frac{p}{3}\rfloor$ and $r = d \bmod 6$.
We form $3\cdot\lfloor\frac{d}{6}\rfloor$ sets of cardinality $6$
and $3$ (possibly empty) sets of cardinality $r$.
Then we group these small sets into three sets
$S_1, S_2, S_3$, each being the union of $\lfloor\frac{d}{6}\rfloor$ subsets
of cardinality 6 and one of cardinality $r$.
If we can show that each $S_i$ contains
a zero-sum, we are done. If one of the small sets contains a zero-sum,
then so does each larger set, hence, we may assume that each of the
small sets is zero-sum free, and we can apply Lemma~\ref{Lem:OlsonFmc}.
Thus $S_{i}$ contains a zero-sum provided that
\[
1+\lfloor\frac{d}{6}\rfloor\cdot 21 + \frac{r(r+1)}{2} \ge p
.
\]
The left hand side is equal to
\[
1+\frac{d-r}{6}\cdot 21 + \frac{r(r+1)}{2}
=1+\frac{7}{2}\cdot\lfloor\frac{p}{3}\rfloor + \frac{r(r-6)}{2}
\ge \frac{7}{6}p - \frac{4}{3} + \frac{r(r-6)}{2}
.
\]
This is minimal for $r = 3$, so the inequality holds
provided that $\frac{1}{6}p \ge \frac{4}{3} + \frac{9}{2}$,
i.e.\ $p \ge 35$.

For $p = 29$ or $31$, we apply the same argument but decompose $S$
differently.
If $p = 29$, then $m \le 4$, and we can choose three $9$-element sets
$S_i$ each one consisting of one set of 7 distinct points and one pair
of distinct points, which suffices. If $p = 31$, then $m \le 5$,
and we obtain three $10$-element sets
consisting of $6$ distinct points plus $4$ distinct points,
which also suffices.
\end{proof}

Define $k=\lceil\frac{p}{m}\rceil$. The introduction of this parameter
turns out to be useful for two reasons: first, it distinguishes
several cases for which we shall use different arguments, and second,
we will apply Lemma~\ref{Lem:CompAppl}, which
involves $k$.
 Note that by Lemma~\ref{Lem:n-3:SmallMult},
only the values $2 \le k \le 6$ are left.

In the present case, the condition on $V$ of Lemma~\ref{Lem:CompAppl}
becomes $|\Sigma(\pi_1(V))| \ge 4k - 1$. Verifying the condition for $U$
is facilitated by the following simple observation. 
\begin{Lem}
\label{Lem:CompHilf}
Let $U\subseteq\Z_p$ be a set satisfying $|u|\leq m$ for all $u\in
U$. Then $\Sigma(\{1^m\}\cup U)=\Z_p$ is equivalent to $\sum_{u\in
  U}|u| \ge p-m-1$. 
\end{Lem}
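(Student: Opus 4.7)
The plan is to prove, by induction on $|U|$, the stronger statement that $\Sigma(\{1^m\}\cup U)$ is always a \emph{discrete interval} in $\Z_p$---that is, the image of some set of consecutive integers---of cardinality exactly $\min(p,\,m+1+\sum_{u\in U}|u|)$. From this stronger claim the lemma is immediate: $\Sigma(\{1^m\}\cup U)=\Z_p$ iff this cardinality equals $p$, iff $\sum_{u\in U}|u|\ge p-m-1$.

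For the base case $U=\emptyset$, the set $\Sigma(\{1^m\})=\{0,1,\ldots,m\}$ is obviously a discrete interval of size $m+1$. For the inductive step, I would pick any $u\in U$, put $U'=U\setminus\{u\}$, and write $I:=\Sigma(\{1^m\}\cup U')$; by induction, $I$ is a discrete interval of cardinality $L:=\min(p,\,m+1+\sum_{u'\in U'}|u'|)$, and we have $\Sigma(\{1^m\}\cup U)=I+\{0,u\}$. If $L=p$ there is nothing to do. Otherwise $L\ge m+1 > |u|$, so lifting $u$ to the integer $\tilde u\in[-m,m]$ with $|\tilde u|=|u|$, the integer intervals representing $I$ and $I+\tilde u$ overlap in at least $L-|u|\ge 1$ points; their union is therefore again an integer interval, this time of length $L+|u|$, whose image in $\Z_p$ has cardinality $\min(p,\,L+|u|)$, completing the induction.

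The only real subtlety is ensuring that the set really stays a \emph{genuine} interval in $\Z_p$ at each stage, rather than splitting into two disjoint arcs. This is exactly what the hypothesis $|u|\le m$ buys, together with the fact that after the base case the interval length is always $\ge m+1$ and therefore strictly larger than $|u|$: the two translates are forced to overlap. There is no substantial combinatorial obstacle beyond this bookkeeping, which is presumably why the authors record the statement as a simple observation.
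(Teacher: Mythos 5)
Your proposal is correct and is essentially the paper's own argument: the paper also proceeds by induction on $|U|$, using the observation that adding $\{0,u\}$ with $|u|$ at most the length of an interval extends that interval by exactly $|u|$ (to the right or left according to the sign of $u$), so that $\Sigma(\{1^m\}\cup U)$ is an interval of cardinality $\min(p,\,m+1+\sum_{u\in U}|u|)$. Your write-up merely makes the interval-length bookkeeping explicit.
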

\begin{proof}
If $x, y, u \in \Z_p$ satisfy $|u|\leq \ii(y-x)$ , then
\[
\{x, x+1, \ldots, y\}+\{0, u\} = \begin{cases}
\{x, x+1, \ldots, y+u\}, & \ii(u)=|u|\\
\{x-u, x-u+1, \ldots, y\}, & \ii(u)=p-|u|.
\end{cases}
\]
Our claim now follows by induction on $|U|$.
\end{proof}
\begin{Lem}
\label{Lem:k2}
Suppose that $k=2$ (i.e. $m \ge \frac{p}{2}$). Then $A$ contains a zero-sum.
\end{Lem}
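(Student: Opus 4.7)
The proof plan is to apply Lemma~\ref{Lem:CompAppl} with $(m_1,m_2) = (p-3, m)$. Setting $A' := A \setminus \{(1,0)^{p-3}, (0,1)^m\}$, one has $|A'| = p - m + 1$, and by Lemma~\ref{Lem:Gaop} every $a \in A'$ satisfies $\pi_1(a), \pi_2(a) \neq 0$. The hypothesis $m \ge p/2$ gives $\lceil(p+1)/(m+2)\rceil \le 2$, so the condition on $V$ reduces to $|\Sigma(\pi_1(V))| \ge 7$. Moreover, since $|u| \le (p-1)/2 \le m$ for every $u \in \Z_p$, Lemma~\ref{Lem:CompHilf} rewrites the condition on $U$ as $\sum_{u \in U}|\pi_2(u)| \ge p - m - 1$.

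A key simplification is that the $V$-condition is automatic as soon as $|V| \ge 6$. Indeed, decomposing $V = \bigsqcup_i \{v_i^{\mu_i}\}$ by its distinct points, with $c_i := \pi_1(v_i) \neq 0$, each $\Sigma(\pi_1(\{v_i^{\mu_i}\})) = \{0, c_i, 2c_i, \ldots, \mu_i c_i\}$ is an arithmetic progression of exactly $\mu_i + 1$ elements of $\Z_p$ (using $\mu_i < p$), so iterated Cauchy--Davenport yields
\[
|\Sigma(\pi_1(V))| \;\ge\; \min\bigl(p,\; |V| + 1\bigr) \;\ge\; 7.
\]
The entire task thus reduces to exhibiting a sub-multiset $V \subset A'$ with $|V| = 6$ and $\sum_{v \in V}|\pi_2(v)| \le \sum_{a \in A'}|\pi_2(a)| - (p-m-1)$.

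The natural choice is to take $V$ to consist of the six elements of $A'$ of smallest $|\pi_2|$-value. By Lemma~\ref{Lem:n-3Diff}, distinct points of $A$ share a $y$-coordinate for at most one pair, so each $z \in \{1, 2, \ldots, (p-1)/2\}$ arises as $|\pi_2(a)|$ for at most three distinct points of $A'$. In the generic regime where $A'$ has at least six distinct points, the six smallest $|\pi_2|$-values sum to at most $1+1+2+2+3+3 = 12$, whence $\sum_U |\pi_2| \ge \sum_{A'}|\pi_2| - 12$, which comfortably exceeds $p - m - 1$ given $|A'| = p - m + 1 \ge 11$.

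The hard part will be the degenerate regime in which $A'$ is dominated by a single distinct point $a^* = (x, \pm 1)$ of large multiplicity $\mu^*$, possibly alongside a second point $b^*$ realising the exceptional pair of Lemma~\ref{Lem:n-3Diff}. Here $V = \{(a^*)^6\}$ yields $\sum_V|\pi_2| = 6$, yet $\sum_{A'}|\pi_2|$ need not grow with $\mu^*$, so the bound $\sum_U|\pi_2| \ge p - m - 1$ can fail. For this obstruction I would argue directly: the rigid structure $A = \{(1,0)^{p-3}, (0,1)^m, (a^*)^{\mu^*}, \ldots\}$ reduces zero-sum-freeness to a handful of linear congruences, namely that for each admissible pair $(\gamma, \beta) \in \{(p-m, m), (p-m+1, m-1)\}$ the value $\gamma \cdot \pi_1(a^*) \bmod p$ must lie in $\{p-2, p-1\}$. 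Taking differences forces $\pi_1(a^*) \in \{\pm 1, \pm 2\}$, and case-by-case inspection rules out all such choices for $m$ in the range $[p/2, p-10]$ with $p \ge 29$; the few remaining small primes are covered by \refmc{all}.
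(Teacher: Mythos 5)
Your set-up is the same as the paper's: apply Lemma~\ref{Lem:CompAppl} with $(m_1,m_2)=(p-3,m)$, note that the $V$-condition $|\Sigma(\pi_1(V))|\ge 7$ holds for \emph{any} six elements of $A':=A\setminus\{(1,0)^{p-3},(0,1)^m\}$ by Cauchy--Davenport (all first coordinates are nonzero by Lemma~\ref{Lem:Gaop}), and translate the $U$-condition via Lemma~\ref{Lem:CompHilf} into $\sum_{u\in U}|\pi_2(u)|\ge p-m-1$. Up to there everything is correct. The gap is in how you verify this last inequality. In your ``generic regime'' the deduction is a non sequitur: from $\sum_{v\in V}|\pi_2(v)|\le 12$ you only get $\sum_{u\in U}|\pi_2(u)|\ge \sum_{a\in A'}|\pi_2(a)|-12\ge (p-m+1)-12=p-m-11$, since a priori each element of $A'$ contributes only $1$; the claim that this ``comfortably exceeds $p-m-1$'' is unjustified. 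Moreover your counting bounds the number of \emph{distinct points} with a given $|\pi_2|$-value, while the quantity you need controls elements with multiplicity, and you then defer exactly this multiplicity problem (``the hard part'') to a sketch: the asserted congruence conditions $\gamma\cdot\pi_1(a^*)\bmod p\in\{p-2,p-1\}$ are not derived, and ``case-by-case inspection rules out all such choices'' is not carried out. As it stands the proof is incomplete.

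The missing ingredient, which is how the paper closes the argument, is a multiplicity bound for the rows $\pi_2=\pm1$: by Lemma~\ref{Lem:n-3Diff} the fibre $\pi_2^{-1}(1)$ contains at most one element of $A'$ (counted with multiplicity, since the multiplicity of $1$ in $\pi_2(A\setminus\{(1,0)^{p-3}\})$ is at most $m+1$), and any element with $\pi_2(a)=-1$ must have $\pi_1(a)\in\{1,2\}$ (otherwise $a+(0,1)$ completes to a zero-sum with copies of $(1,0)$), so three such elements together with three copies of $(0,1)$ and suitable copies of $(1,0)$ would already give a zero-sum; hence at most two elements of $A'$ have $\pi_2=-1$. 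Putting these at most three elements into $V$ (and filling up to $|V|=6$ arbitrarily), every $u\in U$ satisfies $|\pi_2(u)|\ge 2$, and since $|U|=p-m-5$ one gets $\sum_{u\in U}|\pi_2(u)|\ge 2(p-m-5)\ge p-m-1$ because $p-m\ge 10$. This also shows that your ``degenerate regime'' of a single point $(x,\pm1)$ of large multiplicity cannot occur at all, so the speculative congruence analysis is unnecessary; what your write-up lacks is precisely this control of multiplicities in the rows $\pi_2=\pm1$.
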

\begin{proof}
Every subset $V\subseteq A\setminus\{(1, 0)^{p-3}, (0, 1)^m\}$ with
$|V|=6$ satisfies the condition of Lemma~\ref{Lem:CompAppl}. $A$ contains
at most one element $a$ with $\pi_2(a)=1$ different from $(0, 1)$, and
at most two elements with $\pi_2(a)=-1$, hence, putting these elements
into $V$ we may assume that all elements of $U$ satisfy $|u|\geq
2$. Since $m\geq p/2$, we can apply Lemma~\ref{Lem:CompHilf}, and our
claim follows, if 
\[
p-m-1 \le \sum_{u\in U}|\pi_2(u)|\geq 2|U| = 2(p-m-5),
\]
which is true since $p-m\geq 9$.
\end{proof}

\begin{Lem}\label{Lem:k36}
Suppose $3 \le k \le 6$. Then $A$ contains a zero-sum.
\end{Lem}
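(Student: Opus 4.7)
The plan is to apply Lemma~\ref{Lem:CompAppl} with $m_1 = p-3$ and $m_2 = m$, in the spirit of the proof of Lemma~\ref{Lem:k2}. Since $m \ge p/k$, one checks that $\lceil (p+1)/(m+2)\rceil \le k$, so the condition on $V$ in Lemma~\ref{Lem:CompAppl} reduces to $|\Sigma(\pi_1(V))| \ge 4k-1$; by Lemma~\ref{Lem:CompHilf}, the condition on $U$ reduces to $\sum_{u\in U}|\pi_2(u)| \ge p-m-1$, provided $|\pi_2(u)|\le m$ for every $u\in U$.

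Let $A':=A\setminus\{(1,0)^{p-3},(0,1)^m\}$, of cardinality $p-m+1$. Imitating the proof of Lemma~\ref{Lem:k2}, I would first show that only a bounded number of elements $a\in A'$ have $|\pi_2(a)|=1$: combining $\ell$ copies of a putative $(x,\pm 1)$ with copies of $(0,1)$ and $(1,0)$ produces a zero-sum unless $\ell\le 2$, and the first part of Lemma~\ref{Lem:n-3Diff} limits the possible values of $x$ to a handful. A similar argument controls the total multiplicity of elements of $A'$ with $|\pi_2|>m$. All such exceptional elements are placed into $V$ at the outset.

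I would then complete $V$ by further elements of $A'$ chosen so that $\pi_1(V)$ contains a subset $S$ of $s$ distinct values with $-S\cap S=\emptyset$, where $s=5$ for $k\in\{3,4\}$, $s=6$ for $k=5$, and $s=7$ for $k=6$. Lemma~\ref{Lem:Olson} then yields $|\Sigma(\pi_1(V))|\ge s(s+1)/2\ge 4k-1$. Writing $U=A'\setminus V$, every $u\in U$ satisfies $2\le|\pi_2(u)|\le m$, so
\[
\sum_{u\in U}|\pi_2(u)|\ge 2|U|=2\bigl(p-m+1-|V|\bigr),
\]
which exceeds $p-m-1$ whenever $|V|\le(p-m+3)/2$. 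Since $p-m\ge p/k$ and $p\ge 29$, this bound on $|V|$ comfortably accommodates the $s+O(1)$ elements assembled above.

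The main obstacle is supplying the Olson family: if $\pi_1(A')$ does not contain $s$ distinct values pairwise non-opposite, then $\pi_1(A')$ is concentrated on at most $2(s-1)\le 12$ values, so some element of $A'$ must occur with high multiplicity. In that regime one combines many copies of such an element with the available $(0,1)$'s to produce an element of $\langle(1,0)\rangle$, which is then completed to a zero-sum by copies of $(1,0)$ in the style of Lemma~\ref{Lem:coset}. The finitely many small primes $p$ in each case $k\in\{3,4,5,6\}$ where the numerical margin becomes too tight are handled by \refmc{all}.
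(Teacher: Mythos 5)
Your setup (taking $m_1=p-3$, reducing the $V$-condition of Lemma~\ref{Lem:CompAppl} to $|\Sigma(\pi_1(V))|\ge 4k-1$ via $\lceil\frac{p+1}{m+2}\rceil\le k$, and handling the few elements with $|\pi_2|=1$) is fine and mirrors the $k=2$ case, but the proof breaks at the point where you wave at the elements with $|\pi_2(a)|>m$. To use Lemma~\ref{Lem:CompHilf} you must have $|\pi_2(u)|\le m$ for \emph{every} $u\in U$, and for $3\le k\le 6$ one has $m<p/2$, so there is a genuine middle band of residues $y$ with $m<\ii(y)<p-m$. No ``similar argument'' bounds the number of elements of $A'$ lying there: a single such element together with at most $m$ copies of $(0,1)$ and any copies of $(1,0)$ can never reach the row $\pi_2=0$, so no local zero-sum constrains them, and nothing prevents a large fraction of the $p-m+1$ elements of $A'$ (about $\tfrac56p$ elements when $k=6$, while the band then has width at least $\tfrac35p$) from sitting in this band. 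Such elements can go neither into $U$ (Lemma~\ref{Lem:CompHilf} inapplicable) nor into $V$ (your counting needs $|V|\le\frac{p-m+3}{2}$), so the argument collapses. This is exactly why the paper does not use Lemma~\ref{Lem:CompHilf} for $k\ge 3$: it verifies the $U$-condition of Lemma~\ref{Lem:CompAppl} differently, by partitioning $\pi_2$ of almost all of $A$ into blocks of $k-1$ \emph{distinct} residues, applying Lemma~\ref{Lem:OlsonFmc} to each zero-sum free block and Cauchy--Davenport across blocks (after first splitting off up to two blocks containing zero-sums; three such blocks finish the proof outright). That method lets middle-band elements contribute to the sumset instead of having to be excluded.

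Two further points. The fallback you propose when the Olson family cannot be found is also unsound: $\pi_1(A')$ taking at most $2(s-1)\le 12$ values does not force any single element to have high multiplicity (a column can consist of many distinct elements of moderate multiplicity), and even given an element $(x,y)$ of high multiplicity $t_0$, its copies combined with $(0,1)$'s reach $\langle(1,0)\rangle$ only if some $t\le t_0$ has $\ii(ty)\ge p-m$, which need not happen. Finally, delegating ``the finitely many small primes where the margin gets tight'' to Theorem~\ref{thm:MaxComp}~(1) is vacuous here: that computation only covers $p\le 23$, while this section already assumes $p\ge 29$; the paper instead has to dispose of the tight pairs (e.g.\ $k=6$, $p=29$) by explicit case checks inside the proof, and your version would need an analogous substitute.
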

\begin{proof}
Define $E = A\setminus\{(1, 0)^{p-3}, a\}$, where $a$ is chosen
such that the maximal multiplicity of $\pi_{2}(E)$ is at most $m$.
As $\frac{p}{m} > k-1$,
we can partition $\pi_{2}(E)$ into $\lfloor\frac{p}{k-1}\rfloor$ subsets
$S_{i}$, each one
consisting of $k-1$ distinct elements, and leaving $p \bmod (k-1)$ elements
unused. Let $\ell$ be the number of sets $S_{i}$ containing a zero-sum.

Note first that if $\ell \ge 3$, we are done: each zero-sum comes from a sum
$s$ of elements of $A$ with $\pi_{2}(s) = 0$; together with the elements $(1,0)^{p-3}$,
this yields a zero-sum.
If $\ell < 3$, we apply Lemma~\ref{Lem:CompAppl} to the set
$A'$ which has been obtained from $A$ by removing the pre-image of each set
$S_{i}$ containing a zero-sum, and adding $\ell$ copies of $(1,0)$.
If $A'$ contains a zero-sum, then so does $A$, so we are done if we can
find sets $U, V$ satisfying the prerequisites of the lemma.

Let $m'$ be the multiplicity of $(0,1)$ in $A'$, and set
$k' = \lceil\frac{p+1}{m'+2}\rceil$.
The condition on $V$ is $|\Sigma V| > (2k'-1)(2 - \ell)$;
this is satisfied for any set $V$ with $|V| \ge (2k'-1)(2 - \ell)$.
Note that $k' \le \lceil\frac{p+1}{m}\rceil = \lceil\frac{p}{m}\rceil = k$
as $m' \ge m - 2$ and $m$ does not divide $p$.

Set $\sigma = \frac{(k-1)k}{2}$; by Lemma~\ref{Lem:OlsonFmc},
each set $S_{i}$ has a sumset of cardinality at least $\sigma+1$,
so by Cauchy-Davenport, to get $\Sigma(\pi_{2}(U) \cup \{1^{m'}\}) = \Z_{p}$ it suffices to
ensure that $\pi_{2}(U \cup \{(0,1)^{m'}\})$ contains at least
$\lceil\frac{p-1}{\sigma}\rceil$ of the sets $S_{i}$. Thus we can satisfy
all prerequisites of the lemma if there are at least
$\lceil\frac{p-1}{\sigma}\rceil + \ell$ sets $S_{i}$ in $\pi_{2}(E)$ and
at least $(2k'-1)(2 - \ell)$ additional elements in $A\setminus\{(1, 0)^{p-3}\}$.
In other words, we have to check the inequality
\[
\tag{*}
\big(\lceil\frac{p-1}{\sigma}\rceil + \ell\big)\cdot(k-1) + (2k'-1)(2 - \ell) \le p + 1
.
\]
As $k' \le k$, we may replace $k'$ by $k$. After that, one sees that the
worst case is the one with $\ell = 0$, so the remaining inequality is
\[
\tag{**}
\lceil\frac{p-1}{\sigma}\rceil\cdot(k-1) + 4k-3 \le p
.
\]
Estimating $\lceil\frac{p-1}{\sigma}\rceil \le \frac{p+\sigma-2}{\sigma}$
(and using the definition of $\sigma$) yields
$p \ge \frac{5k^{2}-4k-4}{k-2}$, i.e. $p \ge 29$ for $k=3$, $p \ge 30$ for
$k = 4$, $p\ge 33\frac{2}{3}$ for $k = 5$ and $p \ge 38$ for $k = 6$.
Thus it remains to check the cases $(k,p) = (4,29), (5,29), (5,31), (6,29),
(6,31), (6,37)$. One checks case by case that (**) holds in each of these
cases with exception of $k=6,p=29$. In this last case, we have
$m = 5$ and $k' \le \lceil\frac{p+1}{m+2-\ell}\rceil
= \lceil\frac{30}{7-\ell}\rceil$. If $\ell < 2$, this is equal to $5$ and
if $\ell = 2$, then $k'$ does not appear in (*), hence in (*) we may
replace $k'$ by $5$, $\ell$ by $0$ (which again is the worst case),
and we get $\lceil\frac{28}{15}\rceil\cdot 5 + 9\cdot 2 \le 30$, which is true.
\end{proof}

Theorem~\ref{thm:MaxMult} (1) now follows from
Lemmas~\ref{Lem:n-3:SmallMult}, \ref{Lem:k2}, and \ref{Lem:k36}.

\section{The three largest multiplicities of a zero-sum free set in
  $\Z_p^2$}
\label{sect:3MaxMult}

In this section we prove Theorem~\ref{thm:MaxMult} (4).

Let $A$ be a zero-sum free sequence, $m_1, m_2, m_3$ be the three largest
multiplicities, let $a_1, a_2,a_3$ be the elements with these
multiplicities, and let $\delta=2p-2-m_1-m_2-m_3$ be the number of
remaining elements ($0 \le \delta \le 3$).
We will prove our theorem by a series of restrictions on the possible
shape of $A$, each of which we state as separate lemmas.

In view of Theorem~\ref{thm:MaxMult} (1), we will always suppose
$\max(m_1, m_2, m_3)\leq p - 4$.
\begin{Lem}\label{Lem:atLeast35}
We can suppose that $p\geq 41$ and that $\min(m_1, m_2, m_3)\geq 13$.
\end{Lem}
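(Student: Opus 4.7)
The plan is to handle the two reductions independently, each by applying a previously established result in this paper combined with a short numerical check.

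For the bound $p \geq 41$: the hypotheses available (namely $|A| = 2p-2$, $m_1+m_2+m_3 \geq 2p-5$, and the standing assumption $m_1 \leq p-4$ coming from \refmm{1max}) are exactly the hypotheses of \refmc{3max}, which establishes the existence of a zero-sum in $A$ whenever $p \leq 37$. Hence, assuming that $A$ has no zero-sum, only the case $p \geq 41$ (the next prime after $37$) remains. This justifies the first reduction with no further work.

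For the bound $\min(m_1,m_2,m_3) = m_3 \geq 13$: I would argue by contradiction. Suppose $m_3 \leq 12$. Combined with the global inequality $m_1+m_2+m_3 \geq 2p-5$ this gives $m_1+m_2 \geq 2p-17$, and using $m_1 \leq p-4$ we deduce
\[
m_2 \geq (2p-17) - (p-4) = p - 13.
\]
Having already reduced to $p \geq 41$, the inequality $p-13 \geq 2p/3$ (which is equivalent to $p \geq 39$) holds, so $m_2 \geq 2p/3$. But then \refmm{2max} applies to $A$ and produces a zero-sum, contradicting the assumption that $A$ is zero-sum free. Hence $m_3 \geq 13$ as claimed.

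Both steps are essentially bookkeeping: there is no genuine obstacle, merely the verification of the numerical inequalities and a check that the hypotheses of \refmc{3max} and \refmm{2max} are met. The role of the lemma is to ensure that all multiplicities are large enough (and $p$ large enough) for the subsequent arguments in the section to go through without having to handle small corner cases by hand.
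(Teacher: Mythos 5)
Your proposal is correct and takes essentially the same route as the paper: the case $p\le 37$ is disposed of by the computer result Theorem~\ref{thm:MaxComp}~(\ref{mc:3max}), and the bound on the smallest multiplicity comes from combining $m_1\le p-4$ with $m_2<2p/3$ from Theorem~\ref{thm:MaxMult}~(\ref{mm:2max}). The paper simply computes $m_3\ge (2p-5)-(p-4)-2p/3>p/3-1\ge 12$ directly instead of arguing by contradiction, which is the same calculation.
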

\begin{proof}
The case $p \le 37$ is \refmc{3max} (which has been done by computer). Note that we
only have to choose 3 multiplicities and up to 6 elements in $\Z_p^2$,
hence, these computations are feasible even for rather large value of
$p$. The total computation time was 20 minutes.

The lower bound for $\min(m_1, m_2, m_3)$ follows from the fact that
the largest multiplicity is at most $p-4$, and the second largest is
less than $2p/3$.
\end{proof}

We will not in general assume that $m_1\geq m_2\geq m_3$,
but will restrict different conditions on these integers to exploit
symmetries more efficiently. Choose coordinates such that $a_1=(1,
0)$, $a_2=(0, 1)$. With 
respect to these coordinates we can represent $a_3$ as 
$(x,y)$; without further mentioning we fix this meaning of $x, y$.

\begin{Lem}
\label{Lem:No3y1} We have $y\neq 1$ (and, analogously, $x \neq 1$).
\end{Lem}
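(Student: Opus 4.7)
I would argue by contradiction, assuming $y=1$, so that $a_3=(x,1)$ with $x\in\Z_p^*$ (non-zeroness of $x$ follows from Lemma~\ref{Lem:Gaop} applied to $a_2$ and $a_3$). Since $a_1\in H:=\langle(1,0)\rangle$ and $a_2,a_3\in a_2+H$, if every element of $A$ lay in $H\cup(a_2+H)$ then Lemma~\ref{Lem:coset} would produce an element of multiplicity at least $p-2$, contradicting $\max(m_1,m_2,m_3)\le p-4$ (Lemma~\ref{Lem:atLeast35} together with Theorem~\ref{thm:MaxMult}(1)). Hence some remaining element $b=(u,v)\in A$ satisfies $v\notin\{0,1\}$, and in particular $\delta\ge 1$.

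Next I extract two zero-sum conditions. A zero-sum of the form $\alpha_0 a_1+\alpha_1 a_2+\alpha_2 a_3$ forces $\alpha_1+\alpha_2=p$ (since $m_2+m_3<2p$) and $\alpha_0\equiv -\alpha_2 x\pmod p$; hence such a zero-sum exists iff some $\beta\in J_0:=[p-m_2,m_3]$ satisfies $\beta x\bmod p\ge p-m_1$. Call the negation \emph{condition~(C)}. Similarly, a zero-sum additionally using $b$ exists iff for some $\alpha_2\in J_b:=[\max(0,p-v-m_2),\min(m_3,p-v)]$ one has $u+\alpha_2 x\bmod p\in\{0\}\cup[p-m_1,p-1]$. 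We assume both fail.

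The combinatorial heart of the argument compares the AP's $f(J_0)$ and $f(J_b)$, where $f(\beta):=\beta x\bmod p$. Write $I:=[1,p-m_1-1]$, $L:=|I|=p-m_1-1$ and $N:=|J_0|=L-\delta$; note $2L<p$ since $m_1\ge(2p-5)/3$. Condition~(C) places the $N$ distinct values $f(J_0)$ inside $I$. In the generic range $v\in[2,p-m_3]$ a direct check gives $J_b\supseteq J_0$, and the negation of the $b$-condition then confines these same $N$ values to $(I-u)\bmod p$, hence to $I\cap(I-u)$, an arc of size $\max(0,L-d)$ where $d:=\min(u,p-u)$ is the cyclic distance of $u$ from $0$. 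Accommodating $N=L-\delta$ points therefore forces $d\le\delta\le 3$. In the complementary range $v\in(p-m_3,p-1]$ one has $J_b=[0,p-v]\ni 0$: the instance $\alpha_2=0$ forces $u\in I$, and if $|J_b|>L$ (i.e.\ $v\le m_1+1$) the pigeonhole principle already produces a zero-sum, so one is reduced to $v\ge m_1+2$; a pigeonhole on the arithmetic progression $\{jx\bmod p:j\in J_b\}$ then again confines $(u,v)$ to a narrow window.

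Finally, one eliminates the surviving configurations by means of Lemma~\ref{Lem:replace}(2): choosing $E=\{b,a_1^{u'},a_2^{v'}\}$ for suitable small nonnegative $u',v'$ making $\sum E=k\cdot a$ a large multiple of $a_1$ or $a_2$, the conclusion $|E|\ge k$ combined with the tiny values of $u',v'$ yields a numerical contradiction. The main obstacle will be this last step: the surviving $(u,v,x)$-configurations depend delicately on $\delta$ and on the exact values of $m_1,m_2,m_3$, and producing a zero-sum in each requires an explicit short case analysis that, when $\delta\ge 2$, also exploits the additional rogue elements.
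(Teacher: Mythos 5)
Your reduction steps are fine as far as they go: the use of Lemma~\ref{Lem:coset} to produce a rogue element $b=(u,v)$ with $v\notin\{0,1\}$, the translation of zero-sums built from $a_1,a_2,a_3$ (respectively $a_1,a_2,a_3,b$) into conditions on the progression $\beta x\bmod p$, and the pigeonhole $N\le\max(0,L-d)$ are all correct. But there are two genuine gaps. First, the pigeonhole is vacuous exactly when $N=|J_0|=L-\delta=0$, that is when $\delta=3$ and $m_1=p-1-\delta=p-4$, so that $m_2+m_3=p-1$ and $J_0=\emptyset$. This is not a negligible corner case: it is precisely the configuration into which the paper's own proof is forced (the easy zero-sum $m_3(1,1)+(p-m_3)(1,0)+(p-m_3)(0,1)$ fails only when $\max(m_1,m_2)=p-4$ and $\delta=3$), and your outline says nothing about it; one can partially repair this by pigeonholing on $u+\beta x$ for $\beta\in J_b$ itself rather than on $J_0\subset J_b$, but that again only narrows $(u,v,x)$. (A smaller point: your inequality $m_1\ge(2p-5)/3$ presupposes that $m_1$ is the largest of the three multiplicities, an ordering the section deliberately does not fix; this is repairable but should be addressed.)

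Second, and more seriously, the concluding step is a declaration of intent rather than a proof. After the pigeonhole you know only $|u|\le\delta\le 3$ (or, in the other range, $u\in[1,p-m_1-1]$ and $v\ge m_1+2$); such configurations are genuinely close to the extremal examples and are not killed by a generic appeal to Lemma~\ref{Lem:replace}(2). In particular, for $E=\{b,a_1^{u'},a_2^{v'}\}$ to have sum a multiple of $a_1$ or of $a_2$ one needs $v'\equiv-v$ or $u'\equiv-u\pmod p$, and these exponents are in general not small (and may exceed $m_2$ resp.\ $m_1$), so the scheme you sketch does not obviously apply to a single rogue element. The paper's proof spends almost all of its effort exactly here: in the surviving case $m_1=p-4$, $\delta=3$ it pins down the three extra elements by repeated applications of Lemma~\ref{Lem:2n-9Diff} (for instance to $\pi_2(3(0,1))=\pi_2(3(1,1))$) and of Lemma~\ref{Lem:replace} with $E$ consisting of a rogue element together with copies of $(1,1)$ or $(2,1)$, and only then reaches a contradiction. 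Your ``explicit short case analysis'' is precisely this missing content, and you acknowledge as much; as written, the argument establishes constraints on $(u,v,x)$ but never a contradiction, so the lemma is not proved.
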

\begin{proof}
We first show that $(x, y)=(1, 1)$ is impossible. We try to form the
zero-sum $m_3(1, 1)+(p-m_3)(1, 0)+(p-m_3)(0, 1)$, which is possible,
unless $m_3+\min(m_1, m_2)<p$, that is, $\max(m_1, m_2)\geq
p-1-\delta \ge p-4$; since $(x, y)=(1, 1)$ we have still one symmetry
at our disposal and may suppose that $m_1\geq m_2$.
By part (1) of Theorem~\ref{thm:MaxMult}, we get $m_1=p-4$ and $\delta=3$.

Suppose first
that there is an element $a\in A$ different from $a_2, a_3$ satisfying
$\pi_2(a)=1$. We apply Lemma~\ref{Lem:2n-9Diff} to the equation
$\pi_2(a)=\pi_2((0, 1))$ and obtain a contradiction, unless
$|\pi_1(a)|\leq 2$. The same argument applied with $(1, 1)$ instead of
$(0, 1)$ yields $|\pi_1(a)-1|\leq 2$, thus, $a=(2, 1)$ or $a=(-1,
1)$. If there were such an element, we could form the zero-sum
\[
m_3(1, 1)+a+(p-m_3-1)(0, 1)+(p-m_3-\pi_1(a))(1, 0),
\]
note that the required multiplicity of $a_1$ poses no problem, since
\[
m_1 = p-4\geq p-m_3-\pi_1(a).
\]
We now apply Lemma~\ref{Lem:2n-9Diff} to the equation $\pi_2(3(0,1))
= \pi_2(3(1, 1))$, and obtain a contradiction, provided that
\[
-3\in\Sigma(\{1^{m_2+m_3-6}\} \cup (A\setminus\{a_1^{m_1}, a_2^{m_2},
a_3^{m_3}\}).
\]
Let $b_1, b_2, b_3$ be the three elements in $A$ different from $a_1,
a_2, a_3$. Since $m_2+m_3-6 = p-7 >p/2$, we get our contradiction
unless either $\pi_2(b_1)+\pi_2(b_2)+\pi_2(b_3)\leq 3$
(which is impossible), or one of the
three elements, say $b_1$, satisfies $\pi_2(b_1) =: -k \in\{-1, -2\}$.
Applying Lemma~\ref{Lem:replace} to $E := \{b_1,(1,1)^{k}\}$
yields a contradiction,
unless we have $b_1=(1, -k)$. However, even in these
cases we can apply part (1) of Lemma~\ref{Lem:replace},
thus, $A' = A\setminus\{b_1, (1,
1)^k\}\cup\{(1, 0)^{k+1}\}$ is zero-sum free. Since $m_3>3$, we find
that all elements in $A'$ satisfy $\pi_2(a)=0, 1$. However
$b_2$ and $b_3$ contradict this.

Hence, the assumption $(x, y)=(1, 1)$ leads to a
contradiction. Moreover, we can change the roles of $a_2$ and $a_3$
and find that $(x,y)=(-1,1)$ is also impossible.

Thus, $m_1=p-4$, and $|x|\geq 2$. From Lemma~\ref{Lem:2n-9Diff} we
immediately find $|x|\leq 2$, and exploiting the symmetry between
$a_2$ and $a_3$ we may assume that $x=2$. We now apply
Lemma~\ref{Lem:2n-9Diff} to the equation $\pi_2(2(0, 1))=\pi_2(2(x,
1))$, and obtain a contradiction, provided that
$-2\in\Sigma(\pi_2(A)\setminus\{1^4\})$. But $\pi_2(A)$ contains 1
with multiplicity $\geq p-5$, hence, we are done unless there is an
element in $A$ with $\pi_2(a)=-1$. But then we can replace $a$ and one
copy of $(2, 1)$ by at least three copies of $(1, 0)$, and therefore
obtain a zero-sum.
\end{proof}

\begin{Lem}
\label{Lem:no3y-1}
We have $y\neq -1$ (and, analogously, $x \neq -1$).
\end{Lem}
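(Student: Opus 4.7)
The plan is to attack the hypothesis $a_3=(x,-1)$ directly by trying to form zero-sums of the shape
$j\cdot a_3 + j\cdot a_2 + \ii(-jx)\cdot a_1$.
Since $a_3+a_2 = (x,0) \in\langle a_1\rangle$, such a sum genuinely lies in $A$ and vanishes whenever $j \le K := \min(m_2,m_3)$ and $\ii(-jx)\le m_1$.

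First I would clear the exceptional values of $x$: $x\ne 0$ by Lemma~\ref{Lem:Gaop}, $x\ne 1$ by Lemma~\ref{Lem:No3y1}, and $x=-1$ gives the immediate zero-sum $a_3+a_1+a_2=0$. So one may assume $x \in \Z_p\setminus\{-1,0,1\}$.

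If the construction produces no zero-sum for any $j\in\{1,\dots,K\}$, then the set $S := \{jx \bmod p : 1\le j\le K\}$ is contained in $[1, p-m_1-1]$, and $|S|=K$ because $\gcd(x,p)=1$. The key point is that the hypotheses force $|S|$ to match the size of this ambient interval. Using $\max(m_1,m_2,m_3)\le p-4$ (the running assumption of the section) together with $m_1+m_2+m_3\ge 2p-5$,
\[
K+m_1 \;=\; m_1+m_2+m_3 - \max(m_2,m_3) \;\ge\; (2p-5)-(p-4) \;=\; p-1,
\]
so $K\ge p-m_1-1$. Combined with $S\subseteq[1,p-m_1-1]$, this forces $K=p-m_1-1$ and $S=[1,K]$ as sets.

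The final step computes $\sum_{j=1}^K(jx\bmod p)$ in two ways. On one hand it equals $K(K+1)/2$ since $S=[1,K]$; on the other, reducing $\sum_j jx = xK(K+1)/2$ modulo $p$ shows the same sum is $\equiv xK(K+1)/2\pmod p$. Hence $(x-1)K(K+1)/2 \equiv 0\pmod p$. By Lemma~\ref{Lem:atLeast35} we have $m_1\ge 13$ and therefore $K\le p-14$, so neither $K$ nor $K+1$ is divisible by $p$ and $K(K+1)/2$ is invertible modulo $p$; this yields $x\equiv 1\pmod p$, contradicting $x\ne 1$. The delicate point is that the inequality $K+m_1\ge p-1$ is sharp: without it, $S$ could be a proper subset of $[1,p-m_1-1]$ and the sum-counting step would collapse.
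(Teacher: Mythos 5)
Your proof is correct and takes essentially the same route as the paper: both exploit that $a_2+a_3=(x,0)\in\langle a_1\rangle$, reduce to the one-dimensional multiset $\{1^{m_1},x^{\min(m_2,m_3)}\}$ in $\langle(1,0)\rangle$, use $m_1+\min(m_2,m_3)\geq p-1$ (from $m_1+m_2+m_3\geq 2p-5$ and the bound $p-4$ on the multiplicities) to force $x=1$, and then contradict Lemma~\ref{Lem:No3y1}. The only difference is the one-dimensional endgame: the paper cites Lemma~\ref{Lem:Onedim}(1) (the equality case of the subset-sum count), whereas you re-derive that special case by hand, showing that multiplication by $x$ must permute the interval $[1,p-m_1-1]$ and deducing $x\equiv 1$ from the residue-sum computation.
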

\begin{proof}
We now replace one copy of $(0, 1)$ and one copy of $(x, -1)$ by one
copy of $(x, 0)$, until we run out of elements of the form $(x, -1)$ or $(0,
1)$. In this way we obtain $\min(m_2, m_3)$ elements $(x, 0)$, hence,
for $A$ to be zero-sum free it is necessary that $\{1^{m_1},
x^{\min(m_2, m_3)}\}$ be zero-sum free. But $m_1+\min(m_2, m_3)\geq
p-1$, thus, this set is zero-sum free if and only if it is constant,
that is, $x=1$, and we are in the case covered by Lemma~\ref{Lem:No3y1}.
\end{proof}

\begin{Lem}
\label{Lem:threen-5}
We have $m_1<p-5$.
\end{Lem}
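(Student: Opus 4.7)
The plan is to argue by contradiction. Suppose $m_1\ge p-5$, so that $k_1:=p-m_1\in\{4,5\}$ by Lemma~\ref{Lem:atLeast35} and Theorem~\ref{thm:MaxMult}(1). Throughout, I may use $p\ge 41$, $m_2,m_3\ge 13$ (Lemma~\ref{Lem:atLeast35}), and $|x|,|y|\ge 2$ (Lemmas~\ref{Lem:No3y1} and \ref{Lem:no3y-1}); note also that $m_2+m_3\ge p-1$ from the hypothesis of Theorem~\ref{thm:MaxMult}(4).

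The first step is to pin down the coordinates of $a_3=(x,y)$. I apply Lemma~\ref{Lem:2n-9Diff} to $B=\{(0,1)^{\ii(y)},a_3\}$, whose two halves both have $\pi_2$-sum $y$: the conclusion $|x|\le k_1-2\le 3$ then holds whenever $-y\in\Sigma(\pi_2(A\setminus B))$. When $\ii(y)\le k_1-2$ the "in particular" clause of the lemma handles this for free; for larger $\ii(y)$ I verify the hypothesis directly by solving $j\equiv -(k+1)y\pmod p$ with $k\in[0,m_3-1]$ and $j\in[0,m_2-\ii(y)]$, using $m_2+m_3\ge p-1$ to guarantee a solution. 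An analogous application of the coordinates-exchanged form of Lemma~\ref{Lem:2n-9Diff} with $B=\{(1,0)^{\ii(x)},a_3\}$ gives $|y|\le k_2-2$; here the symmetric $\Sigma$-hypothesis on $-x$ is trivial because $|x|\le 3$ already.

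Next I attempt the direct zero-sum
\[
k\cdot a_3+\ii(-kx)\cdot a_1+\ii(-ky)\cdot a_2,\qquad k\in[1,m_3],
\]
which has total length at most $2p-2$ and is a zero-sum whenever $\ii(kx)\ge k_1$ and $\ii(ky)\ge k_2$. Let $X,Y\subseteq[1,m_3]$ be the "bad" sets for the two conditions. For each of the $k_1-1\le 4$ target residues in $[1,k_1-1]$, the congruence $kx\equiv r\pmod p$ has a unique solution in $[1,p-1]$, hence $|X|\le k_1-1$; similarly $|Y|\le k_2-1$ because $y$ is a unit. Therefore a zero-sum exists as soon as $m_3>|X|+|Y|$, in particular whenever $m_3>k_1+k_2-2$.

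The main obstacle is the residual regime $m_3\le k_1+k_2-2$: combined with $m_2+m_3\ge p-1$ and Theorem~\ref{thm:MaxMult}(3), this forces $m_2$ and $m_3$ both close to $2p/3$, a narrow range of parameters. Here I plan to invoke Lemma~\ref{Lem:replace} applied to several choices of small $E\subseteq A$ whose sum is a multiple of one of $a_1,a_2,a_3$ (for instance $E=\{a_3^j,(1,0)^{\ii(-jx)}\}$ for the smallest $j$ with $\ii(jx)\ge k_1$, which has sum $\ii(jy)\cdot a_2$): each such replacement either pins $(x,y)$ down to an explicit short list of candidates - each killed by a direct zero-sum, possibly involving one of the $\delta\le 3$ extra elements of $A$ - or produces a zero-sum-free subset of $\Z_p^2$ whose largest multiplicity exceeds $p-5$, contradicting \refmm{1max} or \refmm{2max}.
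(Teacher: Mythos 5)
Your middle step is sound: the direct zero-sum $k\cdot a_3+\ii(-kx)\cdot a_1+\ii(-ky)\cdot a_2$ with the bad sets $X,Y$ of sizes $\le k_1-1$, $\le k_2-1$ correctly settles the case $m_3\ge k_1+k_2-1$. But the complementary regime $m_3\le k_1+k_2-2$ is exactly where the difficulty of this lemma sits, because $k_2=p-m_2$ may be of order $p/3$, and there you offer only a plan (``I plan to invoke Lemma~\ref{Lem:replace} \dots either pins $(x,y)$ down \dots or produces \dots''), not an argument; moreover your claim that this regime forces $m_2$ and $m_3$ both close to $2p/3$ is unjustified --- it only forces $m_2+m_3\le p+k_1-2$, which together with $m_2+m_3\ge p-1$ pins the \emph{sum} but still allows, say, $(m_2,m_3)\approx(3p/5,2p/5)$. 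The paper closes precisely this regime by a sharper count: among the multiples $n\cdot a_3$, $1\le n\le m_3$, whose second coordinate lies in $[p-m_2,p]$ (so that copies of $a_2$ can kill it), at most $k_1-1\le 4$ can have first coordinate in the forbidden window of length $p-m_1-1\le 4$, since these first coordinates are pairwise distinct; hence five such multiples already give a zero-sum. Counting how often $ny\bmod p$ falls into the top window then forces $|y|$ down iteratively ($m_3|y|\le 5p$ gives $|y|\le 10$, then $|y|\le 5$, then $|y|\le 3$), leaving only $|y|=2$, $m_3=\frac{p-1}{2}$, which is excluded by rewriting $a_2$ in the basis $(a_1,a_3)$ and deriving $4\equiv 1\pmod p$. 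Nothing playing this role appears in your sketch, so the proof is incomplete.

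There is also a genuine flaw in your first step. To verify $-y\in\Sigma(\pi_2(A\setminus B))$ for $B=\{(0,1)^{\ii(y)},a_3\}$ you solve $j\equiv-(k+1)y\pmod p$ with $j\in[0,m_2-\ii(y)]$, $k\in[0,m_3-1]$; by Cauchy--Davenport (or pigeonhole on the $m_2-\ii(y)$ admissible values of $k+1$) this is only guaranteed when $m_2+m_3\ge p+\ii(y)$, not when $m_2+m_3\ge p-1$, and since $\ii(y)\ge 2$ and may be large the hypothesis can fail. In addition, when $\ii(y)>m_2$ the multiset $B$ is not even contained in $A$, and this case is never addressed. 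As it happens, your zero-sum argument in step two does not use the conclusion $|x|\le 3$, so this flaw mainly infects step three --- which is the part that is missing anyway.
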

\begin{proof}
Suppose otherwise. Then $m_2, m_3\in[\frac{p-1}{2},
\frac{2p}{3}]$. Suppose that in the sequence $ny\bmod p$, $1\leq n\leq
m_3$, there are 5 elements in $[p-m_2, p]$. Since all these
elements have different value under $\pi_1$, one of them satisfies
$\pi_1(n(x, y))\not\in[1, 4]$, and this multiple can be combined with
certain copies of $(1, 0)$ and $(0, 1)$ to a zero-sum. If $m_3|y|>5p$,
we can choose integers $0<n_1<\dots<n_5\leq m_3$, such that $n_iy\bmod
p\in[p-y+1, p]$, and our claim follows. Hence, $|y|\leq 10<p/4$, that
is, the same argument yields $m_3|y|<3p$. We now repeat this argument
to obtain $|y|\leq 5<p/8$, which implies $m_3|y|\leq 2p$, which again
implies $|y|\leq 3$, which yields $m_3|y|<p$, which is only possible
if $m_3=\frac{p-1}{2}$, and $y=2$, that is, $a_3=2a_2+ya_1$. Choosing
$a_1, a_3$ as a basis we find that $a_2=2a_3+y'a_1$, that is,
$a_3=4a_3+y'' a_1$, which is impossible since $a_1$ and $a_3$ are
linearly independent, and $4\not\equiv 1\;(p)$. Hence, our claim follows.
\end{proof}

From now on we shall assume that $m_3$ is the least of the three
multiplicities. 
We continue to assume $a_1=(1, 0)$, $a_2=(0, 1)$ and $a_3 = (x,y)$,
and we choose $a_1$, $a_2$ in such a way that $x\geq y$. Note that the
upper bound $\max(m_1, m_2)\leq p-6$, immediately implies the lower
bounds $m_3>\frac{p}{3}+1$ and $\min(m_1,m_2)>\frac{p}{2}$. 

\begin{Lem}
\label{Lem:Nosmally}
We have the two inequalities
$y\geq\frac{m_1+m_2-p+2}{2}\geq\frac{p-4}{6}$; in particular, $y\geq 7$.
\end{Lem}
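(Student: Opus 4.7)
The second inequality is a routine calculation: since $m_3$ is the smallest of the three multiplicities, $m_1+m_2\ge 2m_3$, and combined with $m_1+m_2+m_3\ge 2p-5$ this gives $m_1+m_2\ge\tfrac{4p-10}{3}$, hence $\tfrac{m_1+m_2-p+2}{2}\ge\tfrac{p-4}{6}$. Since $p\ge 41$ by Lemma~\ref{Lem:atLeast35}, we have $\tfrac{p-4}{6}>6$, so any integer $y$ meeting this lower bound must be $\ge 7$.

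For the main inequality, I will suppose for contradiction that $k_1+k_2+2y\le p+1$, where $k_i:=p-m_i$, and produce a zero-sum in $A$. The basic mechanism is that any $n\in[1,m_3]$ with $nx\bmod p\ge k_1$ and $ny\bmod p\ge k_2$ immediately yields the zero-sum
\[
n\cdot a_3+(p-(nx\bmod p))\cdot a_1+(p-(ny\bmod p))\cdot a_2=0
\]
in $\Z_p^2$: the required numbers of copies of $a_1$ and $a_2$ are bounded by $m_1$ and $m_2$ respectively. So it suffices to exhibit such an $n$.

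To this end, I restrict to the first ``cycle'' $I_1=[\lceil k_2/y\rceil,\lfloor(p-1)/y\rfloor]$, on which $ny\bmod p=ny\in[k_2,p-1]$. One checks $I_1\subseteq[1,m_3]$ easily: from $y\ge 2$ (by Lemmas~\ref{Lem:No3y1} and~\ref{Lem:no3y-1}) we get $\lfloor(p-1)/y\rfloor\le(p-1)/2$, while $m_3\ge 2p/3-3>(p-1)/2$ for $p\ge 41$. For $n\in I_1$ the values $nx\bmod p$ form an arithmetic progression in $\Z_p$ with step $x$ and length $\ell\ge(m_2-y+1)/y$. Because $x\ge y$, its span satisfies $(\ell-1)x\ge m_2-2y+1$, and our hypothesis rewrites precisely as $m_2-2y+1\ge k_1$; hence the progression has span at least $k_1$, so it cannot be contained in the arc $[1,k_1-1]$. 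Consequently some $n\in I_1$ satisfies $nx\bmod p\ge k_1$, yielding the desired zero-sum and the contradiction.

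The delicate point --- and the main potential obstacle --- is that the arithmetic progression can ``wrap'' in $\Z_p$ when $x$ is close to $p$ (specifically $x\ge m_1+2$), in which case the naive span estimate is inadequate. In that regime the effective step is $p-x\le k_1-2$, and one must redo the escape argument with $\min(x,p-x)$ in place of $x$; the assumption $x\ge y$, together with the hypothesis $k_1+k_2+2y\le p+1$ and the lower bound $m_3\ge 2p/3-3$, still forces the progression to meet $[k_1,p-1]$, possibly after invoking Lemma~\ref{Lem:replace} (swapping $a_3$'s against $a_1$'s to drive up $m_1$ and contradict Lemma~\ref{Lem:threen-5}) to eliminate the remaining edge cases.
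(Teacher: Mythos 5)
Your second-inequality computation and the basic mechanism (a zero-sum $n\cdot a_3+(p-(nx\bmod p))\cdot a_1+(p-(ny\bmod p))\cdot a_2$ whenever $n\le m_3$, $nx\bmod p\ge k_1$ and $ny\bmod p\ge k_2$) are correct, and your non-wrapping escape argument is essentially the paper's treatment of the case $|x|\ge y$. However, there are two genuine gaps. First, the lower bound $m_3\ge 2p/3-3$, which you use to get $I_1\subseteq[1,m_3]$ (and again at the end), is false. At this point the only available bounds are $m_1\le p-6$ (Lemma~\ref{Lem:threen-5}) and second-largest multiplicity $<2p/3$, which give only $m_3>\frac{p}{3}+1$; and since only $y\neq\pm1$ is known (Lemmas~\ref{Lem:No3y1} and~\ref{Lem:no3y-1}), the value $y=2$ is still possible, in which case $\lfloor(p-1)/y\rfloor\approx p/2$ can exceed $m_3$. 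Then your progression must be truncated at $n=m_3$, its span drops to roughly $y\,m_3+m_2-p-y$, and a separate numerical argument is needed to show this still forces an escape from $[1,k_1-1]$; the paper does exactly this, bounding $|x|\bigl(\min(m_3,\lfloor p/y\rfloor)-\lceil(p-m_2)/y\rceil\bigr)\le p-m_1-2$ and excluding the branch where the minimum is attained at $m_3$ via $m_1+m_2+2m_3>2p$. This is repairable, but as written it rests on an incorrect inequality.

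Second, the wrap-around regime is flagged but not proved. When $p-x<y$ (exactly the paper's case $|x|<y$, given the normalization $x\ge y$), the effective step $p-x$ can be as small as $1$, so the (truncated) progression can easily remain inside $[1,k_1-1]$; ``redo the argument with $\min(x,p-x)$'' only works when $p-x\ge y$, and the closing sentence about using Lemma~\ref{Lem:replace} to drive up $m_1$ is a conjecture, not an argument (and again invokes the false $m_3$ bound). The paper disposes of this case by a different, short argument: take $k$ minimal with $ky\ge p-m_2$; then $k\le m_3$, and under the contradiction hypothesis $ky\le p-m_2+y-1\le m_1$, hence also $k(p-x)<ky\le m_1$, so $k\cdot a_3$ together with at most $m_1$ copies of $a_1$ and at most $m_2$ copies of $a_2$ yields a zero-sum. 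Until the truncation issue and the $p-x<y$ case are actually carried out, your proof only covers the regime $y\,m_3\ge p-1$ and $p-x\ge y$.
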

\begin{proof}
The second inequality just follows from $m_1+m_2\geq\frac{2(m_1+m_2+m_3)}
{3}\geq \frac{4p-10}{3}$.

For the first inequality, we distinguish the cases $|x| < y$ and $|x| \geq y$.
Suppose first $|x| < y$. By our general assumption $x \ge y$, we have
$p-x < y$. 
Let $k$ be the smallest integer such that $ky \ge p-m_2$.
Since $y \ge 2$ and $m_3 > \frac{p}{3}$, we have $k \le m_3$.
Assuming $y < \frac{m_1+m_2-p+2}{2}$, we want to show that $k\cdot
(p-x) \le m_1$ and $ky \le p$ to get a contradiction. By $p-x < y$, it
suffices to show that $ky \le m_1$. But $ky - m_1 \le p-m_2+y-m_1 <
\frac{p-m_1-m_2+2}{2} \le 0$ for $p \ge 16$.

Now suppose $|x| \ge y$. Set $k = \lceil\frac{p-m_2}{y}\rceil$ and
$\ell = \min(m_3, \lfloor \frac{p}{y}\rfloor)$.  We have $k \le \ell$
since $m_3\cdot y > (\frac{p}{3}+1)\cdot 2 \ge p-m_2$, so it makes
sense to consider the expressions $k\cdot(x, y)+(p-k y)\cdot(0, 1)$,
$(k+1)\cdot(x, y)+(p-(k+1) y)\cdot(0, 1)$, \dots, $\ell\cdot(x,
y)+(p-\ell y)\cdot(0, 1)$. By the choice of $k$ and $\ell$, each of
these expressions is contained in $\Sigma(\{a_2^{m_2}, a_3^{m_3}\})$,
and each one has second coordinate zero. Hence, we obtain an
arithmetic progression in $\langle(1,  0)\rangle$ of length $\ell - k+1$ with
difference $|x|$. This implies $(\ell - k)|x|\leq p-m_1-2$. We obtain
\[
|x|\bigg(\min\Big(m_3, \lfloor\frac{p}{y}\rfloor\Big)
-\lceil\frac{p-m_2}{y}\rceil\bigg)\leq p-m_1-2,
\]
which, by $|x|\ge y$ implies 
\[
\min(ym_3+m_2-p-y, m_2-2y) \leq p-m_1-2.
\]
If the first term in the minimum is smaller, we obtain (using $y \ge 2$)
$m_1+m_2+2m_3\leq pn$, which is impossible. Hence,
$y\geq\frac{m_1+m_2+2-p}{2}$.
\end{proof}

\begin{Lem}
\label{Lem:yBound}
Suppose that $m_3$ is the least of the three multiplicities, and that
$x\geq y$. Then $y>\frac{3}{10}n$.
\end{Lem}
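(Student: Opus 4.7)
The plan is to strengthen the arithmetic-progression argument in the proof of Lemma~\ref{Lem:Nosmally} by taking into account all ``wraps'' of the sequence $(ky \bmod p)_{k=1,\dots,m_3}$, rather than only the first one. Assume for contradiction that $y \le \frac{3p}{10}$; combined with Lemma~\ref{Lem:Nosmally}, this confines $y$ to a narrow window and forces $\lfloor p/y\rfloor \in \{3,4,5,6\}$.

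The first key step is to introduce the set
\[
K = \{k \in [1, m_3] : ky \bmod p \in [p - m_2, p - 1]\}.
\]
For each $k \in K$, the element $k a_3 + (p - (ky \bmod p))\,a_2 = (kx \bmod p, 0)$ lies in $\Sigma(A)$, so zero-sum-freeness of $A$ forces $kx \bmod p \in [1, p-m_1-1]$. Since $k \mapsto kx \bmod p$ is injective on $[1, p-1]$, this immediately gives the upper bound $|K| \le p - m_1 - 1$. Decomposing $K = \bigsqcup_j K_j$ by wrap index $j = \lfloor ky/p\rfloor$, each $K_j$ is an interval of consecutive integers with $|K_j| \in \{\lfloor m_2/y\rfloor, \lceil m_2/y\rceil\}$, and the number of nonempty clusters is approximately $m_3 y/p$. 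Summation then yields the lower bound $|K| \ge m_2 m_3/p - O(1)$. Combined with the hypotheses $m_1 + m_2 + m_3 \ge 2p-5$, $m_1 \le p - 6$ (Lemma~\ref{Lem:threen-5}), and $m_3 \ge 13$ (Lemma~\ref{Lem:atLeast35}), this should contradict the upper bound as soon as $y \le \frac{3p}{10}$.

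As a secondary refinement, within each cluster $K_j$ the values $(kx \bmod p)_{k \in K_j}$ form an arithmetic progression of common difference $x \bmod p$ that must fit inside $[1, p - m_1 - 1]$, giving the inequality $(|K_j| - 1) \cdot |x| \le p - m_1 - 2$. This refinement is needed in the delicate subcase $|x| < y$ (equivalently $x > p - y$), where the length of an individual cluster is too short to force a direct contradiction on its own; in that subcase the symmetric argument, completing copies of $a_3$ by copies of $a_1$ to land in $\langle a_2 \rangle$, must be run in parallel and the resulting bounds combined.

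The main obstacle is making the lower bound $|K| \ge m_2 m_3/p - O(1)$ rigorous and sharp: the crude average estimate may be violated for particular values of $y/p$, so one has to estimate the cluster sizes directly in terms of $\lfloor p/y\rfloor$, handling carefully the first and last partial wraps. A secondary difficulty is closing the case $|x| < y$, where a single arithmetic progression is not enough and one must combine bounds from both coordinate directions.
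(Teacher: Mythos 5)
Your starting mechanism is the same as the paper's: for every $k\le m_3$ with $ky \bmod p \in[p-m_2,p-1]$, the sum $k a_3+(p-(ky\bmod p))a_2=(kx \bmod p,0)$ lies in $\Sigma(A)$, so zero-sum freeness forces $kx\bmod p\in[1,p-m_1-1]$, and injectivity of $k\mapsto kx$ gives $|K|\le p-m_1-1$. That part is fine. The gap is the lower bound $|K|\ge m_2m_3/p-O(1)$, which is not a technicality you can postpone: it is false as stated, and repairing it is essentially the whole content of the lemma. Each of the roughly $m_3y/p$ wraps contributes the number of multiples of $y$ in an interval of $m_2$ consecutive integers, which can fall short of the average $m_2/y$ by almost $1$; the only bound valid for \emph{every} $y$ is therefore of the shape $|K|\ge\lfloor m_3y/p\rfloor\cdot\lfloor (m_2-1)/y\rfloor\approx m_3(m_2-y)/p$, i.e.\ the error is $\Theta(m_3y/p)=\Theta(p)$, not $O(1)$. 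Whether the true count is close to the average depends on the Diophantine nature of $y/p$ (how slowly $(j+1)p\bmod y$ moves), which the hypothetical set controls through its choice of $a_3$ --- and the dangerous $y/p$ are exactly those near fractions with denominator at most $10$, i.e.\ the entire range $p/7<y\le 3p/10$ you are working in. Moreover, even if you grant perfect equidistribution, the resulting inequality $m_2m_3>p(p-m_1-1)$ is not strong enough on all of the admissible parameter region: take $m_2\approx p-6$ (the largest multiplicity), $m_1\approx 0.55p$, $m_3\approx 0.45p$, $y\approx 0.28p$; this is compatible with everything proved before the lemma (in particular with Lemma~\ref{Lem:Nosmally}, which only forces $y\gtrsim 0.275p$ here, and with the bound $2p/3$ on the second largest multiplicity), yet $m_2m_3/p\approx 0.45p\approx p-m_1$, so there is no contradiction; with the honest floor bound the inequality also fails outright, e.g.\ around $m_1\approx 0.7p$, $m_2$ just below $2p/3$, $m_3\approx 0.63p$.

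This is why the paper does not count obstructing $k$ at all, but uses each one multiplicatively: $k$ obstructing forces $x/p$ into a union of $k$ intervals of length $\frac{p-m_1}{kp}$ around the fractions $a/k$. The proof then splits $y/p$ into subranges of $(1/7,3/10]$, shows for each subrange that a specific handful of small integers among $2,\dots,10$ is obstructing, and intersects the corresponding conditions on $x/p$ with $x\ge y$, $m_1>p/2$ and $m_1+m_2+m_3\ge 2p-5$ until no admissible $x$ remains. Your ``secondary refinement'' (consecutive $k$ in one cluster give an arithmetic progression with difference $x$ inside the window, hence $(|K_j|-1)|x|\le p-m_1-2$) is a step in this direction, but in your write-up it is only invoked for the subcase $|x|<y$ and never developed into the case analysis that would be needed; as it stands, the central estimate of your argument is both unproven and, in the regimes above, insufficient even in its idealized form.
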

\begin{proof}
For $p\geq 41$, we have $\frac{p-4}{6}>\frac{p}{7}$, hence, in
view of Lemma~\ref{Lem:Nosmally} we may assume that
$y>\frac{p}{7}$. Call an integer $k$ obstructing, if $k\leq m_3$, 
and $ky\bmod p\in[p-m_2, p]$. This definition is motivated by the fact
that if $k$ is obstructing, then
\[
\frac{x}{p}\in\bigcup\limits_{a=0}^{k-1}(\frac{a}{k},
\frac{a}{k}+\frac{p-m_1}{kp}),
\]
that is, we obtain obstructions on the possible values of $x$ (see
Figure~\ref{fig:obstruct}). For
different ranges of $y$, we obtain different obstructing integers, and
we will obtain a contradiction by showing that no possibility for $x$
remains.

\begin{figure}
\begingroup
\newlength\ole
\newlength\oy
\oy=0cm
\psset{unit=10cm,dimen=middle,linewidth=0.1mm,hatchwidth=0.1mm}
\SpecialCoor
\def\obstruct#1#2{%
  \ole=#2\psunit%
  \multirput(0,\oy)(\ole,0){#1}{\psline(0.5\ole,0)(\ole,0)}%
  \rput[r](-1mm,\oy){#1}%
  \advance\oy by -4mm%
}
\def\vertbar#1#2{%
  \psline(#1,1mm)(#1,\oy)%
  \rput(0,-1mm){\rput[t](#1,\oy){#2}}%
}

\begin{pspicture}(0,-25mm)(1,0)
\obstruct{2}{0.5}
\obstruct{3}{0.3333333}
\obstruct{4}{0.25}
\obstruct{5}{0.2}
\obstruct{6}{0.1666667}
\obstruct{10}{0.1}

\advance\oy by 3mm
\vertbar{0}{0}
\vertbar{1}{1}
\vertbar{0.1428571}{$\frac17$}
\vertbar{0.7}{$\frac7{10}$}
\vertbar{0.75}{$\frac3{4}$}
\vertbar{0.8}{$\frac45$}
\vertbar{0.8333333}{$\frac56$}
\vertbar{0.875}{$\frac78$}
\end{pspicture}
\endgroup
\caption{Obstructions on $\frac xn$ for $m_1 = \frac{n}{2}$ and different $k$.}
\label{fig:obstruct}
\end{figure}

We first deal with the range $\frac{p}{7}<y\le\frac{p}{5}$.
Then 4, 5 and at least one of 3, 6 are
obstructing. Using the bound $m_1>p/2$ and $x \ge y > \frac{p}{7}$,
we obtain $\frac{x}{p}\in(\frac{4}{5},
\frac{7}{8})$, and that not both 3 and 6 can be obstructing. If
$y<\frac{p}{6}$, this implies that $m_2<\frac{4}{7}p$,
$\frac{x}{p}\in(\frac{5}{6},\frac{7}{8})$, and $m_1<\frac{2}{3}p$. Hence,
$2p-5<\frac{2}{3}p+\frac{8}{7}p=\frac{38}{21}n$, which is impossible
for $p\geq 41$. If
$y>\frac{p}{6}$, we obtain $\frac{x}{p}\in(\frac{4}{5}, \frac{5}{6})$,
and $m_1<\frac{3}{5}p$, hence $m_2 > \frac{4}{5}p - 5$.
For $p\geq 41$, we obtain
$m_2>\frac{2}{3}p$, which implies that 2 is obstructing, and gives a
contradiction.

Next, suppose that $\frac{p}{5}<y\le\frac{p}{4}$. If
$m_2\geq\frac{3}{5}p$, then $2, 3, 4$ are obstructing, and we
immediately obtain a contradiction. Otherwise, 3, 4 and 8 are
obstructing, and we obtain $\frac{x}{n}\in (\frac{3}{4},
\frac{2}{3}+\frac{p-m_1}{3p})$. Suppose that $y\leq\frac{2p}{9}$. Then
9 is obstructing, and we obtain that the intervals $(\frac{2}{3},
\frac{2}{3}+\frac{p-m_1}{3p})$ and $(\frac{7}{9},
\frac{7}{9}+\frac{p-m_1}{9p})$ overlap, which is only possible for
$m_1<\frac{2p}{3}$. But then
\[
2p-5\leq m_1+m_2+m_3\leq\frac{2p}{3}+\frac{6p}{5} = \frac{28}{15}p,
\]
which fails for $p\geq 41$. If $y>\frac{2}{9}$, then 2 is obstructing,
unless $m_2<\frac{5p}{9}$, but then
\[
2p-5\leq m_1+m_2+m_3\leq\frac{3p}{4}+\frac{10p}{9} = \frac{67}{36}p,
\]
which is also impossible.

If $\frac{p}{4}<y<\frac{3p}{10}$, then 2, 3, 6, and 10 are
obstructing, which implies 
$x\in(\frac{7}{10}, \frac{3}{4})$ and $m_1<\frac{3p}{5}$. If
$y\leq\frac{2p}{7}$, then 7 is obstructing, and we obtain
$m_1<\frac{4p}{7}$, which gives
\[
2p-5\leq m_1+m_2+m_3\leq\lfloor\frac{3p}{4}\rfloor+2\lfloor\frac{4p}{7}
\rfloor \leq \frac{53p}{28}.
\]
For $p>43$ this estimate gives a contradiction, while for $p=41, 43$
we compute explicitly the rounding errors and obtain a contradiction
as well. If $\frac{y}{p}\in(\frac{2}{7},\frac{3}{10}]$, then 5 is
obstructing, which yields a contradiction, unless
$m_2<\frac{4p}{7}$. But then $m_1+m_2+m_3\leq\frac{61}{35}$, which
is impossible.
\end{proof}

We can now finish the proof of Theorem~\ref{thm:MaxMult} (ii).

Consider the set $B=\{iy: 1\leq i\leq m_3, iy\bmod p\geq
p-m_2\}$. Then $|B|\leq p-m_1-1$, since for $i\neq j\in B$ we have
$ix\neq iy$. Hence, $C=\{iy: 1\leq i\leq m_3, iy\bmod p< p-m_2\}$
satisfies $|C|\geq m_1+m_3-p+1\geq p-4-m_2$, that is, there are at
most 3 values in the range $[1, p-m_2-1]$, which are not in
$C$.

Suppose first that $y<p-m_2$. For every element $c$ in $C$ with at
most one exception we have that $B$ contains $c+\nu y$ for all $\nu$
such that $c+\nu y\in[p-m_2, p]$, together with Lemma~\ref{Lem:yBound}
we deduce $|B|\geq m_2-11$. Hence,
$m_1+m_2\leq p+10$, thus $m_1+m_2+m_3\leq\frac{3p}{2}+15$, which is
impossible for $p\geq 41$. 

If $y\geq p-m_2$, then 1 is obstructing, which implies $x\in[1,
p/2]$. By our assumption we have $y\leq x$, hence $2y<p$, and we
obtain a zero-sum, unless $2x<p-m_1$. But then $y\leq x<p/4$, which
contradicts Lemma~\ref{Lem:yBound}. Hence, Theorem~\ref{thm:MaxMult}
(ii) is proven.

\section{Asymptotic estimates}
\label{sect:MinMult}
\subsection{Lower bounds for the largest multiplicities}
We first establish the following, which is a strengthening of the
bound for $m_1$ implied by Theorem~\ref{thm:MinMult}.
\begin{Theo}
\label{thm:pMult}
For every $\epsilon>0$ there exists some $\delta>0$, such that for
every sufficiently large prime number $p$ and every multiset
$A\subseteq\Z_p^2$ such that no element of $A$ has multiplicity
$\geq\delta p$, the following holds true.
\begin{enumerate}
\item If $|A|>(1+\epsilon)p$, then $A$ contains a zero-sum of length $\leq p$.
\item If $|A|>(2+\epsilon)p$, then $A$ contains a zero-sum of length $p$.
\end{enumerate}
\end{Theo}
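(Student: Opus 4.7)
The plan is to use the classical character-sum (Fourier) method. For each character $\chi$ of $\Z_p^2$, form the generating polynomial
\[
F_\chi(z) = \prod_{v \in \Z_p^2} (1+\chi(v) z)^{m_v},
\]
where $m_v$ is the multiplicity of $v$ in $A$. By orthogonality of characters, the number of zero-sum subsets of $A$ of length exactly $k$ equals
\[
N_k = \frac{1}{p^2} \sum_{\chi} [z^k] F_\chi(z).
\]
The trivial character contributes $\binom{|A|}{k}$ for each $k$. Summing over $1\le k\le p$ for part~(1), or specialising to $k=p$ for part~(2), Stirling shows that the main term is of exponential size $(2-\eta_0(\epsilon))^{|A|}$ for some $\eta_0(\epsilon)>0$. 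The task is to show that the contribution of the nontrivial characters is strictly smaller.

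For a nontrivial $\chi$, write $\chi(v) = e^{2\pi i \phi(v)/p}$ for a nonzero linear functional $\phi\colon\Z_p^2\to\Z_p$. I would use
\[
|[z^k] F_\chi(z)| \le \max_{|z|=1} |F_\chi(z)|
\]
together with the identity $\log |F_\chi(e^{i\theta})| = |A|\log 2 + \sum_{a\in A}\log\bigl|\cos\tfrac{2\pi\phi(a)/p+\theta}{2}\bigr|$. Since $\tfrac{1}{2\pi}\int_0^{2\pi}\log|2\cos(t/2)|\,dt = 0$, if the projected multiset $\phi(A)\subset\Z_p$ is sufficiently equidistributed---quantitatively, if its largest multiplicity is at most $(1-\eta_1)|A|$---then the right-hand side is at most $(\log 2 - c(\eta_1))|A|$ with $c(\eta_1)>0$. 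Summing over the $p^2-1$ nontrivial characters yields an error of order at most $p^2\cdot(2-c')^{|A|}$, which is dominated by the main term provided $\delta$ is chosen small enough in terms of $\epsilon$.

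The central obstacle is that the hypothesis ``no element of $A$ has multiplicity $\ge\delta p$'' does not by itself control the multiplicity of $\phi(A)$: the entire mass of $A$ could lie on a single coset of $\ker\phi$. I would therefore split into two cases. In the \emph{generic} case, where no coset of any index-$p$ subgroup of $\Z_p^2$ contains more than $(1-\eta_1)|A|$ elements of $A$, the Fourier bound above applies uniformly to every nontrivial character and finishes the argument. In the \emph{concentrated} case, some coset $v_0+H$ of an index-$p$ subgroup $H\cong\Z_p$ carries the bulk of $A$; note that $A$ can be so concentrated on at most one such coset. The low-multiplicity hypothesis transports verbatim to the multiset $A\cap(v_0+H)$ viewed inside $H\cong\Z_p$. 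If $v_0\in H$, then $A$ essentially lies in a subgroup and $D(\Z_p)=p$ together with Lemma~\ref{Lem:Onedim} immediately yield a short zero-sum. If $v_0\notin H$, any zero-sum supported on the coset must have length divisible by $p$, hence exactly $p$, and the problem reduces to finding a $p$-subset of the projected multiset in $H$ summing to~$0$.

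I expect the hardest step to be this final reduction in the setting of part~(1), where the available $|A\cap(v_0+H)|$ is only $>(1+\epsilon-o(1))p$, safely below the Erd\H{o}s--Ginzburg--Ziv threshold $2p-1$ (for part~(2), $|A\cap(v_0+H)|>2p$ makes EGZ immediate). Here the low-multiplicity hypothesis forces many distinct projected values; Lemma~\ref{Lem:Onedim}~(2) then makes $|\Sigma_p|$ large, and in the remaining borderline cases either Lemma~\ref{Lem:GGmult} or a one-dimensional version of the Fourier argument should pin down $0\in\Sigma_p$. The quantitative trade-off between the concentration threshold $\eta_1$ and the equidistribution saving $c'$ fixes the eventual dependence $\delta=\delta(\epsilon)$.
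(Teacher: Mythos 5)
The decisive step of your ``generic'' case is not valid. From the mean-zero property of $\log|2\cos(t/2)|$ you infer a bound $\max_{|z|=1}|F_\chi(z)|\le 2^{|A|}e^{-c(\eta_1)|A|}$ as soon as no fiber of $\phi$ carries more than $(1-\eta_1)|A|$ elements of $A$; this is a non sequitur, because the exponential saving depends on how the values $\phi(a)/p$ are distributed on the circle, not on the largest single fiber multiplicity. Concretely, take $A$ with $\phi(A)\subseteq\{0,1\}$, each of the two fibers carrying about $|A|/2$ points (compatible with ``every element has multiplicity $<\delta p$'', since each fiber is a full coset containing $p$ points). For $\theta$ near $0$ every factor contributes $\log\bigl|2\cos(\pi\phi(a)/p)\bigr|\ge\log 2-O(p^{-2})$, so $\max_{|z|=1}|F_\chi(z)|\ge 2^{|A|}e^{-O(|A|/p^{2})}$, and the coefficient bound for this single character already fails to be dominated by the main term: for part (2) the main term $\binom{|A|}{p}$ with $|A|\approx(2+\epsilon)p$ is smaller than $2^{|A|}$ by a factor exponential in $p$, and for part (1) it is at most $2^{|A|}$. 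Worse, many characters can be bad simultaneously --- if $A$ lies in a box $[1,K]^2$ with $K\approx\delta^{-1/2}$, every $\phi$ with small coefficients maps $A$ into a short interval and yields no saving --- whereas your ``concentrated'' alternative only covers the situation where one single coset of one index-$p$ subgroup carries $(1-\eta_1)|A|$ elements. So the dichotomy does not exhaust the cases in which the Fourier estimate fails; repairing it would require a structure-versus-randomness analysis for every character, which is essentially the content of the Alon--Dubiner-type expansion result (Lemma~\ref{Lem:AD}) that the paper uses in place of character sums.

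Independently, the final step of your concentrated case for part (1) is asserted rather than proved: after translating the heavy coset into $H\cong\Z_p$ you need a zero-sum of length exactly $p$ in a multiset of size only about $(1+\epsilon)p$ whose multiplicities are all $<\delta p$. Lemma~\ref{Lem:Onedim}(2) yields only about $\epsilon p$ distinct $p$-term sums, far from all of $\Z_p$, and Lemma~\ref{Lem:GGmult} produces a zero-sum of uncontrolled length, which does not lift back to $\Z_p^2$ because the number of coset elements used must be divisible by $p$. This missing statement is essentially a one-dimensional analogue of part (2) with $1+\epsilon$ in place of $2+\epsilon$; it may well be true, but it needs its own proof and does not follow from the quoted lemmas. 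The paper avoids it altogether: in the heavy-line case it uses Lemma~\ref{Lem:ksubsets} and Cauchy--Davenport to build, from a small part $B$ of the line, a fixed-length sumset covering an entire line, and then only needs arbitrary subset sums of $\pi_1(A\setminus B)$, with Lemma~\ref{Lem:GGmult} as the fallback when that projection is too concentrated.
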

We will need the following lemma.
\begin{Lem}
\label{Lem:AD}
There exists an absolute constant $W$, such that the following holds
true: If $p$ is a sufficiently large prime, and
$A\subseteq\Z_p^2$ is a set with $|A|\geq p/4$, and if for each affine
line $L$ we have $|A\cap L|\leq \frac{|A|}{W}$, then there exists some $n$
such that $|\Sigma^n(A)|\geq p^2/2$.
\end{Lem}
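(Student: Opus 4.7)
The plan is to proceed by Fourier analysis on the group $\Z_p^2$. Every non-trivial character $\chi$ of $\Z_p^2$ has kernel equal to some line $L_\chi$ through the origin, so
\[
\hat{1_A}(\chi) \;=\; \sum_{L} |A \cap L|\,\chi(L),
\]
where $L$ runs over the $p$ affine cosets of $L_\chi$ and $\chi(L)$ is the common value of $\chi$ on $L$. The non-concentration hypothesis $|A \cap L| \le |A|/W$ forces this exponential sum of $p$-th roots of unity to spread its weight over at least $W$ distinct phases. The first task would be to extract from this a uniform spectral gap
\[
\max_{\chi\ne 0}|\hat{1_A}(\chi)| \;\le\; (1-\eta)\,|A|
\]
for some $\eta = \eta(W) > 0$ tending to $1$ as $W \to \infty$.

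Once this bound is in place, iterated convolution closes the argument. Writing $f = 1_A$, the support of the $n$-fold convolution $f^{*n}$ is exactly $\Sigma^n(A) = A + \cdots + A$, and $\widehat{f^{*n}} = \hat{f}^{\,n}$. Cauchy-Schwarz gives
\[
|A|^{2n} \;=\; \Bigl(\sum_x f^{*n}(x)\Bigr)^{2} \;\le\; |\Sigma^n(A)|\cdot\|f^{*n}\|_2^{2},
\]
while Plancherel yields
\[
p^{2}\|f^{*n}\|_2^{2} \;=\; |A|^{2n} + \sum_{\chi\ne 0}|\hat{1_A}(\chi)|^{2n}
\;\le\; |A|^{2n}\bigl(1 + (p^{2}-1)(1-\eta)^{2n}\bigr).
\]
Choosing $n$ of order $(\log p)/\eta$ makes the right-hand side at most $2|A|^{2n}/p^{2}$, whence $|\Sigma^n(A)| \ge p^{2}/2$, as required.

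The genuinely delicate step will be the uniform spectral gap. A bare Dirichlet-kernel estimate is insufficient: $W$ consecutive $p$-th roots of unity with equal weights $|A|/W$ can be summed to a value as large as $|A|\bigl(1 - O(W^{2}/p^{2})\bigr)$, providing no usable gap when $W$ is a constant and $p$ is large. I expect the remedy to be a case split on $\chi$. If a proportion $\eta$ of the mass $|A|$ lies on cosets $L$ whose phases $\chi(L)$ are angularly separated from the dominant direction by $\gtrsim 1/W$, then direct cancellation in the sum yields the required gap. Otherwise almost all of $A$ concentrates in a "tube" of at most $p/W$ consecutive cosets of $L_\chi$, and pigeonholing among the $p+1$ possible line-directions $L_\chi$ forces $A$ to sit essentially on a single affine line, contradicting the hypothesis once $W$ is chosen sufficiently large. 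A more robust alternative that sidesteps these Fourier technicalities is to combine the Plünnecke-Ruzsa inequality with a Freiman-Ruzsa structure theorem in $\Z_p^{2}$: if $|A+A| \le K|A|$ then $A$ lies in a generalized arithmetic progression of bounded dimension, which under the line hypothesis forces $K$ to grow with $W$; iterating then produces the desired $n$.
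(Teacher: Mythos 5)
There is a genuine gap, and it is exactly at the step you yourself flag as delicate: the uniform spectral gap is false under the hypotheses of the lemma. Take $W$ fixed and let $A=\{(x,y)\in\Z_p^2 : 0\le \ii(x)< m,\ 0\le \ii(y)<W\}$ with $m=\lceil p/(4W)\rceil$, so $|A|=mW\ge p/4$. Every horizontal line meets $A$ in exactly $m=|A|/W$ points, and every non-horizontal line meets it in at most $W\le |A|/W$ points, so the line condition holds; yet for the character $\chi(x,y)=e^{2\pi i y/p}$ the phases occurring in $A$ are only the $W$ consecutive roots of unity $e^{2\pi i y/p}$, $0\le y<W$, whence $|\hat 1_A(\chi)|\ge \bigl(1-O(W^2/p^2)\bigr)|A|$. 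So no gap $\eta=\eta(W)>0$ independent of $p$ exists, and with the true gap $\eta\asymp W^2/p^2$ your choice $n\asymp(\log p)/\eta$ is of order $p^2\log p$. Your proposed repair fails on the same example: a set occupying $W$ consecutive cosets of $L_\chi$ is not forced onto a single affine line — it sits on $W$ parallel lines each carrying exactly $|A|/W$ points, which is precisely what the hypothesis permits. The Freiman--Ruzsa fallback has the same problem: this box is a two-dimensional progression with doubling constant about $4$ independently of $W$, so bounded doubling does not contradict the line condition, and no contradiction "forcing $K$ to grow with $W$" is available.

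A second, independent issue is the meaning of $\Sigma^n(A)$. The support of $1_A^{*n}$ is the $n$-fold sumset $A+\cdots+A$, in which an element of $A$ may be reused, whereas the lemma is applied (in the proof of Theorem~\ref{thm:pMult}) to produce a zero-sum \emph{subsequence} of $A$ of length $n_1+n_2\le p$; there $\Sigma^n$ must mean sums of $n$ distinct elements of $A$, so in particular $n\le|A|\approx p/4$. Thus even if a genuine spectral gap were available, the convolution-support argument would prove a statement that is too weak for the intended use, and the $n\gg p^2$ forced by the tiny gap is incompatible with $n\le|A|$ in any case. The paper's proof is entirely different and avoids these problems: it runs the combinatorial induction step of Alon and Dubiner \cite{AD}, using the non-concentration hypothesis to grow the set of sums of distinct elements in batches of $3s$ elements, with the parameter $s\le p/24$ chosen so that the remaining set still meets every line in at most twice the proportion $1/W$; that argument works with distinct elements throughout and terminates with $n\le|A|$.
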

\begin{proof}
The proof follows closely the lines of the induction step in
Section~2.3 of \cite{AD}. In fact, the only
changes necessary affect the choice of $s$ in \cite[equation (7)]{AD},
which we have to choose $\leq p/24$ to ensure that after using $3s$
elements the remaining set $A'$ still has the property that for each
affine line $A'$ we have $|A'\cap L|\leq\frac{2|A'|}{W}$.
\end{proof}

\begin{proof}[Proof of Theorem~\ref{thm:pMult}.]
Define $W$ as in Lemma~\ref{Lem:AD}. We distinguish two cases,
depending on whether there exists an affine line containing at least
$\frac{p}{W}$ elements of $A$ or not. Suppose first, that no such line
exists. Choose subsets $A_1, A_2\subseteq A$ with
$|A_i|=\lceil\frac{p}{4}\rceil$. Then both $A_1, A_2$ satisfy the conditions of
Lemma~\ref{Lem:AD}, hence, there exist some $n_1,n_2\leq p/2$ such that
$|\Sigma^{n_i}(A_i)|\geq p^2/2$. For statement (1) this is already sufficient,
since $\Sigma^{n_1}(A_1)\cap(-\Sigma^{n_2}(A_2))\neq\emptyset$,
and we obtain a
zero-sum of length $n_1+n_2\leq p$. Note that $n_1, n_2$ cannot be
zero, that is, this zero-sum is in fact non-trivial. For statement (2)
we choose $p-n_1-n_2$ arbitrary 
elements in $A\setminus(A_1\cup A_2)$, add them up to obtain an
element $s$, and use the fact that
$\Sigma^{n_1}(A_1)\cap(-s-\Sigma^{n_2}(A_2))\neq\emptyset$
to find a zero-sum using
$n_1$ elements in $A_1$, $n_2$ in $A_2$, and $p-n_1-n_2$ in
$A\setminus(A_1\cup A_2)$. Hence, in this case our claim follows.

Now suppose that there exists a line $L$ with $|A\cap L|\geq \frac{p}{W}$.
For statement (1), if this line passes through $0$, we obtain a
zero-sum using Lemma~\ref{Lem:GGmult}, provided that
$\delta<\frac{1}{40W^2}$. For statement (2) we can add
a vector to all elements in $A$ without changing the statement, hence,
in both cases we may assume that $L=\{(1, t):t\in\Z_p\}$. If
$\delta<\frac{\epsilon}{100W^2}$, then from
$A\cap L$ we can choose $\lfloor\frac{\epsilon^2 p}{400W}\rfloor$ sets $B_i$
containing $100\epsilon^{-1}W$ 
different elements each, and set $B=\bigcup B_i$;
note that $|B| < p\epsilon/4$. From Lemma~\ref{Lem:ksubsets} it
follows that $|\Sigma_k(B_i)|\geq 2500\epsilon^{-2}W^2$, where
$k=\lfloor|B_i|/2\rfloor$. Hence, putting $N=k\lfloor\frac{\epsilon^2
  p}{400W}\rfloor$ it follows from the Cauchy-Davenport theorem that
$\Sigma_N(B)$ contains the whole line $\{(N, t): t\in\Z_p\}$.
Hence, our claim follows if we can show for statement (1) that every element
of $\Z_p$ can be written as a subset sum of $\pi_1(A\setminus B)$, and for
statement (2) that every element in $\Z_p$ can be written as a subset sum of
$\pi_1(A\setminus B)$ of length $p-N$. Suppose that this is not the case.
For statement (1) this implies that
$\pi_1(A\setminus B)$ contains less than $p$ non-zero elements.
However, in this case 
$\pi_1(A\setminus B)$ contains 0 with multiplicity at least
$\frac{3\epsilon}{4} p$, so we may
apply Lemma~\ref{Lem:GGmult} once more to
obtain a zero-sum. For statement (2) note that $N\sim\epsilon
p/4$. Hence, we obtain a zero-sum, unless there is some element $a\in
\Z_p$, such that $A$ contains at 
least $(1+\epsilon/2)p$ elements mapping to $a$ under $\pi_1$. But then we
find a zero-sum of length $p$ within this set in the same way as 
for statement (1).
\end{proof}
We now turn to the proof of Theorem~\ref{thm:MinMult}. Assume that
$(1, 0)$ is the point with the highest multiplicity $m_1$ in $A$. If
$m_1<(1-\epsilon)p-2$, set $A'=A\setminus\{(1, 0)^{m_1}\}$. Then by
Theorem~\ref{thm:pMult} we see that $A'$ contains a zero-sum, unless
the largest multiplicity of $A'$ is at least $\delta p$ for some
$\delta$ depending on $\epsilon$. Hence, it suffices to consider the
case $m_1>0.9 p$.

Choosing $W$ as in Lemma~\ref{Lem:AD}, we find that $A'$ contains a
zero-sum, unless there is a line $L$ with $|A'\cap
L|>\frac{p}{W}$. Again as in the proof of Theorem~\ref{thm:MinMult} we
see that for $\delta$ sufficiently small we can find a set $B\subseteq
A\cap L$ with $|B|<0.1 p$ such that $\Sigma(B)$ contains some line
$L'=\{(a, b)+(x, y)t:t\in\Z_p\}$. Suppose first that $(x, y)$
is not collinear to $(1,0)$. Then $\langle(x, y)\rangle$ contains at
most $\delta$ elements of $A$, hence  in $A\setminus B$ we find
$p-1$ elements not collinear to $(x, y)$. Thus we can find an element
$s\in\Sigma(A\setminus B)$ with $-s\in L'$; together with some elements
in $B$, this yields a zero-sum.

Now we suppose that $L'$ is
parallel to $\langle(1, 0)\rangle$. We obtain a zero-sum
if $\Sigma(\pi_2(A\setminus B))=\Z_p$. Since $A\setminus B$
contains at least $2p-2-0.1p-m_1\geq 0.9 p$ elements, this is
certainly the case unless there is some $a\in\Z_p$, such that
$|\pi_2^{-1}(a)|>0.8p$. Thus we may assume without loss that $A$ contains
at least $0.8p$ elements $a$ with $\pi_2(a)=1$ (and $(1, 0)$ with
multiplicity $>0.9p$).

For $\delta$ sufficiently small we can easily find
$p/20$ pairs $a_1, a_2$ in $A$, such that $\pi_2(a_1)=\pi_2(a_2)=1$,
and $|\pi_1(a_1)-\pi_1(a_2)|>10$. If there is a pair with
$|\pi_1(a_1)-\pi_1(a_2)|>0.1 p$, we are immediately done
by Lemma~\ref{Lem:2n-9Diff}. Otherwise we take
$N=\lfloor\frac{p-m_1-1}{2}\rfloor \le p/20$ such pairs. Since there are
$2p-2-m_1-2\lfloor\frac{p-m_1-1}{2}\rfloor\geq p-1$ elements in $A$
which are neither in one of the pairs chosen nor equal to $(1, 0)$,
there is an element $s$ with $\pi_2(s)=-N$, which can be represented
using elements not in one of the chosen pairs nor equal to
$(1,0)$. Choosing one element of each pair and adding them to $s$
yields an element of $\langle(1, 0)\rangle$; by using different choices,
we obtain a sequence of $N+1$ elements
$(x_0, 0), \ldots (x_N,0) \in \Sigma(A\setminus\{(1, 0)^{m_1}\})$
with $10<x_{i+1}-x_i<0.1 p$. This yields a zero-sum unless
$0 < x_0 < x_N < p - m_1$, i.e. we get
$10N < p - m_1$. But $10N \ge  5(p-m_1)-10$,
which contradicts $p - m_1 \ge 3$.

\medskip

If the reader has the impression that our dealing with constants in
the proof of Theorem~\ref{thm:MinMult} is quite wasteful, she is
certainly right. However, the real loss occurs in the use of
Lemma~\ref{Lem:AD}, and we did not try to improve a constant which
will still be too small to be of much use.

\subsection{Upper bounds for the largest multiplicity}
In this section we prove Theorem~\ref{thm:MaxMult}(\refmm{asymp}). Let $p$ be a
prime number, $A\subseteq\Z_p^2$ be a zero-sum free set with
$|A|=2p-2$, and maximal multiplicities $m_1 \ge m_2$. We
may assume that 
the elements with maximal multiplicity are $(1, 0)$ and $(0, 1)$,
and that $A$ contains no other element of the form $(x,0)$ or $(0,y)$.
Set $\delta=p-m_1$; in several places, we will suppose that
$\delta/p$ is sufficiently small (but independently of $p$).
We will moreover use the following definitions:
$\mu$ is the maximal multiplicity of $\pi_2(A\setminus\{(1, 0)^{m_1}\})$, 
and $k=\lceil\frac{p}{m_2}\rceil$ is the ``number of times one would
need the elements $(0,1)^{m_2}$ to fill an entire $\Z_p$''.

We do already have a lower and an upper bound for $m_2$:
by \refmm{2max}, we may suppose $m_2 < 2p/3$. On the other hand,
for $\delta/p$ sufficiently small, Theorem~\ref{thm:MinMult} yields:

\begin{Lem}\label{Lem:m2lowerBound}
We have $m_2 > 8\delta$, and in particular $k \le \frac{p}{4\delta}$.
\end{Lem}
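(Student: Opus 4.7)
The plan is essentially an immediate application of Theorem~\ref{thm:MinMult}. Since there is already an overload of notation (the constant called $\delta$ in Theorem~\ref{thm:MinMult} is not the same as $\delta = p - m_1$ of this section), I would first introduce a separate symbol, say $\delta_0$, for the absolute constant provided by Theorem~\ref{thm:MinMult}. Because $A$ satisfies $|A| = 2p - 2$ and is zero-sum free by hypothesis, that theorem forces
\[
m_2 \ge \delta_0 p.
\]
The phrase ``for $\delta/p$ sufficiently small'' (as used at the start of the subsection) means we are free to impose any absolute upper bound on $\delta/p$. Imposing $\delta/p < \delta_0/8$ turns the previous inequality into $m_2 > 8\delta$, which is the first assertion.

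For the second assertion, note that $m_2 > 8\delta$ gives $p/m_2 < p/(8\delta)$, hence
\[
k \;=\; \Bigl\lceil \tfrac{p}{m_2}\Bigr\rceil \;<\; \tfrac{p}{8\delta} + 1.
\]
To upgrade this to $k \le \tfrac{p}{4\delta}$ it suffices to know $\tfrac{p}{8\delta} + 1 \le \tfrac{p}{4\delta}$, i.e.\ $p \ge 8\delta$; this is again guaranteed by taking $\delta/p$ small enough (in fact the same threshold $\delta/p < \delta_0/8$ is far more than sufficient, since $\delta_0 < 1$).

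I do not anticipate any real obstacle: the lemma is simply the combination of the lower bound on $m_2$ furnished by Theorem~\ref{thm:MinMult} with the definition of $k$, together with a quantitative choice of how small ``$\delta/p$ sufficiently small'' has to be. In the proof one should explicitly record this threshold (e.g.\ $\delta/p < \delta_0/8$), because later sections will need to pile further such smallness assumptions on top.
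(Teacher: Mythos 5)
Your proposal is correct and matches the paper's own treatment: the lemma is stated there as an immediate consequence of Theorem~\ref{thm:MinMult} under the standing assumption that $\delta/p$ is sufficiently small, exactly as you argue (with the bound on $k=\lceil p/m_2\rceil$ following from $m_2>8\delta$ and $p\ge 8\delta$). Your explicit bookkeeping of the threshold $\delta/p<\delta_0/8$ and the renaming of the constant from Theorem~\ref{thm:MinMult} is a harmless clarification of what the paper leaves implicit.
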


We will now first get precise statements describing the
rows $A \cap \pi_2^{-1}(y)$; the important result here is
Lemma~\ref{Lem:muSmall}, which bounds the number of elements
of each row.
Then we use the method of Lemma~\ref{Lem:CompAppl} to finish the proof.

We proceed by induction in the following way. Let $A'$ be another set
with cardinality $2p-2$ and maximal multiplicities $p - 3 \ge m'_1 \ge m'_2$.
We suppose that the
claim is true if $m'_1 \ge m_1$, $m'_2 \ge m_2$ and
$(m_1,m_2) \ne (m'_1, m'_2)$. Moreover,
for $B \subset \Z_p^2$ consider the sum
\[
S(B):=\sum_{(x,y)\in B}\ii(x)^2 + \ii(y)^2.
\]
We also suppose that the claim is true for $A'$ if $m'_1 = m_1$ and $m'_2 = m_2$
and $S(A') > S(A)$.

Using this induction, we show:
\begin{Lem}\label{Lem:horizPairs}
Suppose $(x,y), (x',y) \in A$ with $y \ge 2$. Then $x - x' \in \{-1,0,1\}$.
\end{Lem}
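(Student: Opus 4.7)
The plan is to argue by contradiction: assume $(x,y), (x',y) \in A$ with $y \ge 2$ and, without loss of generality, $c := x - x' \ge 2$. First, I would apply Lemma~\ref{Lem:2n-9Diff} with $a_1 = (x,y)$ and $b_1 = (x',y)$; the hypothesis $-y \in \Sigma(\pi_2(A \setminus \{(x,y),(x',y)\}))$ is easy to verify, since the remaining set still contains $m_2 > 8\delta$ copies of $(0,1)$ plus enough other elements that Cauchy--Davenport on their second coordinates covers all of $\Z_p$. This gives $c \le \delta - 2$, so we may assume $2 \le c \le \delta - 2$.

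I would then define the swapped set $A^\star := A \setminus \{(x',y)\} \cup \{(x'+1, y)\}$. By possibly interchanging the roles of $(x,y)$ and $(x',y)$ and swapping the other element up instead, I can arrange that no wrap-around in $\ii$ occurs, i.e.\ that the modified element satisfies $\ii(\cdot + 1) = \ii(\cdot) + 1$; then a direct calculation shows $S(A^\star) - S(A) \ge 2\ii(\min(x, x')) + 1 > 0$. A short check of the affected multiplicities shows that $(1, 0)$ and $(0, 1)$ retain their multiplicities $m_1$ and $m_2$ in $A^\star$, and the new copy of $(x'+1,y)$ cannot raise any multiplicity above $m_1$; thus $m^\star_1 \ge m_1$ and $m^\star_2 \ge m_2$ componentwise, with either $(m^\star_1, m^\star_2) \ne (m_1, m_2)$ or $S(A^\star) > S(A)$. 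By the inductive hypothesis, $A^\star$ then contains a zero-sum, and it suffices to derive a contradiction from this.

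Suppose $Z^\star \subseteq A^\star$ is a zero-sum. Since $A$ is zero-sum free, $Z^\star$ must use the newly added copy of $(x'+1, y)$; swapping this copy back for $(x', y)$ yields $Z \subseteq A$ with $\sum Z = -(1, 0)$. If $Z$ uses fewer than $m_1$ copies of $(1,0)$, then $Z \cup \{(1,0)\} \subseteq A$ is a zero-sum in $A$, contradicting its zero-sum freeness. The remaining case is when $Z$ uses all $m_1$ copies of $(1,0)$; setting $W := Z \setminus \{(1,0)^{m_1}\}$, we have $W \subseteq A \setminus \{(1,0)^{m_1}\}$, $(x', y) \in W$, and $\sum W = (\delta - 1, 0)$. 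Provided $W$ does not already exhaust all copies of $(x,y)$ in $A$, I would form $W' := W \setminus \{(x', y)\} \cup \{(x, y)\}$, which lies in $A$ and sums to $(\delta - 1 + c, 0)$; since $c \ge 1$ we have $m_1 + 1 - c \le m_1$, so $W' \cup \{(1,0)^{m_1 + 1 - c}\} \subseteq A$ is a zero-sum, the desired contradiction. The main obstacle I anticipate is the residual subcase in which $W$ contains every copy of $(x,y)$ in $A$; I expect to handle this by exploiting the analogous construction with the opposite swap direction (so that $x$ and $x'$ play interchanged roles) together with a further application of Lemma~\ref{Lem:replace}(1) to reach a contradiction symmetrically.
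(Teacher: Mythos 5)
Your opening step (Lemma~\ref{Lem:2n-9Diff} giving $2\le c\le\delta-2$) and the transfer of a zero-sum of $A^\star$ back to $A$ are fine as far as they go, but the proposal has a genuine gap exactly where you flag it, and the fix you sketch does not work. Your construction is directionally asymmetric: after swapping back, the element you are guaranteed to find in $W$ is the one you incremented, and the final replacement must \emph{increase} $\pi_1(\sum W)=\delta-1$ by $c$ so as to leave the window $[1,\delta-1]$. If instead you increment $(x,y)$ (the element that is $c$ steps ahead), the only available replacement is $(x,y)\mapsto(x',y)$, which changes the first coordinate by $-c$ and lands at $\delta-1-c\in[1,\delta-3]$; completing this to zero would need $p-(\delta-1-c)>m_1$ copies of $(1,0)$, which you do not have. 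So the ``symmetric'' handling of the residual subcase (all copies of $(x,y)$ already in $W$) fails, and that subcase is left open; the vague appeal to Lemma~\ref{Lem:replace}(1) is not an argument. The same asymmetry undermines your wrap-around reduction: you are forced to increment precisely $(x',y)$, and if $\ii(x')=p-1$ you cannot switch sides, while incrementing $(x',y)$ then produces $(0,y)$ and \emph{decreases} $S$, so the induction hypothesis no longer applies (nor can you invoke Lemma~\ref{Lem:Gaop} on $A^\star$, since you do not know $A^\star$ is zero-sum free).

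The paper's proof avoids both problems by moving the two elements \emph{simultaneously and in opposite directions}: replacing $(x,y),(x',y)$ by $(x+k,y),(x'-k,y)$ preserves the sum of the pair, and together with the $p-\delta$ copies of $(1,0)$ one checks $\Sigma(A')\subseteq\Sigma(A)$. Hence $A'$ inherits zero-sum freeness directly, and one concludes either by the induction on $S$ (take $k=1$ toward the larger coordinate) or, choosing $k=\ii(x')$, by producing the element $(0,y)$ inside a zero-sum free set of size $2p-2$, contradicting Lemma~\ref{Lem:Gaop}. Your single-element swap loses exactly this $\Sigma$-containment, and that loss is what creates the unresolved subcase and the wrap-around corner case above; to repair the proposal you would essentially need to reintroduce the paired move.
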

\begin{proof}
Suppose otherwise. After possibly exchanging $x$ and $x'$, we may
suppose $\ii(x' - x) \le p - \delta + 1$.
Then $\Sigma(\{(1,0)^{p-\delta}, (x,y), (x',y)\})$ contains
the whole interval $(x,y) + \{0, 1, \dots, \ii(x' - x) + p - \delta\}\cdot(1,0)$.
In particular, if we replace $(x,y)$ and $(x',y)$ by $(x+k,y)$ and $(x'-k,y)$
for some $0 \le k \le \ii(x' - x)$,
then we get a new set $A'$ satisfying $\Sigma A' \subset \Sigma A$. Thus
it suffices to prove that $A'$ contains a zero-sum.
If $\ii(x') > \ii(x)$, then choose $k = 1$.
As $\ii(x+1)^2 + \ii(x'-1)^2 > \ii(x)^2 + \ii(x')^2$, the set $A'$ contains a
zero-sum by induction.
If $\ii(x) < \ii(x')$, then choose $k = \ii(x')$. Then $A'$ contains $(0,y)$,
which is impossible.
\end{proof}

\begin{Lem}\label{Lem:muSmall}
We have $\mu\leq m_2+\delta-2$.
\end{Lem}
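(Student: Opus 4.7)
I would argue by contradiction combined with the stated induction. Suppose $\mu\ge m_2+\delta-1$, and let $y_0$ attain this multiplicity in $\pi_2(A\setminus\{(1,0)^{m_1}\})$. The key tool is Lemma~\ref{Lem:replace}: if I can produce $E\subseteq A$ and $k\in\N$ with $\sum_{e\in E}e=k\cdot(0,1)$ and $k-1\le m_2$, then either $|E|<k$ (in which case Lemma~\ref{Lem:replace}(1) would produce a zero-sum free set in $\Z_p^2$ of cardinality exceeding $2p-2$, forcing a zero-sum in $A$), or $|E|=k$, in which case $A'=A\cup\{(0,1)^k\}\setminus E$ has cardinality $2p-2$ with new multiplicity $m_2+k>m_2$ for $(0,1)$, yielding a zero-sum in $A'$ (and hence in $A$) via the induction hypothesis on $(m_1,m_2)$ --- provided the multiplicity of $(1,0)$ in $A'$ does not drop below $m_1$, which is automatic when $E$ uses no copies of $(1,0)$ or when $m_2+k\ge m_1$.

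In the case $y_0\ge 2$, Lemma~\ref{Lem:horizPairs} says the row $y=y_0$ of $A$ consists of $(x_0,y_0)^a\cup(x_0+1,y_0)^b$ with $a+b=\mu$ and $a,b\le m_2$, hence $a,b\ge\delta-1$. For any $\ell\in[\delta-1,m_2]$, the possible values of $b'\in[\max(0,\ell-a),\min(b,\ell)]$ form an interval of length $\min(a,b,\ell,a+b-\ell)+1\ge\delta$; thus the $\pi_1$-sum $\ell x_0+b'$ of the subset $E_0=\{(x_0,y_0)^{\ell-b'},(x_0+1,y_0)^{b'}\}$ runs through $\ge\delta$ consecutive residues modulo $p$, and at least one choice of $b'$ yields $c:=-(\ell x_0+b')\bmod p\in[0,m_1]$. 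Then $E=E_0\cup\{(1,0)^c\}$ sums to $(0,\ell y_0)$. Choosing $\ell$ so that $k:=\ell y_0\bmod p$ lies in $[1,m_2+1]$ --- possible given the ample range $m_2>8\delta$ from Lemma~\ref{Lem:m2lowerBound} --- and comparing $|E|=\ell+c$ against $k$ realises one side of the dichotomy.

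The case $y_0=1$ is more delicate. Here one has $m_2$ copies of $(0,1)$ plus $\mu-m_2\ge\delta-1$ ``extras'' $(x,1)\in A$ with $x\ne 0$. Lemma~\ref{Lem:2n-9Diff} applied to the pair $(x,1),(0,1)$ (the prerequisite $-1\in\Sigma(\pi_2(A\setminus\{(x,1),(0,1),(1,0)^{m_1}\}))$ holds because the remaining multiset contains $m_2-1$ copies of $1$) gives $|x|\le\delta-2$ for every extra. Moreover, an opposite pair $\{(x,1),(-x,1)\}\subseteq A$ is impossible: this $E$ of size $2$ sums to $2(0,1)$, uses no $(1,0)$, and thus immediately triggers the induction on a larger $m_2$. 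Consequently all extras have $x$ on one side of $0$, say $x\in[1,\delta-2]$; with at most $\delta-2$ positions for at least $\delta-1$ elements, pigeonhole forces repetition, and a subset-sum analysis on the extras combined with appropriately many $(1,0)$'s produces the required $E$.

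The main obstacle will be exactly this final step in the case $y_0=1$: the naive $E$ built from a few extras invariably satisfies $|E|>k$ rather than $|E|\le k$, because each extra contributes only $1$ to $\pi_2$ but a positive integer to $\pi_1$, forcing a large compensating count of $(1,0)$'s. Hence one must either collect enough extras so that their $\pi_1$-sum wraps around $p$ (making $c$ very small), or exploit the induction on the auxiliary invariant $S(A)$ after a local modification --- for instance, replacing an extra $(x,1)$ and $p-x$ copies of $(1,0)$ by one $(0,1)$, which rearranges the set without enlarging $\Sigma(A)$. The bound $m_2>8\delta$ is essential to ensure enough arithmetic slack for one of these mechanisms to close.
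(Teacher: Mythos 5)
Your reduction to the dichotomy ``$|E|<k$ gives a zero-sum, $|E|=k$ lets me induct'' is where the argument breaks, and not only in the $y_0=1$ case that you flag yourself: it already fails for $y_0\ge 2$. In your construction $E=E_0\cup\{(1,0)^c\}$ you have $|E|=\ell+c$ and $k=\ii(\ell y_0)\le m_2+1$, but $c$ is the residue $-(\ell x_0+b')\bmod p$, and the set of achievable values of $\ell x_0+b'$ is only an interval of length about $\delta$; for a generic $x_0$ (say $x_0\approx p/2$) every admissible choice gives $c$ of order $p$, so $|E|\gg k$ and \emph{neither} branch of your dichotomy applies --- Lemma~\ref{Lem:replace} with $|E|>k$ only produces a zero-sum free set of cardinality $<2p-2$, which carries no information. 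Moreover, even when $|E|=k$ can be arranged, the induction you invoke is not available as stated: removing $c$ copies of $(1,0)$ lowers the multiplicity of $(1,0)$ to $m_1-c$, and the inductive hypothesis requires the new pair of top multiplicities to dominate $(m_1,m_2)$ componentwise (or to be equal with larger $S$); your proviso ``automatic when $m_2+k\ge m_1$'' does not ensure that the \emph{second} largest multiplicity of $A'$ is still $\ge m_2$, nor that the largest stays $\le p-3$. Finally, the $y_0=1$ case, which in the actual proof is the delicate one, is left open in your write-up (``a subset-sum analysis \dots produces the required $E$'' plus the admission that the naive $E$ has $|E|>k$); the wrap-around idea cannot be forced either, since you are only guaranteed $\ge\delta-1$ extras with $|\pi_1|\le\delta-2$.

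The paper avoids this cardinality bookkeeping entirely. From $\mu\ge m_2+\delta-1$ it extracts $\delta-1$ disjoint pairs $(x_i,y),(x_i',y)$ in the heavy row with $x_i'-x_i=\pm1$ (Lemma~\ref{Lem:horizPairs}), so that, after adding one element $a$ with $\pi_2(a)=-(\delta-1)y$ taken from $\Sigma$ of the rest, the $2^{\,\delta-1}$ choices of one element per pair produce $\delta$ \emph{distinct consecutive} values in $\langle(1,0)\rangle$; since the window that cannot be completed by copies of $(1,0)$ has size only $\delta-1$, one choice yields a zero-sum outright --- a pigeonhole on first coordinates rather than a comparison of $|E|$ with $k$. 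The substantial remaining work (which your sketch does not address at all) is to show that $-(\delta-1)y$ is indeed representable: this occupies most of the paper's proof, via the observation that failure forces one value to have multiplicity $\ge p$ in $\pi_2(A\setminus\{(1,0)^{m_1}\})$, the reduction to $y=1$ using Lemma~\ref{Lem:m2lowerBound}, the appeal to Lemma~\ref{Lem:coset} when the row is constant, and the final analysis showing the row of $\pi_2=1$ has the shape $\{(0,1)^{m_2},(\pm1,1)^{\mu-m_2}\}$, which then sums (with copies of $(1,0)$) to a zero-sum. As it stands, your proposal is missing both the mechanism that replaces the $|E|$-versus-$k$ dichotomy and the representability argument, so the proof does not go through.
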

\begin{proof}
Let $B := \pi_2(A \setminus \{(1,0)^{p-\delta}\})$, and let $y$ be an
element of maximal multiplicity of $B$; we assume that this multiplicity
is at least $m_2 + \delta - 1$.
By Lemma~\ref{Lem:m2lowerBound}, $m_2 \ge \delta$, so we may
set $B' := B \setminus \{ y^{2\delta-2}\}$.
We claim that
if $\Sigma(B')$ contains $-(\delta - 1)y$, then $A$ contains a zero-sum.

Choose an element $a\in\sigma(A)$ with $\pi_2(a)=-(\delta - 1)y$, and
form $\delta-1$ 
pairs $(x_i,y), (x'_i,y) \in A$ with $x_i \ne x'_i$, that is,
$x_i=x_i'\pm 1$. We have
$|\Sigma\{x'_i-x_i\mid 1 \le i \le \delta-1\}|=\delta$, thus by taking
$a$ and one element of each pair, we get
$\delta$ different sums in $\langle (1,0) \rangle$. Together with
$(1,0)^{p-\delta}$, one of them yields a zero-sum.
This proves the claim, hence it remains to show
that $\Sigma(B')$ contains $-(\delta - 1)y$.

As $|B'| = p-\delta$ we have $\Sigma(B') = \Z_p$ unless $B'$ contains an
element $y'$
with multiplicity at least $p - 2\delta + 2$.
As this is more than $|B|/2$
and $y$ was chosen maximal, this implies $y' = y$;
thus $B$ contains $y$ with multiplicity at least $p$.

If $y \ne 1$, then there are only $\delta - 2$ elements left in $A$
which might be equal to $(0,1)$. This contradicts Lemma~\ref{Lem:m2lowerBound}.
so we have $y = 1$, and our task simplifies to proving that $-(\delta
- 1) \in \Sigma(B')$. 
If $B = \{1^{p-2+\delta}\}$, then
$A$ contains a zero-sum by Lemma~\ref{Lem:coset}, so we may suppose
$\sum_{b \in B'} \ii(b) \ge p-\delta + 1$. If $B'$ does not contain
any element in $[p - \delta + 2, p-1]$, then this together with the
high multiplicity of $1$ in $B'$ already implies
$-(\delta - 1) \in \Sigma(B')$, which is what we had to show.

So now let $d\in A$ be an element with
$\pi_2(d)\ge p-\delta+2$. Consider the set $S$ of all elements reachable from
$d$ by adding $p-\ii(\pi_2(d))$ elements $a \in A$ each satisfying $\pi_2(a)=1$.
By Lemma~\ref{Lem:replace}, any $s \in S$ satisfies $1 \le \pi_1(s) \le
p-\ii(\pi_2(d))$,
which is only possible if the set of elements in $A$
with $\pi_2(a)=1$ takes the form $\{(0, 1)^{m_2}, (\pm 1,
1)^{\mu-m_2}\}$. As $\mu\geq p$, we may form the sum
$m_2 \cdot (0,1) + (p-m_2)\cdot(\pm 1,1) = (\mp m_2, 0)$.
Together with copies of $(1,0)$ this yields a zero-sum
as $\delta \le m_2 \le m_1$.
\end{proof}
Recall that we defined $k=\lceil\frac{p}{m_2}\rceil$ and that we already
proved $k \le \frac{p}{4\delta}$.
\begin{Lem}
$A$ contains a zero-sum.
\end{Lem}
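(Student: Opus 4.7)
My plan is to apply Lemma~\ref{Lem:CompAppl} with $B = \{(1,0)^{m_1}, (0,1)^{m_2}\}$: this reduces matters to exhibiting a partition $A\setminus B = U \sqcup V$ satisfying (a) $\Sigma(\pi_2(U)) + [0, m_2] = \Z_p$ and (b) $|\Sigma(\pi_1(V))| > (2k'-1)\delta$, where $k' = \lceil(p+1)/(m_2+2)\rceil \le k$. Since Lemma~\ref{Lem:m2lowerBound} gives $(2k'-1)\delta \le (2k-1)\delta \le p/2$, condition (b) reduces to $|\Sigma(\pi_1(V))| > p/2$.

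First I would catalogue the structure of $A' := A \setminus B$ by the fibers $A'_y := A' \cap \pi_2^{-1}(y)$. Lemma~\ref{Lem:horizPairs} says that for $y \ne 1$ the fiber $A'_y$ is contained in $\{(x_y, y), (x_y+1, y)\}$ for some $x_y$, while Lemma~\ref{Lem:muSmall} gives $|A'_y| \le \mu \le m_2 + \delta - 2$; in particular $A'_1 \setminus \{(0,1)^{m_2}\}$ contains at most $\delta - 2$ elements. The identity $\sum_{y \ne 0} |A'_y| = p + \delta - 2$ then yields $\sum_{y \ne 0,1} |A'_y| \ge p - m_2$, and hence the number $R$ of nonempty fibers with $y \ne 0,1$ satisfies $R \ge (p-m_2)/\mu$, which one checks is of order $k$ or larger in all cases that matter.

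Next I would form $V$ by selecting one copy of each distinct element of $A'_y$ for $y \ne 1$, together with all of $A'_1 \setminus \{(0,1)^{m_2}\}$, and put the rest of $A'$ into $U$. By Lemma~\ref{Lem:Gaop}, distinct elements in $V$ coming from different fibers are linearly independent, so their $\pi_1$-coordinates are pairwise distinct and nonzero. Applying Olson's Lemma~\ref{Lem:Olson} to $\pi_1(V)$ gives $|\Sigma(\pi_1(V))| \ge \min\bigl(\tfrac{p+3}{2}, \tfrac{|V|(|V|+1)}{2}\bigr)$, and since $|V| \ge R$ is of order $k$ while $k \le p/(4\delta)$, a short comparison shows the first branch is attained, giving~(b).

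For (a), removing $V$ from $A'$ deletes at most one copy of each distinct element per fiber $A'_y$ with $y \ne 1$, together with the at most $\delta - 2$ extras in $A'_1$. Consequently $\pi_2(U)$ retains the bulk of its multiplicities, and a direct Cauchy-Davenport bound---or an application of Lemma~\ref{Lem:compactDiscrete}---then shows $\pi_2(U) + [0,m_2] = \Z_p$. The main obstacle will be the degenerate regime where $R$ is close to $1$, i.e.\ when essentially all non-axis mass of $A$ concentrates in a single fiber $A'_{y_0}$: there $V$ is too small for Olson alone to suffice, and one must supplement it with the $\le \delta - 2$ extras in $A'_1$ and invoke Lemma~\ref{Lem:replace} to handle the residual two-coset structure. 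This borderline case is precisely where the constant $c$ in \refmm{asymp} emerges.
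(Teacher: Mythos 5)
There are genuine gaps. The central one is your claim that Lemma~\ref{Lem:Gaop} makes the $\pi_1$-coordinates of elements of $V$ taken from different fibers pairwise distinct: linear independence of $(x,y)$ and $(x',y')$ only means $xy'-x'y\neq 0$, and two elements $(x,y)$, $(x,y')$ with $x\neq 0$ and $y\neq y'$ are linearly independent while sharing the same first coordinate. So $\pi_1(V)$ need not be a set of distinct elements, and Lemma~\ref{Lem:Olson} (which moreover requires that no element and its negative both occur --- an issue you never address, and which is the reason the paper needs Lemma~\ref{Lem:OlsonSize}) cannot be applied to it. A second error is the assertion that ``the first branch is attained'': you reduced (b) to $|\Sigma(\pi_1(V))|>p/2$, but your $V$ has size of order $R+\delta$ with $R$ of order $k$, and $k$ is bounded (e.g.\ $k=2$ when $m_2$ is close to $2p/3$), so Olson's bound only gives order $|V|^2\ll p$ in that regime. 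This heavy machinery is in any case misdirected: in the paper condition (b) is essentially free, since every element of $A\setminus B$ has nonzero first coordinate, so any $V$ with $|V|=(2k-1)(\delta-1)$ already satisfies $|\Sigma(\pi_1(V))|\geq|V|+1>(2k'-1)(\delta-1)$ by Cauchy--Davenport, and $(2k-1)(\delta-1)<p$ by Lemma~\ref{Lem:m2lowerBound}.

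The real content of the paper's proof is condition (a), which you only assert. The paper chooses $V$ (of size exactly $(2k-1)(\delta-1)$) so as to absorb the at most $(\delta-2)k$ ``excess'' elements guaranteed by Lemma~\ref{Lem:muSmall}, which forces the maximal multiplicity of $U':=\pi_2(U\cup\{(0,1)^{m_2}\})$ down to $m_2$; your $V$, which removes only one copy of each distinct element per fiber, does not control these multiplicities. It then proves $\Sigma(U')=\Z_p$ (a condition stronger than, and implying, the hypothesis of Lemma~\ref{Lem:CompAppl}) by a case split: for $k\leq 10$ one has $|U'|>5p/6$ with maximal multiplicity $<2p/3$, and for $k\geq 11$ one partitions $U'$ into blocks of ten distinct elements and combines Lemma~\ref{Lem:OlsonSize} with Cauchy--Davenport, using $k\leq p/(4\delta)$ to get $|U'|>p/2$. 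Your sketch replaces all of this by ``a direct Cauchy--Davenport bound \dots then shows'' the claim, and you yourself concede that the regime where almost all non-axis mass lies in a single fiber is unresolved; that concession, together with the two incorrect steps above, means the argument as it stands does not prove the lemma.
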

\begin{proof}
We will apply Lemma~\ref{Lem:CompAppl}.
We will decompose $A\setminus\{(1, 0)^{m_1}, (0, 1)^{m_2}\}$ into two subset $U$ and $V$
with $|V| = (2k-1)(\delta-1)$; this implies that $V$ satisfies the condition of the lemma.
We claim that by choosing $U$ appropriately, we may ensure that
the maximal multiplicity of $U' := \pi_2(U \cup \{(0,1)^{m_2}\})$
is at most $m_2$. Indeed, using $\mu \le m_2 + \delta - 2$, there are at most
$(\delta-2) \cdot \frac{2p-2 - (p-\delta)}{m_2 + \delta - 2} \le (\delta-2)\cdot\frac{p}{m_2}
\le (\delta-2)k \le (2k-1)(\delta-1)$ elements which we are
forced to include in $V$.

We have $|U'| = p - 2k\delta + 2k + 2\delta - 3$, and we want to show
that $\Sigma(U')=\Z_p$. For any fixed constant $c_0$ (say, $c_0 = 10$), 
$k\leq c_0$ implies $|U'|>5p/6$ if we choose $\delta/p$ small enough.
Using $m_2 < 2p/3$, we see that $\Sigma(U')=\Z_p$.

Now suppose $k \geq 11$, i.e.\ $m_2 < \frac{p}{10}$.
Then we can
partition $U'$ into subsets consisting of 10 different elements each,
leaving at most 9 elements unused. Each of these subsets has a sumset of
cardinality at least $29$ by Lemma~\ref{Lem:OlsonSize},
and the total number of sets is $\lfloor \frac{|U'|}{10} \rfloor$.
Now $k \le \frac{p}{4\delta}$ implies $|U'|>p/2$, so using
Cauchy-Davenport, we obtain $\Sigma(U')=\Z_p$, provided
that
\[
\left\lfloor\frac{p}{20}\right\rfloor 29\geq p-1
\]
which is certainly true for $p>100$.
\end{proof}

\section{Algorithms to check $B(n)$}
\label{sect:algo}

We now describe the algorithm used to prove Theorem~\ref{thm:MaxComp}.
All statements except (\ref{mc:2max}) use the same algorithm, described in
the first subsection. Statement (\ref{mc:2max}) is different:
it concerns arbitrarily large primes, and a priori the problem is
not finite. We will describe our approach in the second subsection.

\subsection{Algorithm for $n$ fixed}

In this subsection we work in $\Z_n$ for $n$ not necessarily prime
(because of the cases $8$, $9$ and $10$).

We will need the following lemma:

\begin{Lem}\label{Lem:algoHelp}
Suppose $A \subset \Z_n^2$ contains
$\{(1,0)^m, (x_1,y)^k, (x_2,y)^k\}$ where
$|x_1 - x_2| \le m + 1$, $p - k\cdot|x_1 - x_2| \le m+1$
and $|A| \ge 2k + m + n - 1$. Then $A$ contains a zero-sum.
\end{Lem}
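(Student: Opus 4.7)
The plan is a covering-plus-reduction argument. Set $R := A \setminus \{(1,0)^m, (x_1,y)^k, (x_2,y)^k\}$, so $|R| \ge n-1$, and let $d := |x_1-x_2|$; after relabelling $(x_1,x_2)$ we may assume $x_1 = x_2 + d$. For $a \in [0,k]$ and $j \in [0,m]$, the sum of $a$ copies of $(x_1,y)$, $k-a$ copies of $(x_2,y)$ and $j$ copies of $(1,0)$ equals $(kx_2 + ad + j,\,ky)$. As $a$ and $j$ vary, the first coordinate fills the set $\{kx_2 + ad + j : 0\le a \le k,\ 0\le j\le m\}$; since $d \le m+1$, the blocks $kx_2 + ad + [0,m]$ overlap for successive $a$'s, so this set is the contiguous integer interval $[kx_2,\ kx_2 + kd + m]$. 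Its length is at least $n$ by the hypothesis $n-kd \le m+1$, so every element of $\Z_n \times \{ky\}$ is achievable as a partial sum of the prescribed block.

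It therefore suffices to find $T \subseteq R$ with $\pi_2(\sum T) \equiv -ky \pmod n$: matching the first coordinate via the covering step then produces a zero-sum in $A$. Consider the multiset $\pi_2(R) \subseteq \Z_n$ of size $\ge n-1$. The proof of Lemma~\ref{Lem:Onedim}(1) carries over to $\Z_n$ without change (its only group-theoretic input is that $-x \in \langle x \rangle$, which holds in every abelian group). Hence either $\pi_2(R)$ is zero-sum free, so $|\Sigma(\pi_2(R))| \ge n-1$ and consequently $\Sigma(\pi_2(R)) \cup \{0\} = \Z_n$ contains $-ky$---with $T = \emptyset$ in the degenerate case $-ky \equiv 0$, the non-empty zero-sum then being formed from $k$ special elements and suitably many copies of $(1,0)$ alone---or $\pi_2(R)$ admits a non-empty zero-sum $T^*$ with $\sum T^* = (c^*,0)$, in which case $T^*$ itself is a zero-sum of $A$ whenever $c^* \equiv 0$.

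The main obstacle is the residual scenario $c^* \not\equiv 0$ together with $-ky \notin \Sigma(\pi_2(R))$. I expect to close this by a dichotomy: either $-c^* \bmod n \le m$, in which case $T^* \cup \{(1,0)^{-c^* \bmod n}\}$ is already a zero-sum of $A$; or the failure $-ky \notin \Sigma(\pi_2(R))$ forces $\pi_2(R)$ to concentrate on a proper subgroup of $\Z_n$, and one reapplies the reduction step after using Lemma~\ref{Lem:replace}(1) to substitute $T^*$ by the single element $(c^*,0)$ without enlarging the sumset. The delicate technical point---and precisely where the exact hypothesis $|A| \ge 2k+m+n-1$ is used---is to ensure that this iteration terminates before the size of the residual remainder drops below the threshold needed for a final application of Lemma~\ref{Lem:Onedim}(1).
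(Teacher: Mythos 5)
Your covering step is fine and coincides with the paper's first sentence. The gap is in the second half, and you have in fact located it yourself: the case in which $\pi_2(R)$ is not zero-sum free, the resulting zero-sum $T^*$ has sum $(c^*,0)$ with $-c^*\bmod n>m$, and $-ky\notin\Sigma(\pi_2(R))$, is never closed. What you offer there is a hope, not an argument: the claim that the failure of $-ky\in\Sigma(\pi_2(R))$ ``forces $\pi_2(R)$ to concentrate on a proper subgroup'' is simply false (in $\Z_5$ the multiset $\{0,0,1,1\}$ generates the group, yet its subset sums are $\{0,1,2\}$ and miss $4$), and the proposed iteration -- replace $T^*$ by the single element $(c^*,0)$ and start over -- shrinks the remainder by $|T^*|-1$ elements at each step (or makes no progress at all when $|T^*|=1$), so the hypothesis $|R|\ge n-1$ on which your first dichotomy rests is destroyed and no termination or counting argument is given. (The substitution itself is harmless and does not even need Lemma~\ref{Lem:replace}; the bookkeeping after it is the problem.) So the proposal is not a proof.

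Moreover, this residual case is not a technicality that more care will dissolve: under the literal reading of the hypotheses (``$A$ contains $\{(1,0)^m,\dots\}$'', so that $R$ may still contain further copies of $(1,0)$) the statement fails exactly there. Take $A=\{(1,0)^4,(0,1),(2,1),(3,1),(4,1)\}\subset\Z_5^2$, which is zero-sum free, and $m=2$, $k=1$, $(x_1,y)=(0,1)$, $(x_2,y)=(2,1)$: then $|x_1-x_2|=2\le m+1$, $n-k|x_1-x_2|=3\le m+1$ and $|A|=8\ge 2k+m+n-1$, while $\pi_2(R)=\{0,0,1,1\}$ does not represent $-ky=-1$. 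Hence any correct argument must use that $m$ and $k$ are the actual multiplicities, so that $R$ contains no further copies of $(1,0)$, $(x_1,y)$, $(x_2,y)$ -- which is how the lemma is applied in the algorithm. For comparison, the paper's proof consists of the covering statement you established, followed directly by the assertion that $|R|\ge n-1$ already yields a subset of $R$ whose $\pi_2$-sum is $-ky$; in other words, the step at which you got stuck is precisely the one the paper treats as immediate, and your zero-sum-free branch via the $\Z_n$ version of Lemma~\ref{Lem:Onedim}(1) (which does generalize, as you say) is the only part of that assertion that your write-up actually justifies.
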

\begin{proof}
By the two prerequisites concerning $|x_1 - x_2|$, any interval
$[a, a+m] \subset \Z_n$ contains an element of the form
$\ell \cdot x_1 + (k-\ell)\cdot x_2$ with $0 \le \ell \le k$; thus
$\Sigma\{(1,0)^m, (x_1,y)^k, (x_2,y)^k\}$ contains the whole
coset $\Z_n \times \{ky\}$.
By the last prerequisite, we can find a subset
of $A \setminus \{(1,0)^m, (x_1,y)^k, (x_2,y)^k\}$
whose sum $s$ satisfies $\pi_2(s) = -ky$; this yields a zero-sum.
\end{proof}

The algorithm to check property B in principle just tries every possible
multiset $A \subset \Z_{n}^{2}$ consisting of $2n - 2$ elements and
having maximal multiplicity at most $n - 3$
(and which, for statement $(\ref{mc:3max})$, satisfies the additional
condition concerning the three maximal multiplicities); however, we
need some good methods to reduce the computation time.
There are several such methods which only work when $p$ is prime;
as the non-prime cases we are interested in are relatively small,
this is not such a problem.

Let us first suppose that $n$ is prime.
Then we may fix that the two elements with maximal multiplicities
$m_1 \ge m_2$ are
$a_{1} = (1,0)$ and $a_{2} = (0,1)$.
The algorithm has two outer loops to try all possible 
values $m_{1}$ and $m_{2}$ and then
recursively adds other elements with smaller multiplicities.
This is done in the order of decreasing multiplicity, as elements with
higher multiplicity tend to yield contradictions more quickly.

During the computation, we always keep an up-do-date copy of the sumset
$\Sigma A$. Moreover, for each element $z \in \Z^2_n$ which is not yet
contained in
$A$, we store an upper bound for the multiplicity
$z$ can have in $A$. These bounds are updated each time a new element $a$
is added to $A$:
\begin{itemize}
\item
No negative of any existing subset sum may be added anymore.
(The corresponding upper bounds are set to zero).
\item
No other element of the subgroup $\langle a \rangle$ may be added
anymore by Lemma~\ref{Lem:Gaop}.
\item
Applying Lemma~\ref{Lem:algoHelp} with $(x_1,y) = a$ yields upper bounds
for the multiplicity of several elements of the form $(x_2,y)$.
\end{itemize}
Using these upper bounds, after each addition of an element
we try to estimate whether there is still enough room for all remaining elements
to be added (and stop if this is not the case).
If we are adding elements with multiplicity $k$ right now,
and there are $\ell$ cyclic subgroups left which are not yet completely
forbidden for new elements, then we have space left for $k\ell$ elements
at most (again using Lemma~\ref{Lem:Gaop}).

If $n$ is not prime, we can not apply Lemma~\ref{Lem:Gaop}. Moreover,
we do not know whether the two elements with maximal multiplicities $a_1$,
$a_2$ generate the group. However, we may always apply a group automorphism
such that $\pi_1(a_1) \mid n$ and $\pi_2(a_1) = 0$; moreover, if
$\pi_2(a_2) \ne 0$ we may apply a second
group automorphism, fixing $a_1$ and such that
$\pi_2(a_2) \mid n$ and $\pi_1(a_2) \in [0,\pi_2(a_2)-1]$.
Thus if $n$ is not prime, the algorithm has additional outer loops
iterating through all $a_1, a_2$ which
are possible after the application of such automorphisms.

Verifying \refmc{3max} took 5 minutes.
For (\ref{mc:all}), the total computation time
(distributed on several computers) was 2 hours for all cases up to $n = 17$,
31 hours for $n = 19$, and
196 days kindly provided by the Rechenzentrum Universit\"at Freiburg
for $n = 23$. The moreover-part ($n=8,9,10$) took 4 minutes.

\subsection{Algorithm for two large multiplicities and $n$ arbitrary}

We now turn to statement (\ref{mc:2max}) of Theorem~\ref{thm:MaxComp}.
We use notation from Section~\ref{sect:2MaxMult}: let $A \subset \Z_p^2$
be zero-sum free and of cardinality $2p-2$, let
$m_2 \le m_1 \le p-3$ be the two maximal multiplicities, and set $k_i := p - m_i$.
As we assume $m_2 \ge 2p/3$, we may apply lemmas from
Section~\ref{sect:2MaxMult};
the main ingredient to turn the problem into a finite one is our knowledge
about $A$ described in Figure~\ref{fig:BCD} (on page \pageref{fig:BCD}).

Fix $k_{1}$ and $k_{2}$ (the computer iterates through all pairs $k_1,k_2$
with $k_1+k_2 \le 14$), and
define $L$ to be the area marked with $B$, $C$ and $D$ in the figure, but
turned into a subset of $\Z^{2}$ in such a way that $L$ is independent of $p$:
\[
\begin{aligned}
L := \,&[1, k_1- 2] \times [1, k_2-2] \,\cup\\
&
\big([-k_1+ 2,-1] \times [1, k_2-2] \cap \{(x,y)  \mid x+y \le 1 \}
\big)\,\cup\\
&
\big([1, k_1-2] \times [-k_2+2,-1] \cap \{(x,y)  \mid x+y \le 1 \}
\big)
\end{aligned}
\]

The computer recursively considers every subset $A' \subset L$ of
cardinality $\ell := k_1+k_2-2$. To know whether $A = A' \cup \{(1,0)^{m_{1}} \cup
(0,1)^{m_{2}}\}$ has a zero-sum in $\Z_{p}^{2}$,
it has to check whether
$A'$ has a subset with sum $s$ such that $\pi_1(s) \in [k_1, p] \mod p$
and $\pi_2(s) \in [k_2, p] \mod p$. So each $s \in \Sigma(A')$ yields a condition on $p$,
and the question is whether all these conditions together exclude all
$p$. For $p$ sufficiently large, whether or not such a condition holds true does not depend
on $p$ anymore. Indeed, for any $s \in \Sigma(A')$ we have
$|\pi_i(s)| \le \ell\cdot(k_i-2)$, so
if $p - k_i \ge \ell\cdot(k_i-2)$, then
$\pi_i(s) \notin [k_i, p] \mod p$ implies $\pi_i(s) \in [1,k_i-1]$;
this is independent of $p$. This means that it suffices to consider values for $p$
only up to $\max(k_1 + \ell\cdot(k_1-2), k_2 + \ell\cdot(k_2-2))$;
in this way, the problem becomes finite.
(However, the case $p = \max(k_1 + \ell\cdot(k_1-2), k_2 + \ell\cdot(k_2-2))$
has to be checked even if this is not prime.)

Some efficiency improvements which we apply:
\begin{itemize}
\item
While we build $A'$ recursively, we maintain a list of possible values
for $p$. When we add a new element $a$ to $A'$, we go through all subset sums $s$
of $A'$ containing $a$ and update this list
accordingly. As soon as it is empty, we stop considering that case.
\item
We add elements $a$ to $A'$ in the order of decreasing
$|\pi_1(a)| + |\pi_2(a)|$. Elements where this value is high are likely
to yield a contradiction quickly, so we prefer to eliminate them
right at the beginning (instead of having to try to add each of them
to every almost completed set $A'$ which we get during our computation).
\end{itemize}

The running time was 10 seconds.

\end{document}